\newtheorem{theorem}{Theorem}[section]
\newtheorem{corollary}[theorem]{Corollary}
\newtheorem{proposition}[theorem]{Proposition}
\newtheorem{lemma}[theorem]{Lemma}
\newtheorem{conjecture}[theorem]{Conjecture}
\theoremstyle{definition}
\newtheorem{example}[theorem]{Example}
\newtheorem{remark}[theorem]{Remark}
\theoremstyle{property}
\def\={\;=\;}
\def\+{\,+\,}
\DeclareFontFamily{OT1}{rsfs}{}
\DeclareFontShape{OT1}{rsfs}{n}{it}{<-> rsfs10}{}
\DeclareMathAlphabet{\curly}{OT1}{rsfs}{n}{it}
\renewcommand\k{\mathsf k}
\newcommand\Hk{H_{\mathsf k}}
\newcommand\sfK{\mathsf K}
\newcommand\LL{\mathbb L}
\renewcommand\O{\mathcal O}
\newcommand\PP{\mathbb P}
\newcommand\cP{\mathcal P}
\newcommand\cW{\mathcal W}
\newcommand\mdot{{\scriptscriptstyle\bullet}}
\newcommand\n{\mathbf{n}}
\newcommand\pt{\mathrm{pt}}
\newcommand\vir{\mathrm{vir}}
\newcommand\ver{\mathrm{ver}}
\newcommand\AJ{\mathrm{AJ}}
\newcommand\td{\mathrm{td}}
\newcommand\lam{\pmb{\lambda}}
\newcommand\Xb{\,\overline{\!X}}
\newcommand\C{\mathbb C}
\newcommand\cC{\mathcal C}
\newcommand\FF{\mathbb F}
\newcommand\II{\mathbb I}
\newcommand\sfZ{\mathsf Z}
\newcommand\Q{\mathbb Q}
\newcommand\cQ{\mathcal Q}
\newcommand\Z{\mathbb Z}
\newcommand\cZ{\mathcal Z}
\renewcommand\t{\mathfrak t}
\newcommand{\rt}[1]{\stackrel{#1\,}{\rightarrow}}
\newcommand{\Rt}[1]{\stackrel{#1\,}{\longrightarrow}}
\newcommand\To{\longrightarrow}
\newcommand\into{\hookrightarrow}
\newcommand\Into{\ensuremath{\lhook\joinrel\relbar\joinrel\rightarrow}}
\newcommand\INTO{\ar@{^{(}->}[r]}
\newcommand\Mapsto{\ensuremath{\shortmid\joinrel\relbar\joinrel\rightarrow}}
\newcommand\ip{\,\lrcorner}
\renewcommand\_{^{}_}
\newcommand\bull{{\scriptscriptstyle\bullet}}
\newcommand\udot{^\bull}
\newcommand\rk{\operatorname{rk}}
\newcommand\tr{\operatorname{tr}}
\newcommand\ch{\operatorname{ch}}
\newcommand\ev{\operatorname{ev}}
\newcommand\id{\operatorname{id}}
\newcommand\Hom{\operatorname{Hom}}
\renewcommand\hom{\curly H\!om}
\newcommand\Ext{\operatorname{Ext}}
\newcommand\ext{\curly Ext}
\newcommand\At{\operatorname{At}}
\newcommand\Aut{\operatorname{Aut}}
\newcommand\Ob{\operatorname{Ob}}
\newcommand\Bl{\operatorname{Bl}}
\newcommand\Pic{\operatorname{Pic}}
\newcommand\Spec{\operatorname{Spec}\,}
\newcommand\Hilb{\operatorname{Hilb}}
\newcommand\Sym{\operatorname{Sym}}
\newcommand\beq[1]{\begin{equation}\label{#1}}
\newcommand\eeq{\end{equation}}
\newcommand\beqa{\begin{eqnarray*}}
\newcommand\eeqa{\end{eqnarray*}}
\DeclareRobustCommand{\SkipTocEntry}[4]{}
\def\l@section{\@tocline{1}{0pt}{1pc}{}{}}
\def\l@subsection{\@tocline{2}{0pt}{1pc}{4.6em}{}}
\def\l@subsubsection{\@tocline{3}{0pt}{1pc}{7.6em}{}}
\renewcommand{\tocsection}[3]{%
  \indentlabel{\@ifnotempty{#2}{\makebox[2.3em][l]{%
    \ignorespaces#1 #2.\hfill}}}#3}
\renewcommand{\tocsubsection}[3]{%
  \indentlabel{\@ifnotempty{#2}{\hspace*{2.3em}\makebox[2.3em][l]{%
    \ignorespaces#1 #2.\hfill}}}#3}
\renewcommand{\tocsubsubsection}[3]{%
  \indentlabel{\@ifnotempty{#2}{\hspace*{4.6em}\makebox[3em][l]{%
    \ignorespaces#1 #2.\hfill}}}#3}
\begin{document}
\title[Stable pairs on local surfaces I: vertical]{\ \\ \vspace{-17mm} Stable pairs with descendents on local surfaces I: the vertical component\vspace{-2mm}}
\author[M.~Kool and R.~P.~Thomas]{Martijn Kool and Richard P.~Thomas \vspace{-8mm}}
 
\maketitle

\centerline{\emph{with an Appendix by Aaron Pixton and Don Zagier}}

\begin{abstract}
We study the full stable pair theory --- with descendents --- of the Calabi-Yau 3-fold $X=K_S$, where $S$ is a surface with a smooth canonical divisor $C$.

By both $\C^*$-localisation and cosection localisation we reduce to stable pairs supported on thickenings of $C$ indexed by partitions. We show that only strict partitions contribute, and give a complete calculation for length-1 partitions. The result is a surprisingly simple closed product formula for these ``vertical" thickenings.

This gives all contributions for the curve classes $[C]$ and $2[C]$ (and those which are not an integer multiple of the canonical class). Here the result verifies, via the descendent-MNOP correspondence, a conjecture of Maulik-Pandharipande, as well as various results about the Gromov-Witten theory of $S$ and spin Hurwitz numbers.
%
\end{abstract}
\thispagestyle{empty}

\renewcommand\contentsname{\vspace{-9mm}}
\tableofcontents

\section{Introduction} \label{intro}
Let $S$ be a smooth complex projective surface, and let $X = \mathrm{Tot}(K_S)$ be the total space of its canonical bundle $K_S$ with its natural action of $T=\C^*$ on the fibres. We use the natural maps
\begin{displaymath}
\xymatrix
{
X \ar_{\pi}[r] & S. \ar@/_/_{\iota}[l]
}
\end{displaymath}
For $\beta \in H_2(S,\Z)$ and $\chi\in\Z$ we let $P_X:=P_\chi(X,\iota_* \beta)$ denote the moduli space of stable pairs $(F,s)$ on $X$ \cite{PT1} with curve class $[F]=\iota_*\beta$ and holomorphic Euler characteristic $\chi(F)=\chi$.

The moduli space $P_X$ has a symmetric perfect obstruction theory \cite{PT1}, but is noncompact. The $T$-action induces one on $P_X$ with compact fixed point locus $P_{X}^{T}$. Therefore we can define the stable pair invariants of $X$ via $T$-equivariant virtual localisation \cite{GP}.\footnote{We emphasise that in this paper we are concerned with the full stable pair and Gromov-Witten invariants of $X$, \emph{not} their reduced cousins computed in \cite{KT1,KT2}.} See Section \ref{sectionfixedlocus} for a review of the details, and for the construction of the descendent insertions
$$
\tau_\alpha(\sigma) := \pi_{P*} \big( \pi_X^{*} \sigma \cap \ch_{\alpha+2}^{T}(\FF) \big) \in H^{*}_{T}(P_X,\Q)
$$
for $\alpha\geq0$. Here we use $\sigma$ to denote both a class in $H^*(S,\Q)$ and the corresponding class $\sigma\otimes1\in H_{T}^{*}(X,\Q) \cong H^*(S,\Q)\otimes\Q[t]$, where $t$ is the equivariant parameter. The resulting descendent invariants of $X$ live in $\Q[t,t^{-1}]$ and are defined by
\begin{equation} \label{generalinv}
P_{\chi,\beta}\big(X,\tau_{\alpha_1}(\sigma_1) \cdots \tau_{\alpha_m}(\sigma_m)\big) := \int_{[P_{\chi}(X,\beta)^{T}]^{\vir}} \frac{1}{e(N^{\vir})} \prod_{j=1}^{m} \tau_{\alpha_j}(\sigma_j)\big|\_{P_{\chi}(X,\beta)^{T}}\,.
\end{equation}
Many of these invariants vanish:

\begin{theorem} \label{main0}
If $S$ has a reduced, irreducible canonical divisor then
$$
P_{\chi,\beta}\big(X,\tau_{\alpha_1}(\sigma_1) \cdots \tau_{\alpha_m}(\sigma_m)\big)\ =\ 0
$$
unless $\beta$ is an integer multiple of the canonical class $\k$ and all $\sigma_i$ lie in $H^{\le2}(S)$.
\end{theorem}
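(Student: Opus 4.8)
The plan is to combine the two localisations advertised in the abstract. First I would apply $T$-localisation: since $T=\C^*$ acts on the fibres of $\pi\colon X\to S$ with nonzero weight, every $T$-invariant one-dimensional subscheme of $X$ is set-theoretically supported on the zero section, so every pair in $P_X^T$ has $\mathrm{supp}(F)\subseteq S$ and $F$ is pushed forward from a nilpotent thickening of $S$ inside $X$. This identifies $P_X^T$ but places no constraint on $\beta$, so a second input is needed.

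Second, I would build a cosection of the ($T$-equivariant) obstruction theory out of the canonical section. The hypothesis that $S$ has a reduced irreducible canonical divisor $C$ furnishes a nonzero $\theta\in H^0(S,K_S)$ with $\mathrm{div}(\theta)=C$. On the Calabi--Yau $X=K_S$ this $\theta$ induces a cosection $\mathcal{O}b\to\O$ of the obstruction sheaf whose degeneracy locus --- where it fails to be surjective --- consists exactly of those pairs whose support projects into $C=\{\theta=0\}$. By Kiem--Li cosection localisation (in its form compatible with virtual localisation) the virtual cycle entering \eqref{generalinv} is then supported in the Borel--Moore homology of this degeneracy locus $P_C\subseteq P_X^T$, i.e.\ on pairs supported on a thickening of $C\subset S\subset X$. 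Verifying that $\theta$ genuinely produces such a cosection, and pinning down its degeneracy locus, is the step I expect to be the main obstacle; everything afterwards is essentially formal.

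Granting this reduction, both conclusions follow from a dimension count on $C$. For the curve class: a pair supported on a thickening of the reduced irreducible curve $C$ has one-cycle a positive multiple of $[C]$, and $[C]=c_1(K_S)=\k$, so $\beta=n\k$; this is exactly where reducedness and irreducibility are used, since otherwise distinct components of $C$ could carry other classes. For the insertions: on $X\times P_C$ the universal object $\FF$ is supported over $\pi^{-1}(C)$, so $\ch_{\alpha+2}^{T}(\FF)$ restricted there is pulled back from that support, and by the projection formula for the closed set $\pi^{-1}(C)$ the class $\pi_X^*\sigma\cap\ch_{\alpha+2}^{T}(\FF)$ depends only on $\iota_C^*\sigma=\sigma|_C$. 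Since $C$ is a curve we have $H^{>2}(C)=0$, whence $\sigma|_C=0$ whenever $\sigma\in H^{>2}(S)$; the delicate point --- which I would handle by phrasing the descendent as an operator on $H_*(P_C)$ factoring through $H^*(C)$, so that it acts on the cosection-localised cycle rather than on $H_*(P_X)$ --- is precisely that this capping must be carried out against a cycle living only in the homology of $P_C$. Each such descendent $\tau_\alpha(\sigma)$ therefore restricts to zero on $P_C$ and kills the integrand, giving the stated vanishing unless $\beta\in\Z\k$ and all $\sigma_i\in H^{\le2}(S)$.
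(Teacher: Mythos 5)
Your overall route is the paper's own: $T$-localisation, then a cosection of the $T$-fixed obstruction sheaf built from the canonical form $\theta$, Kiem--Li localisation of $[P_X^T]^{\vir}$ to the degeneracy locus, and the two vanishings read off from the fact that everything in that locus is supported over $C$. Your endgame coincides with the paper's Corollary \ref{hot}: the curve-class deduction (every component of a compact curve lying over the reduced irreducible $C$ must dominate $C$, as the fibres of $K_S$ contain no compact curves) is sound, and your insertion argument via $\sigma|_C\in H^{\ge3}(C)=0$ is a harmless cohomological rephrasing of the paper's choice of a representing cycle $\gamma\in H_{\le1}(S)$ disjoint from $C$; the point you flag --- that the insertion need only die after restriction to the cosection-localised cycle --- is handled exactly as you suggest.

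The genuine gap is the step you explicitly defer as ``the main obstacle'': constructing the cosection and identifying its degeneracy locus is not a routine verification but the entire mathematical content of the theorem, occupying all of Section \ref{cosecsec} of the paper. Concretely, the paper (i) compactifies $X\subset\Xb=\PP(K_S\oplus\O_S)$ so that relative Serre duality and the trace pairing convert the vector field $V_\theta=v_\theta\ip\At(\II\udot)$ into the pairing \eqref{Serre}, whose $T$-fixed part over $P_X^T$ is the cosection \eqref{cosec}; (ii) proves in Lemma \ref{20120007231089} --- where one must dodge the failure of base change for $\ext^1_{\pi_P}$ --- that the zero scheme of this cosection is the largest subscheme $Z$ on which the restricted vector field $V_{\theta,Z}$ vanishes; and (iii), the crux (Proposition \ref{deglocinside...}), shows that on such a $Z$ the first-order deformation obtained by translating the universal pair by $tv_\theta$ is trivial, forcing the $T$-invariant support ideal $\pi^*I_0+\pi^*I_1\cdot s+\cdots+(s^k)$ to satisfy $\theta\cdot I_i\subseteq I_{i-1}$ and $\theta\in I_{k-1}$; only then do reducedness, irreducibility and purity force $I_i=(\theta^{\mu_i})$, so the support is a thickening $\lam C$ and $\beta=d[C]$. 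None of this is supplied or sketched in your proposal, so the theorem has been reduced to an unproved assertion. Note also that your description of the degeneracy locus as consisting \emph{exactly} of the pairs whose support projects into $C$ is false: the zero locus is strictly smaller, containing only thickenings by \emph{strict} partitions (Proposition \ref{deglocinside...}, Theorem \ref{stric}); for instance pairs supported on the $(3,3,1)$-thickening of Example \ref{vtr} lie over $C$ but not in $Z(\sigma_\theta)$. Fortunately only the inclusion of $Z(\sigma_\theta)$ in the locus of pairs supported over $C$ is needed for your deductions, and that inclusion is true.
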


More generally one can localise the calculation of $P_{\chi,\beta}\big(X,\tau_{\alpha_1}(\sigma_1) \cdots \tau_{\alpha_m}(\sigma_m)\big)$ to (thickenings of) a canonical divisor $C$. In the context of Seiberg-Witten and Gromov-Witten theory on $S$ this goes back to ideas of Witten, Taubes and Lee-Parker \cite{LP}, formalised in algebraic geometry as Kiem-Li's cosection localisation \cite{KL1,KL2,KL3}.

So from now on we consider only $S$ with \emph{a smooth connected canonical divisor}\footnote{In fact all we require, by the deformation invariance of stable pair and Gromov-Witten invariants, is that some deformation of $S$ should have such a divisor.} $C$. Because of Theorem \ref{main0} we need only work with curve classes $\beta=d\k,\ d\in\Z_{>0}$, which are integer multiples of the canonical class.
We use cosection localisation to further localise the $T$-fixed moduli space $P_X^T$ to thickenings of $C$ indexed by partitions $\lam=(\lambda_0,\lambda_1,\ldots,\lambda_{l-1})$ with $\lambda_0\ge\ldots\ge\lambda_{l-1}>0$ and $|\lam|=\sum\lambda_i=d$. The components\footnote{By convention a component means a union of connected components.} $P_{\lam C}^T$ of the localised moduli space parameterise stable pairs with support $\lam C$ defined by the ideal sheaf
$$
\O(-\lambda_0S)+I_C(-\lambda_1S)+I_C^2(-\lambda_2S)+\ldots+I_C^{l-1}(-\lambda_{l-1}S)+I_C^l.
$$
Here $\O_X(-S)$ is the ideal of the zero section of $K_S$, and $I_C=\pi^*\O(-C)$ is the ideal sheaf of $\pi^*C$.

\begin{example} The partition $\lam=(4,2,1)$ corresponds to the thickening $\lam C$ which, transverse to $C$, looks like \smallskip
\begin{displaymath}
\xy
(0,0)*{}; (0,20)*{} **\dir{-} ; (5,0)*{} ; (5,20)*{} **\dir{-} ; (10,0)*{} ; (10,10)*{} **\dir{-} ; (15,0)*{} ; (15,5)*{} **\dir{-} ;  (0,0)*{} ; (15,0)*{} **\dir{-} ; (0,5)*{} ; (15,5)*{} **\dir{-} ; (0,10)*{} ; (10,10)*{} **\dir{-} ; (0,15)*{} ; (5,15)*{} **\dir{-} ; (0,20)*{} ; (5,20)*{} **\dir{-}; (8,-4)*{\lambda_0\;\lambda_1\;\lambda_2}; ; (42,0)*{\longrightarrow \mathrm{surface}} ; (30,8)*{\rotatebox{90}{$\longrightarrow \mathrm{fibre}$}}
\endxy
\end{displaymath}
\end{example}

\noindent In fact only \emph{strict} partitions ($\lambda_0>\ldots>\lambda_l$) like this one contribute.

\begin{theorem} \label{stric}
The integrals \eqref{generalinv} can be localised to integrals over the moduli spaces $P_{\lam C}^T\subset P_{\chi}(X,d\k)$ with $\lam\vdash d$ a {\bf strict} partition of $d=|\lam|$.
\end{theorem}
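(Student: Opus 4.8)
The plan is to prove that for every \emph{non-strict} partition $\lam$ the contribution of $P_{\lam C}^T$ to \eqref{generalinv} vanishes, leaving only the strict partitions. After the two localisations the integral is a sum over $\lam\vdash d$ of terms $\int_{[P_{\lam C}^T]^{\vir}}\frac{1}{e(N^{\vir})}\prod_j\tau_{\alpha_j}(\sigma_j)$, so it is enough to show that the $T$-fixed virtual class $[P_{\lam C}^T]^{\vir}$ itself vanishes whenever $\lam$ is not strict. I would obtain this from a \emph{second} use of cosection localisation \cite{KL1,KL2,KL3}: the aim is to produce an everywhere-surjective cosection of the obstruction theory of $P_{\lam C}^T$, whose zero locus is then empty, forcing the class to vanish.

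The geometric input is adjunction on the smooth canonical divisor: $N_{C/S}\cong K_S|_C$ with $(K_S|_C)^{\otimes2}\cong K_C$, so $K_S|_C$ is a theta characteristic on $C$, and the fibre direction of $X=K_S$ restricts over $C$ to this same line bundle, carrying $T$-weight $1$. In the local model $\lam C\subset X$ is the product of $C$ with the length-$|\lam|$ monomial subscheme of the transverse $(x,y)$-plane, where $x$ is the $C$-normal direction in $S$ (weight $0$) and $y$ the fibre (weight $1$). Computing the $T$-fixed (weight-$0$) part of $\Ext^{\bull}_X(I^{\bull},I^{\bull})$ for a pair on $\lam C$, the pieces are governed by $H^{\bull}\!\big(C,(K_S|_C)^{\otimes m}\big)$, where the exponent $m$ attached to the top box of the $i$-th column is $1+\#\{i'>i:\lambda_{i'}=\lambda_i\}$. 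Strictness is exactly the condition that only $m=1$ occurs --- the theta-characteristic (``spin'') contributions producing the genuine answer. A repeated part $\lambda_i=\lambda_{i+1}$, however, always forces a column with exactly one equal-height column to its right, hence a summand with $m=2$, i.e. a copy of $H^{\bull}(C,K_C)$. Since Serre duality gives a canonical isomorphism $H^1(C,K_C)\cong\C$, this is a trivial $\O$-summand of the fixed obstruction sheaf, equivalently an everywhere-surjective cosection $\mathrm{Ob}\twoheadrightarrow\O$, and Kiem--Li then yields $[P_{\lam C}^T]^{\vir}=0$.

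The main obstacle is making the schematic ``$m=2$ summand'' honest: one must exhibit this canonical trivial piece inside $\Ext^2_X(I^{\bull},I^{\bull})_0$ for genuine stable pairs (not merely for the monomial ideal of the local model), controlling how the global bundle $K_S|_C$ enters and how the summands vary as the pair moves in $P_{\lam C}^T$, and then check that the induced cosection is surjective at \emph{every} point so that its vanishing locus is empty. Care is also needed to track the $T$-weight of the Calabi--Yau form, which governs the symmetry $\Ext^2\cong(\Ext^1)^\vee$ and hence the precise matching of fixed obstructions with fixed deformations, and to verify that this fixed-direction cosection is compatible with the ambient cosection already used to reduce to thickenings of $C$. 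Once the trivial summand is pinned down, the vanishing follows immediately from \cite{KL1,KL2,KL3}.
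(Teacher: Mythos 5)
There is a genuine structural gap: your proof presupposes exactly the hard part of the statement. You start from ``after the two localisations the integral is a sum over $\lam\vdash d$ of terms $\int_{[P_{\lam C}^T]^{\vir}}\frac{1}{e(N^{\vir})}\prod_j\tau_{\alpha_j}(\sigma_j)$'', but no such decomposition exists before the theorem is proved. The loci $P_{\lam C}^T$ are \emph{not} unions of connected components of $P_X^T$: pairs supported on thickenings of $C$ deform inside $P_X^T$ as $C$ moves in $\Hk$ (this is exactly how the neighbourhood $P_U$ in the proof of Corollary~\ref{virtualcycle} arises), and $P_X^T$ also contains horizontal components and pairs supported on completely different $T$-invariant curves in class $d\k$. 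Hence Graber--Pandharipande localisation does not assign $P_{\lam C}^T$ a virtual class, and there is no separately defined ``contribution of $P_{\lam C}^T$'' for a non-strict $\lam$ that you could then kill. The only mechanism that produces a class supported near thickenings of $C$ is cosection localisation, and in the paper this is done in a single stroke: the cosection $\sigma_\theta$ of Section~\ref{cosecsec}, induced by translation by $\theta$ up the fibres, supports the Kiem--Li class on $Z(\sigma_\theta)$, and Proposition~\ref{deglocinside...} shows $Z(\sigma_\theta)$ is contained in $\bigsqcup_{\lam\vdash d\ \mathrm{strict}}P_{\lam C}^T$ --- simultaneously ruling out classes $\beta\neq d[C]$, other supports, and non-strict partitions. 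There is no intermediate stage at which all partitions appear, so there is also no stage at which your proposed ``second'' cosection could be invoked: Kiem--Li vanishing requires a surjective cosection of the obstruction sheaf of a space carrying a perfect obstruction theory, and ``$[P_{\lam C}^T]^{\vir}$'' for non-strict $\lam$ is not such an object.

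The salvageable core of your idea is real, but it is the \emph{dual} of what is needed, and the paper obtains it much more cheaply. Your observation that a repeated part produces a $(K_S|_C)^{\otimes2}\cong K_C$ piece with $H^1(C,K_C)\cong\C$ is essentially the fixed-locus shadow of the paper's mechanism: at a pair supported on $\lam C$ with $\lam$ non-strict, the translation vector field $V_\theta$ is a \emph{nonzero} class in $\Ext^1$, and pairing against it (via relative Serre duality, as in Lemma~\ref{20120007231089}) gives a nonzero, hence surjective, functional on the fixed obstruction space --- i.e.\ the \emph{existing} cosection $\sigma_\theta$ is already nowhere zero on such loci. The paper proves this not by a local-model computation of $\Ext^*_X(I\udot,I\udot)$ --- which, as you yourself note, would require globalising the summand over the moduli space and over the cokernel (``free points'') data of the pairs --- but by an elementary first-order argument on ideals: if $\sigma_\theta$ vanishes at a pair, the support ideal $\pi^*I_0+\pi^*I_1.s+\cdots+(s^k)$ must be preserved under $s\mapsto s-t\pi^*\theta$ modulo $t^2$, forcing $I_i=(\theta^{\mu_i})$ with $\mu_{i-1}\le\mu_i+1$, which is precisely strictness of the transposed partition. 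If you want to rescue your route, the correct reformulation is to prove that $\sigma_\theta$ (equivalently $V_\theta$) is nowhere zero on $P_{\lam C}^T$ for non-strict $\lam$, rather than to manufacture a new cosection and apply Kiem--Li to a virtual class that is never defined.
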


We form the generating function
\begin{equation} \label{ZPfull}
\sfZ_{d\k}^P(X,\tau_{\alpha_1}(\sigma_1) \cdots \tau_{\alpha_m}(\sigma_m))\,:=\,
\sum_{\chi\in\Z}P_{\chi,d\k}\big(X,\tau_{\alpha_1}(\sigma_1) \cdots \tau_{\alpha_m}(\sigma_m)\big) q^\chi
\end{equation}
in $\Q[t,t^{-1}](\!(q)\!)$, and let 
\begin{equation} \label{ZPver}
\sfZ_{d\k}^P(X,\tau_{\alpha_1}(\sigma_1) \cdots \tau_{\alpha_m}(\sigma_m))_{\ver}
\end{equation}
denote the contribution from length-1 partitions $\lam=(d)$ in Theorem \ref{stric}. This is a generating series of integrals over moduli spaces of stable pairs whose support has ideal $I_C(-dS)$. In particular they are contained in $\pi^{-1}(C)$ and we call them ``vertical". The main result of this paper is an algorithm for the computation of \eqref{ZPver} (Remark \ref{generalinsertions}) and a closed formula when all the insertions $\sigma_i$ are $H^2$ classes. We let $D_i$ denote their Poincar\'e dual classes (these are \emph{any} $H_2(S,\Q)$ classes, not necessarily divisors).

\begin{theorem} \label{main}
Suppose that $S$ has a smooth irreducible canonical divisor of genus $h=\k^2+1$. Set $Q:=-q$ and $|\alpha|:=\alpha_1+\ldots+\alpha_m$. Without descendents, $\sfZ_{d\k}^P(X)_{\ver}$ equals
$$
(-1)^{\chi(\O_S)} \Bigg(\!\frac{(-1)^d}{d^{d-1}}\!\Bigg)^{\!\!h-1}\!\!
\big(Q^{\frac d2}-Q^{-\frac d2}\big)^{2h-2}\
\prod_{i=1}^{d-1}\Big(\!(d-i)Q^{\frac d2}-dQ^{\frac d2 - i}+iQ^{-\frac d2}\Big)^{\!h-1}. $$
Adding descendents, $\sfZ_{d\k}^P(X,\tau_{\alpha_1}(D_1) \cdots \tau_{\alpha_m}(D_m))_{\ver}$ equals
$$
\sfZ_{d\k}^P(X)_{\ver\,}(dt)^{|\alpha|} \prod_{j=1}^{m}\frac{(d\k \cdot\! D_j)}{(\alpha_j +1)!}\ \frac{Q^{\frac d2(\alpha_j+1)} - Q^{-\frac d2(\alpha_j+1)}}{\big(Q^{\frac d2} - Q^{-\frac d2}\big)^{\alpha_j+1}}\,.
$$
\end{theorem}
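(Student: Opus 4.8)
The plan is to reduce the vertical contribution entirely to the geometry of the smooth curve $C$ and its theta characteristic $N:=K_S|_C$, which satisfies $N^{\otimes2}=K_C$ and $\deg N=h-1$. First I would describe the fixed locus $P^T_{(d)C}$ concretely. A vertical pair is supported on the length-$d$ thickening $C_d$ of $C$ in the fibre direction of $K_S$, whose structure sheaf is $\bigoplus_{i=0}^{d-1}(N^\vee)^{\otimes i}$ once the fibre coordinate $x$ is recorded as a section of $N^\vee$. Decomposing a $\C^*$-fixed pair $(F,s)$ into weight spaces $F=\bigoplus_{i\ge0}F_i$ with $s\in H^0(F_0)$ and the fibre coordinate inducing twisted maps $F_i\to F_{i+1}$, purity and stability force each $F_i$ to be a line bundle on $C$ twisted by an effective divisor, the divisors nested along the chain. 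This identifies each component of $P^T_{(d)C}$ with a product of symmetric powers $C^{(n_0)}\times\cdots\times C^{(n_{d-1})}$, where $n_i$ is the length of the cokernel at level $i$ and $\chi$ fixes $\sum_i n_i$; summing over $\chi$ then turns $\sfZ^P_{d\k}(X)_{\ver}$ into a product over the $d$ levels of a $q$-series of integrals over symmetric powers of $C$.

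Next I would pin down the virtual normal bundle. Transverse to $C$ the space $X$ is the local curve $N\oplus N$ (the normal direction in $S$ and the fibre of $K_S$, both isomorphic to $N$, with only the latter carrying the $\C^*$-weight); since $\det(N\oplus N)=N^{\otimes2}=K_C$ this is Calabi-Yau, so the symmetric obstruction theory is governed by $N$ alone. Restricting the stable-pairs deformation-obstruction complex to a component $C^{(n_0)}\times\cdots\times C^{(n_{d-1})}$ and tracking the $\C^*$-weights of $x$ through the chain, I would express $e(N^{\vir})^{-1}$ as a product over the levels of tautological classes on the symmetric powers, the level-$i$ factor carrying the fibre weights that assemble into the term $(d-i)Q^{\frac d2}-dQ^{\frac d2-i}+iQ^{-\frac d2}$. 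The fact that $N$ is a square root of $K_C$ of degree $h-1$ is the source both of the global exponent $h-1$ and, through $h^0(N)-h^1(N)$ and Serre duality on $C$, of the sign $(-1)^{\chi(\O_S)}$.

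I would then evaluate the integrals. Each level contributes a series $\sum_n\big(\int_{C^{(n)}}(\text{tautological class})\big)q^n$ which, by the standard calculus of tautological integrals over symmetric products of a curve, collapses to an explicit rational function of $q$ raised to the power $h-1$ (the power being $h-1$ because the bundles involved have degree a multiple of $\deg N=h-1$). Substituting $Q=-q$ converts the weight-$0$ part, which contains the section and behaves like the symmetric products of the whole of $C$, into the leading factor $(Q^{\frac d2}-Q^{-\frac d2})^{2h-2}$, and converts the $d-1$ higher levels into $\prod_{i=1}^{d-1}\big((d-i)Q^{\frac d2}-dQ^{\frac d2-i}+iQ^{-\frac d2}\big)^{h-1}$; the prefactor $\big((-1)^d/d^{d-1}\big)^{h-1}$ reflects the normalisation coming from the $d$ level-maps $x$ and their determinants. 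I expect the exact assembly and simplification of this product — in particular matching the precise shape of the $d-1$ level-factors — to be the main obstacle, and this is presumably where the generating-function identity of the Pixton-Zagier appendix is needed.

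Finally, for descendents I would restrict $\tau_\alpha(D)=\pi_{P*}\big(\pi_X^*D\cap\ch^T_{\alpha+2}(\FF)\big)$ to the vertical fixed locus. There $\pi_X^*D$ contributes only through the intersection number $d\k\cdot D$ of $D$ with the curve class, while $\ch^T_{\alpha+2}(\FF)$ becomes an explicit sum of fibre weights (powers of $t$) over the $d$ levels of the thickening. Since this factor is pulled back from the level structure and is independent of the moving points in $C^{(n)}$, each insertion simply multiplies the no-descendent series by a universal function; summing the weighted series over the levels and dividing by $(\alpha_j+1)!$ produces exactly the stated ratio $\frac{Q^{\frac d2(\alpha_j+1)}-Q^{-\frac d2(\alpha_j+1)}}{\big(Q^{\frac d2}-Q^{-\frac d2}\big)^{\alpha_j+1}}$, with the overall equivariant weights collected into $(dt)^{|\alpha|}$. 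This step should be routine once the structure above is in place.
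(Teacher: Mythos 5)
Your outline follows the paper's skeleton at the start (weight-space decomposition of $T$-fixed pairs, identification of the vertical fixed locus with symmetric-power-type moduli of divisors on $C$, reduction to tautological integrals), but two of its load-bearing steps are gaps, not routine verifications. First, you never say which cycle your integrals are taken against. The vertical contribution is not $\sum_\chi q^\chi\int e(N^{\vir})^{-1}(\cdots)$ over the fundamental class of each component: the components are \emph{nested} Hilbert schemes $C^{[\n]}\cong C^{[n_0]}\times C^{[\delta_1]}\times\cdots\times C^{[\delta_{d-1}]}$ (with $\delta_i=n_i-n_{i-1}$, not $C^{(n_0)}\times\cdots\times C^{(n_{d-1})}$), and the $T$-fixed obstruction theory on them has a nontrivial obstruction bundle. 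One must first produce the cosection (Kiem--Li) localised virtual class; the paper needs the Chang--Kiem result that $\Hilb_{\k}(S)$ is smooth at $C$ with $\dim\equiv\chi(\O_S)\pmod 2$, plus an excess-intersection computation, to show this class equals $(-1)^{\chi(\O_S)+n_{d-1}-n_0}\prod_{i=1}^{d-1}\binom{\k^2}{\delta_i}$ times $[C^{[n_0]}]\times[\pt]\times\cdots\times[\pt]$. This is the sole source of the sign $(-1)^{\chi(\O_S)}$ and of the binomial weights that drive the whole computation; your proposed mechanism ($h^0(N)-h^1(N)$ and Serre duality on $C$) produces neither, and without this step all subsequent integrals are against the wrong cycle.

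Second, the structural assumption that summing over $\chi$ makes $\sfZ^P_{d\k}(X)_{\ver}$ factor into a product over the $d$ levels of independent $q$-series, with the $i$-th level directly contributing $(d-i)Q^{d/2}-dQ^{d/2-i}+iQ^{-d/2}$, is precisely what fails. The levels are coupled: the virtual normal bundle involves complexes like $E=R\pi_*\big[K_S|_C^{-(d-1)}(\cZ_0+\Delta_{0,d-1})\big]$ mixing $\cZ_0$ with all the $\Delta_j$, the power of $q$ is $\sum_i n_i$, and the obstruction weights $\binom{\k^2}{\delta_i}$ tie the $\delta_i$ to the integral over $C^{[n_0]}$. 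The product formula emerges only after integrating on $C^{[n_0]}$ (Poincar\'e formula plus Chu--Vandermonde) and then resumming over $n_0$, all $\delta_i$, and auxiliary indices by repeated use of the binomial theorem --- a collapse the authors explicitly call mysterious, stating they could not find a direct generating-series argument of the kind you assume. Relatedly, the Pixton--Zagier identity plays no role in the proof of this theorem (only elementary binomial identities do); it is needed later, for the Gromov--Witten application through the descendent-MNOP correspondence. Finally, your descendent step is not routine in the way you claim: restricted to $C^{[n_0]}$, the insertion $\tau_\alpha(D)$ is a polynomial in the tautological class $\omega$ of the universal divisor (not a constant pulled back from the level structure), so it genuinely couples to the integral, and the clean multiplicative factor in the theorem again only appears after the final resummation and a telescoping sum.
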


\begin{remark}
We deduce that $\sfZ_{d\k}^P(X,\tau_{\alpha_1}(D_1) \cdots \tau_{\alpha_m}(D_m))_{\ver}$ is invariant under $q \leftrightarrow q^{-1}$ up to a factor $(-1)^{|\alpha|}$. In particular we get invariance under $q \leftrightarrow q^{-1}$ for primary insertions. In the cases $d=1,2$ these expressions calculate the full generating function \eqref{ZPfull}. 
\end{remark}


The MNOP correspondence \cite{MNOP,PT1} conjectures that the Gromov-Witten and stable pairs theories of $X$ determine one another.\footnote{While the original MNOP correspondence dealt with Gromov-Witten theory and DT theory, in this paper we always mean its simpler reformulation in \cite{PT1} in the language of stable pairs. This is critical when descendents are included, as the form of the conjecture for DT theory is still unknown.} This has been upgraded by Pandharipande-Pixton \cite{PP1, PP2} to a correspondence of full descendent theories. This descendent-MNOP conjecture is more complicated than the original MNOP conjecture, involving a certain inexplicit matrix $\widetilde\sfK_{\mu\nu}$. Pandharipande-Pixton have proved their conjecture in many cases, but not for the local general type surfaces of this paper. So in Sections \ref{GWX}, \ref{GWS} we \emph{assume} the descendent-MNOP correspondence and apply it to our results. Firstly this gives (see Theorem \ref{GWmain0}) the obvious vanishing result analogous to Theorem \ref{main0} for the Gromov-Witten generating function 
\begin{equation} \label{ZGWfull}
\sfZ_{\beta}^{GW}(X,\tau_{\alpha_1}(\sigma_1) \cdots \tau_{\alpha_m}(\sigma_m))\ \in\ \Q[t,t^{-1}](\!(u)\!).
\end{equation}
Next we consider the vertical contribution of Theorem \ref{main} to the stable pairs generating function for $\beta=d\k$. Pushing it through the descendent-MNOP conjecture we get a contribution to the Gromov-Witten theory which we call
\begin{equation} \label{ZGWver}
\sfZ_{d\k}^{GW}(X,\tau_{\alpha_1}(D_1) \cdots \tau_{\alpha_m}(D_m))_{\ver\,},
\end{equation} 
which is the full generating function for $d=1,2$. Its computation has several applications:
\begin{itemize}
\item In Theorem \ref{GWmain} we prove that \eqref{ZGWver} is the product of the generating function without insertions $\sfZ_{d\k}^{GW}(X)_{\ver}$ and a formal Laurent series in $u$ depending only on $d,\ d\k\!\cdot\!D_j$ and the descendence degrees $\alpha_1, \ldots, \alpha_m$. 
\item The lowest order term in $u$ of \eqref{ZGWfull} has coefficient the descendent Gromov-Witten invariant
\begin{equation} \label{surfin}
N\udot_{g,\beta}(S,\tau_{\alpha_1}(\sigma_1)\cdots\tau_{\alpha_m}(\sigma_m))
\end{equation}
of the surface $S$ in genus
\beq{jeenuss}
g\,:=\ 1 - \int_{\beta} c_1(S) - m + \sum_{j=1}^{m} \Big(\alpha_j + \frac12\deg(\sigma_j)\Big).
\eeq
Here $\sigma_j \in H^{\deg \sigma_j}(S,\Q)$, and the invariant is zero if \eqref{jeenuss} is not an integer.
This invariant \eqref{surfin} satisfies the same vanishing as its 3-fold analog (Corollary \ref{GWSvanish}). In the case of no insertions Lee-Parker \cite{LP} proved that \eqref{surfin} is equal to the degree $d$ unramified spin Hurwitz number of $C$ with theta characteristic $K_S|_C$. This result was proved algebro-geometrically by Kiem-Li \cite{KL1, KL2}.

The spin Hurwitz numbers were recently computed explicitly using TQFT by Gunningham \cite{Gun}. Our vertical contribution correctly reproduces the part of his formula corresponding to length-1 partitions (Corollary \ref{Guncor}).  Again this is the whole thing when $d=1,2$.  It is mysterious how the MNOP conjecture matches up the very different occurrences of these partitions in the two theories.
\item The descendent-MNOP correspondence involves a universal matrix $$\widetilde\sfK_{\mu\nu} \in \Q[i,c_1,c_2,c_3](\!(u)\!),$$ where $\mu, \nu$ run over all partitions and $i^2=-1$. Proposition \ref{PaPix} shows that for local surfaces with irreducible reduced canonical divisor and $\deg\sigma_i\ge2$ we only need to know the specialisation
\beq{simplf}
\widetilde\sfK_{\mu\nu}\Big|_{c_1 = t,\,c_2=c_3=0}\quad\mathrm{for}\ \mu,\nu\ \mathrm{of\ length\ one.}
\eeq
In this case writing $\mu=(a),\,\nu=(b)$,  the specialisation \eqref{simplf} equals
$$
t^{a-b} \cdot f_{ab}(u)\quad\mathrm{for\ some\ }f_{ab}(u) \in \Q[i](\!(u)\!)
$$
by \cite{PP1}. We conjecture that $f_{ab}(u)$ is a Laurent \emph{monomial} of degree $1-a$ (Conjecture \ref{conj1}). Assuming this we show the $f_{ab}(u)$ are uniquely determined by the fact that the Gromov-Witten generating function \eqref{ZGWfull} starts in the correct degree. They then uniquely determine the surface invariants \eqref{surfin}, confirming (Corollary \ref{MPcor}) old conjectural formulae of Maulik-Pandharipande \cite{MP}. Maulik-Pandharipande's formulae were first proved on the Gromov-Witten side by Kiem-Li \cite{KL1, KL2} and later J.~Lee in symplectic geometry \cite{Lee}. Our calculation via descendent-MNOP requires a combinatorial identity we found experimentally in Maple and Mathematica, and which is proved in Appendix by A.~Pixton and D.~Zagier (Theorem \ref{appthm}). 
\end{itemize}

\begin{remark}
This paper only considers the vertical component of the zero locus of the cosection in $P_{X}^{T}$. In a sequel \cite{KT4} we calculate the contribution of the other components in the case of \emph{bare curves} (i.e.~minimal $\chi$, so that the stable pairs have no cokernel or ``free points"). This turns out to explain part of the structure of S.~Gunningham's formula \cite{Gun} from the stable pairs point of view. 
\end{remark}

\noindent \textbf{Relations to older work.} The results of this paper can be seen as being precisely \emph{orthogonal} to the earlier work \cite{KT1, KT2} on reduced classes. There we also considered stable pair invariants on $X = K_S$ for $S$ a surface with holomorphic 2-forms: $h^{2,0}(S)>0$. But we worked only with effective curve classes $\beta$ for which the Noether-Lefschetz locus has the \emph{expected codimension} $h^{2,0}(S)$. The standard invariants (Gromov-Witten, stable pairs) therefore vanish, and we get interesting \emph{reduced} invariants only by reducing the obstruction bundle in a canonical way.

Here we study the standard (nonreduced) GW/stable pairs invariants. These need not vanish for curve classes whose Noether-Lefschetz locus has the ``wrong" codimension. We find the only classes which contribute are multiples of the canonical class $\k$ (whose Noether-Lefschetz locus has codimension 0).

The papers \cite{KT1,KT2} also focused on the \emph{horizontal component} of the moduli space of stable pairs. This is the only component relevant for (sufficiently ample) enumerative problems on $S$ such as G\"ottsche's conjecture. Here the horizontal component does not contribute to the invariants and we study the \emph{vertical component} instead. There we derived universality results but no closed formula. Here we obtain a closed product formula when all insertions come from $H^2$ classes. \\

\noindent\textbf{Plan.} We localise $[P_X]^\vir$ first to its $T$-fixed locus, in Section \ref{sectionfixedlocus}, then further to pairs supported on thickenings of a canonical divisor $C$ in Section \ref{cosecsec}. This will be enough to prove Theorem \ref{main0}. The moduli space of $T$-fixed pairs supported on a \emph{vertically thickened} smooth curve $C$ is identified with a nested Hilbert scheme of $C$ in Section \ref{sectionneshilb}. Section \ref{virtsec} expresses the virtual cycle as a cycle on this nested Hilbert scheme. This is further simplified to an expression on a single symmetric product $\Sym^{n_0\!}C$ in Section \ref{vircycleII}.
In Sections \ref{virnorsec} and \ref{descendsec} we see how the virtual normal bundle and descendent integrands simplify on $\Sym^{n_0\!}C$. This allows us to compute the integrals in Sections \ref{step2} and \ref{step3} and derive Theorem \ref{main}. The formulae are rather lengthy and complicated at each stage, until right at the end they are summed up into a mysteriously simple closed product formula. This suggests one should work with the generating series, rather than individual invariants, from the beginning, but we have not found a way to do this. Finally Sections \ref{GWX} and \ref{GWS} discuss applications to the Gromov-Witten invariants of $X$ and $S$ respectively. \medskip

\noindent\textbf{Notation.} Given any map $f\colon A\to B$ we also use $f$ for the induced map $f\times\id_C\colon A\times C\to B\times C$.
We suppress various pullback maps for clarity of exposition. We denote the cohomology class Poincar\'e dual to a cycle $A$ by $[A]$. We use ${}^\vee$ for derived dual of complexes, and ${}^*$ for the underived dual $\hom(\ \cdot\ ,\O)$ of coherent sheaves.

We use the standard conventions for (possibly negative) binomial coefficients. That is
\beq{binomconv}
{n\choose k}\ \ \mathrm{is\ defined\ to\ be\ }\ (-1)^k{k-n-1\choose k}\
\mathrm{\ when\ }\ n<0,k\ge0,
\eeq
and it is defined to be zero whenever $k<0$ or $k>n\ge0$. The binomial theorem $(1+x)^n=\sum_{k\ge0}{n\choose k}x^k$ then holds for any $n\in\Z$.
\medskip

\noindent\textbf{Acknowledgements.} We thank Jim Bryan and Rahul Pandharipande for useful conversations. We are grateful to Aaron Pixton and Don Zagier for proving our conjecture (now Theorem \ref{appthm}) in Appendix A. We also thank Frits Beukers and Wadim Zudilin for discussions on this conjecture; in fact they found an independent proof after the Appendix was written. Finally, we warmly thank the referee for a careful reading of the manuscript and pointing out some mistakes.

Both authors were supported by EPSRC programme
grant EP/G06170X/1. Part of this research was done while the first author was a PIMS postdoc (CRG Geometry and Physics) at University of British Columbia, and on NWO-GQT funding and a Marie Sk{\l}odowska-Curie IF (656898) at Utrecht University. 

\section{$T$-localised stable pair theory} \label{sectionfixedlocus}

Let $P_X:=P_\chi(X,\iota_*\beta)$ denote the moduli space of stable pairs $(F,s)$ on $X$. It is a quasi-projective scheme whose product with $X$,
\begin{displaymath}
\xymatrix@=15pt
{
& P_X \times X \ar[dl]_{\pi_P} \ar[dr]^{\pi_X} & \\
P_X & & X,\!
}
\end{displaymath}
carries a universal sheaf $\FF$, section $s$ and universal complex
\[
\II\udot = \{\O\To\FF \}.
\]

The action of $T$ on $X$ induces one on $P_X$ with respect to which $\FF$ and $\II\udot$ are $T$-equivariant. Since the $T$-fixed locus
$$
P^T_X\subset P_X
$$
is compact we may use virtual localisation \cite{GP} to define stable pair invariants of $X$ via residue integrals over the virtual cycle of $P_X^T$.

To describe the virtual cycle, we view stable pairs $(F,s)$ as objects $I\udot:=\{\O_X\rt{s}F\}$ of $D(X)$ of trivial determinant as in \cite{PT1}. Then $P_X$ acquires a $T$-equivariant perfect symmetric obstruction theory \cite[Theorem 4.1]{HT}
\beq{pofs}
E\udot:=R\hom_{\pi_{P}}(\II\udot, \II\udot)_{0}^\vee[-1]\To\LL_{P_X}
\eeq
with obstruction sheaf
$$
\Ob_X\!:=\ext_{\pi_P}^{2}(\II\udot, \II\udot)\_0.
$$
Here $(\,\cdot\,)_0$ denotes trace-free part. By \cite{GP} the $T$-fixed locus $P_X^T$ inherits a perfect obstruction theory
\beq{Tdef}
\big(\!R\hom_{\pi_{P}}(\II\udot, \II\udot)_{0}^f\big)\!^\vee[-1]\To\LL_{P_X^T}
\eeq
with obstruction sheaf
$$
\Big(\!\Ob_X\!\big|\_{P^T_X}\Big)^f.
$$
Here $(\,\cdot\,)^f$ denotes the $T$-fixed part: the weight-0 part of the complex. 

The obstruction theory \eqref{Tdef} defines a virtual cycle on $P^T_X$ by \cite{BF,LT}. The $T$-localised invariants of $X$ are defined by integrating insertions against the cap product of $e(N^{\vir})^{-1}$ with this virtual cycle. Here the virtual normal bundle $N^{\vir}=\{V_0\to V_1\}$ is defined to be the part of \eqref{pofs} (dualised and restricted to $P^T_X$) with nonzero weights, and
\[
e(N^{\vir}):= \frac{c_{\mathrm{top}}^{T}(V_0)}{c_{\mathrm{top}}^{T}(V_1)} \in H_{T}^{*}(P_{X}^{T},\Q)\otimes_{\Q[t]}\Q[t,t^{-1}]\cong
H^*(P_{X}^{T},\Q) \otimes_{\Q} \Q[t,t^{-1}]
\]
is its $T$-equivariant virtual Euler class.\footnote{We may choose the $V_i$ to be $T$-equivariant vector bundles with no weight-$0$ parts, so that the $c_{\mathrm{top}}^{T}(V_i)$ are invertible in $H_{T}^{*}(P_{X}^{T},\Q)\otimes_{\Q[t]}\Q[t,t^{-1}]$.} As usual $$t:=c_1(\t)\in H^*(BT,\Q)\cong\Q[t]$$ denotes the first Chern class of the standard weight-1 representation $\t$ of $T$, the generator of the equivariant cohomology of $BT$.

In this paper we are interested in descendent insertions. The sheaf $\FF$ is $T$-equivariant, so we can consider its $T$-equivariant Chern classes
\[
\ch_{i}^{T}(\FF) \in H^{*}_{T}(P_X,\Q).
\]
Given any $\sigma\in H^*(S,\Q)$, we consider it as lying in $H^*_T(X,\Q)$ (or its localization at $t$) by identifying it with the element
\begin{equation} \label{HtoHT}
\sigma\otimes1\in H^*(S,\Q)\otimes_{\Q}\Q[t]\ \cong
\xymatrix{H^*_T(S,\Q)\ar[r]^(.48){\pi^*}_(.46)\sim & H^*_T(X,\Q).}
\end{equation}
Then for any integer $\alpha \geq 0$, define
\begin{equation} \label{defdesc}
\tau_\alpha(\sigma) := \pi_{P*} \big( \pi_X^{*} \sigma \cap \ch_{\alpha+2}^{T}(\FF) \big)\ \in\ H^{*}_{T}(P_X,\Q).
\end{equation}
The descendent invariants of $X$ are
\begin{equation*} 
P_{\chi,\beta}\big(X,\tau_{\alpha_1}(\sigma_1) \cdots \tau_{\alpha_m}(\sigma_m)\big) := \int_{[P_{\chi}(X,\beta)^{T}]^{\vir}} \frac{1}{e(N^{\vir})} \prod_{j=1}^{m} \tau_{\alpha_j}(\sigma_j)\big|\_{P_{\chi}(X,\beta)^{T}}
\end{equation*}
in $\Q[t,t^{-1}]$.


\section{The cosection} \label{cosecsec}

Let $\theta \in H^0(K_S)$ be a nonzero holomorphic 2-form with zero divisor $C$. We construct a natural induced cosection of the obstruction sheaf $\big(\!\Ob_{X}\!|_{P_{X}^{T}} \big)^f$. To use Serre duality it is convenient to compactify $X$,
$$
X\subset\Xb:=\PP(K_S\oplus\O_S),
$$
and use the projections 
$$
\xymatrix@=15pt{
& P_X \times\Xb \ar[dl]_{\overline\pi_P} \ar[dr]^{\pi_{\Xb}} & \\
P_X & & \Xb.}
$$
The universal stable pair $\II\udot = \{\mathcal{O}_{P_X\times X} \to \FF \}$ pushes forward to a universal stable pair
$$
\II\udot_{\Xb} := \big\{\mathcal{O}_{P_X\times\Xb} \To j_*\FF \big\}
$$
on $P_X\times\Xb$. Since $\II\udot_{\Xb}$ is isomorphic to $\O$ away from the support of $\FF$ in $X$, and since $\omega_{\Xb}$ is also trivial on restriction to $X$, we see that
$$
R\hom\big(\II\udot_{\Xb},\II\udot_{\Xb}\otimes\omega_{\Xb}\big)\_0\ \cong\ R\hom\big(\II\udot_{\Xb},\II\udot_{\Xb}\big)\_0\ =\ 
j_{*\,}R\hom\big(\II\udot,\II\udot\big)\_0\,.
$$
Pushing down by $\overline\pi_P$ gives
\beq{XXb}
R\hom_{\overline\pi_P}\big(\II\udot_{\Xb},\II\udot_{\Xb}\otimes\omega_{\Xb}\big)\_0\ \cong\ R\hom_{\overline\pi_P}\big(\II\udot_{\Xb},\II\udot_{\Xb}\big)\_0\ =\
R\hom_{\pi_P}\big(\II\udot,\II\udot\big)\_0\,.
\eeq

Translation by $\theta$ up the $K_S$ fibres of $X\to S$ defines a vector field $v_\theta$ on $X$, vanishing only on the preimage of $C \subset S$. Translating stable pairs by $v_\theta$ defines a vector field $V_\theta$ on $P_X$:
\beq{vfield}
V_\theta\ =\ v_\theta\ip\At(\II\udot)\ \in\
\Gamma\big(\ext^1_{\pi_P}\big(\II\udot,\II\udot\big)\_0\big).
\eeq
Pairing with the obstruction sheaf using \eqref{XXb} defines a map
\begin{eqnarray}
\ext^2_{\pi_P}\big(\II\udot,\II\udot\big)\_0 &\Rt{V_\theta\otimes1}&
\ext^1_{\pi_P}\big(\II\udot,\II\udot\big)\_0\otimes
\ext^2_{\pi_P}\big(\II\udot,\II\udot\big)\_0 \nonumber \\ &\cong& 
\ext^1_{\overline\pi_P}\big(\II\udot_{\Xb},\II\udot_{\Xb}\otimes\omega_{\Xb}\big)\_0\otimes
\ext^2_{\overline\pi_P}\big(\II\udot_{\Xb},\II\udot_{\Xb}\big)\_0 \label{Serre} \\ &\Rt{\cup}&
\ext^3_{\overline\pi_P}\big(\II\udot_{\Xb},\II\udot_{\Xb}\otimes\omega_{\Xb}\big)
\Rt{\tr}R^3\overline\pi_{P*}\omega_{\Xb}\ \cong\ \O_{P_X}. \nonumber
\end{eqnarray}
In the last Section we localised to the fixed locus $P^T_X\subset P_X$. Restricting \eqref{Serre} to $P^T_X$ and taking fixed (weight 0) parts gives a \emph{cosection}
\beq{cosec}
\sigma_\theta\colon\left(\!\Ob_{X}\!\big|\_{P^T_X}\right)^f\To\O_{P^T_X}.
\eeq
Its zero locus inherits a scheme structure from the cokernel of \eqref{cosec}.

Basechange issues\footnote{$\ext^1_{\pi_P}$ does not basechange well, but we will be able to use the fact that $\ext^2_{\pi_P}$ does.} make it nontrivial to equate the zero scheme of the cosection $\sigma_\theta$ with the zero scheme of vector field $V_\theta$ \eqref{vfield}. The correct formulation involves restricting $V_\theta$ to any subscheme $Z\subset P^T_X$ by first taking its image in the sheaf $\ext^1_{\pi_P}(\II\udot,\II\udot)_0|_Z$ then further restricting to $\ext^1_{\pi^Z_P}(\II\udot|_{Z\times X},\II\udot|_{Z\times X})_0$, where $\pi^Z_P\colon Z\times X\to Z$ is the restriction of $\pi_P\colon P_X\times X\to P_X$. Equivalently, but more directly, we just set
\beq{zfield}
V_{\theta,Z}\ :=\ v_\theta\ip\At(\II\udot|_{Z\times X})\ \in\ 
\Gamma\big(\ext^1_{\pi_P^Z}\big(\II\udot|_{Z\times X},\II\udot|_{Z\times X}\big)\_0\big).
\eeq
It is a $P_X$-vector field on $Z$ (so it need not be tangent to $Z$).

\begin{lemma} \label{20120007231089}
The zero locus $Z(\sigma_\theta)$ of the cosection \eqref{cosec} is the largest subscheme $Z\subset P_X^T$ for which $V_{\theta,Z}$ \eqref{zfield} is identically zero.
\end{lemma}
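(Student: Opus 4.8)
The plan is to test the asserted equality against an arbitrary closed subscheme $Z\subset P^T_X$ and thereby reduce everything to a single per-$Z$ equivalence. Since the cosection's zero locus carries the scheme structure of $\coker(\sigma_\theta)$, it is cut out by the ideal $\im(\sigma_\theta)\subset\O_{P^T_X}$; hence $Z\subset Z(\sigma_\theta)$ holds precisely when $\im(\sigma_\theta)\cdot\O_Z=0$, i.e.\ when the restricted map $\sigma_\theta|_Z$ (the map $\sigma_\theta\otimes\O_Z$) vanishes. Thus $Z(\sigma_\theta)$ is tautologically the largest subscheme on which $\sigma_\theta$ restricts to zero, and it remains only to prove, for every such $Z$,
\[
\sigma_\theta|_Z=0\iff V_{\theta,Z}=0.
\]

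Next I would unwind the construction \eqref{Serre} of $\sigma_\theta$ and restrict it to $Z$, pushing each term through its base-change comparison map. Writing $\cE^i:=\ext^i_{\pi^Z_P}(\II\udot|_{Z\times X},\II\udot|_{Z\times X})_0$ for the $Z$-relative $\ext$ sheaves, the Yoneda product and trace in \eqref{Serre} are compatible with restriction, so $\sigma_\theta|_Z$ becomes the cup--trace pairing $e\mapsto\tr\big(V\cup e\big)$ in which $V$ is the image of $V_\theta|_Z$ in $\cE^1$; but that image is exactly $V_{\theta,Z}$ by definition \eqref{zfield}. This is the step where the base-change warning of the footnote is met head-on and neutralised: although $\ext^1_{\pi_P}$ does not commute with base change, inside \eqref{Serre} it only ever appears cupped into $\ext^3$ against the obstruction sheaf $\ext^2$, and $\ext^2$ \emph{does} base-change (being the top nonvanishing relative $\ext$), so the source of $\sigma_\theta|_Z$ is canonically $\cE^2$ and the whole composite depends on $V_\theta|_Z$ only through $V_{\theta,Z}$. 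Thus $\sigma_\theta|_Z$ is identified (up to an invertible equivariant factor, immaterial for the zero locus) with the intrinsic pairing $\langle V_{\theta,Z},-\rangle\colon\cE^2\to\O_Z$.

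It then suffices to show this pairing is nondegenerate in its first slot, so that $\langle V_{\theta,Z},-\rangle=0$ forces $V_{\theta,Z}=0$. For this I would apply relative Serre duality to the smooth projective family $Z\times\Xb\to Z$ together with the $Z$-version of \eqref{XXb}, obtaining a self-duality $C\simeq C^\vee[-3]$ of $C:=R\hom_{\pi^Z_P}(\II\udot|_{Z\times X},\II\udot|_{Z\times X})_0$. Because a stable pair on a threefold is simple, the trace-free $\ext^0$ and $\ext^3$ vanish, so $C$ is concentrated in degrees $1$ and $2$ with cohomology $\cE^1,\cE^2$; taking $H^1$ of the self-duality and using this concentration collapses the local-$\ext$ spectral sequence to give $\cE^1\cong\hom(\cE^2,\O_Z)$, the isomorphism being induced precisely by cup--trace. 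In particular $V\mapsto\langle V,-\rangle$ is injective, which supplies the nontrivial implication and closes the equivalence.

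The main obstacle is the middle step: making rigorous that the cosection built globally on $P^T_X$ in \eqref{cosec}, after restriction to $Z$, literally equals the pairing against the intrinsically defined $V_{\theta,Z}$, in spite of the failure of base change for $\ext^1$. The workable idea is that $\ext^1$ enters only through a cup product landing in the well-behaved $\ext^2$ and $\ext^3$ terms; verifying compatibility of the comparison maps with the Yoneda product and trace of \eqref{Serre} is the delicate point, whereas the concluding Serre-duality nondegeneracy is routine once the degree-$0$ and degree-$3$ vanishing is in hand.
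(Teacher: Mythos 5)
Your proposal is correct and follows essentially the same route as the paper's proof: the tautological identification of $Z(\sigma_\theta)$ as the largest $Z$ on which $\sigma_\theta$ restricts to zero, base change via the well-behaved $\ext^2_{\pi_P}$ to identify $\sigma_\theta|_Z$ with the pairing against $V_{\theta,Z}$, and relative Serre duality on the compactified family $Z\times\Xb\to Z$ (together with the vanishing of the trace-free $\ext^0$ and $\ext^3$) to see that this pairing realises $V_{\theta,Z}$ as a section of $\hom\big(\Ob_X|_Z^f,\O_Z\big)$, hence vanishes iff $V_{\theta,Z}=0$. Your spectral-sequence collapse and the paper's ``vanishing of the other $\ext_0$s'' are the same mechanism, and your remark about the invertible equivariant factor is exactly the paper's weight bookkeeping for the trivialisation of $\omega_{\Xb}|_X$.
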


\begin{proof}
By basechange and the vanishing of the higher $(\ext_{\pi_P})_{0\,}$s, we have
\beq{OBZ}
\Big(\!\Ob_X\big|_Z\Big)^f\,=\,\Big(\ext^2_{\pi_P^Z}(\II\udot|_{Z\times X},\II\udot|_{Z\times X})_0\Big)^f,
\eeq
and the restriction of the cosection \eqref{cosec} to $Z$ is the map
\beq{coZ}
\Ob_X\big|_Z^f\To\O_Z
\eeq
given by restricting \eqref{Serre} to $Z$. It follows that the zero locus $Z(\sigma_\theta)$ is the largest $Z$ for which this map vanishes.

The map \eqref{coZ} is therefore the pairing with the section $V_{\theta,Z}$ \eqref{zfield} of
\beq{dZ}
\Big(\ext^1_{\pi_P^Z}\big(\II\udot|_{Z\times X},\II\udot|_{Z\times X}\otimes\omega_X\big)\_0\Big)^f.
\eeq
[Though $V_\theta$ has $T$-weight 1, the identification in the second line of \eqref{Serre} multiplies by the weight $-1$ trivialisation of $\omega_{\overline X}|\_X$, giving a $T$-fixed section.]
But this pairing makes the coherent sheaf \eqref{dZ} the dual $\hom(\Ob_X\!|_Z^f,\O_Z)$ of the sheaf $\Ob_X\!|_Z^f$ \eqref{OBZ}, by relative Serre duality for the map $\overline\pi_P^Z$, its compatibility with the $T$-action, and the vanishing of the other $\ext_{0\,}$s. Therefore $Z(\sigma_\theta)$ is the largest $Z\subset P_X^T$ for which the section $V_{\theta,Z}$ vanishes, as claimed.
\end{proof}

From now on we assume $C$ is reduced and irreducible. To describe a subscheme of $P^T_X$ containing the zero scheme of the cosection \eqref{cosec} we need some notation. For any (finite, 2-dimensional) partition $\lam = (\lambda_0 \geq \lambda_1 \geq \cdots\ge\lambda_{l-1})$, we denote by $\lam C \subset X$ the Cohen-Macaulay curve defined by the $T$-invariant ideal sheaf
$$
I_{\lam C}\,:=\ \O(-\lambda_0S)\ +\ I_C(-\lambda_1S)\ +\ I_C^2(-\lambda_2S)\ +\ \ldots\ +\ I_C^{l-1}(-\lambda_{l-1}S)\ +\,I_C^l.
$$
Here $I_C=\pi^*\O_S(-C)$ is the ideal sheaf of $\pi^*C$, and $\O(-S)\cong K_S^{-1}\otimes\t^{-1}$ is the ideal sheaf of the zero section $S\subset K_S=X$.

\begin{example} \label{vtr} The partitions $\lam=(4,2,1)$ and $\lam=(3,3,1)$ of 7 give the following two thickenings $\lam C$ of total size $|\lam|=\sum\lambda_i=7$. \smallskip
\begin{displaymath}
\xy
(0,0)*{}; (0,20)*{} **\dir{-} ; (5,0)*{} ; (5,20)*{} **\dir{-} ; (10,0)*{} ; (10,10)*{} **\dir{-} ; (15,0)*{} ; (15,5)*{} **\dir{-} ;  (0,0)*{} ; (15,0)*{} **\dir{-} ; (0,5)*{} ; (15,5)*{} **\dir{-} ; (0,10)*{} ; (10,10)*{} **\dir{-} ; (0,15)*{} ; (5,15)*{} **\dir{-} ; (0,20)*{} ; (5,20)*{} **\dir{-}; (8,-3)*{4\ \ 2\ \ 1\,};
(30,0)*{}; (30,15)*{} **\dir{-} ; (35,0)*{} ; (35,15)*{} **\dir{-} ; (40,0)*{} ; (40,15)*{} **\dir{-} ; (45,0)*{} ; (45,5)*{} **\dir{-} ;  (30,0)*{} ; (45,0)*{} **\dir{-} ; (30,5)*{} ; (45,5)*{} **\dir{-} ; (30,10)*{} ; (40,10)*{} **\dir{-} ; (30,15)*{} ; (40,15)*{} **\dir{-}; (38,-3)*{3\ \ 3\ \ 1\,};
(72,0)*{\longrightarrow \mathrm{surface}} ; (60,8)*{\rotatebox{90}{$\longrightarrow \mathrm{fibre}$}}
\endxy
\end{displaymath}
The first is \emph{strict}: $\lambda_0>\lambda_1>\cdots>\lambda_{l-1}>0$, while the second is not. 
\end{example}

We can slice horizontally instead of vertically. If $\lam^t=(\mu_0,\mu_1,\cdots)$ denotes the transpose partition,
we can think of $\lam C$ as the curve obtained by thickening $C$ to order $\mu_0$ at $T$-weight level 0, $\mu_1$ at $T$-weight level $-1$, etc.:
$$
I_{\lam C}\ =\ I_C^{\mu_0}\,+\,I_C^{\mu_1}(-S)\,+\,I_C^{\mu_2}(-2S)\,+\,\ldots\,+\,I_C^{\mu_{k-1}}(-(k-1)S)\,+\,\O(-kS).
$$
In the above Example \ref{vtr}, the transposed partitions $\lam^t$ are $(3,2,1,1)$ and $(3,2,2)$ respectively.


If $\lam$ has size $|\lam|=\sum\lambda_i=d$ we write $\lam\vdash d$. We fix $\chi$ throughout this Section and denote by
$$
P_{\lam C} := P_{\chi}(\lam C)\subset P_{\chi}(X,d[C])
$$
the moduli space of stable pairs with holomorphic Euler characteristic $\chi$ whose scheme-theoretic support is precisely $\lam C$. Since $\lam C$ is $T$-invariant, $P_{\lam C}$ has a $T$-action and its fixed locus is a closed subscheme
$$
P_{\lam C}^{T}\ =\ P_{\lam C}\cap P_{\chi}(X,d[C])^{T}.
$$
We will find that the support of stable pairs in the zero locus $Z(\sigma_\theta)$ of the cosection have support $\lam C$ for $\lam$ \emph{strict}.

\begin{proposition} \label{deglocinside...}
The zero scheme $Z(\sigma_\theta)$ of the cosection \eqref{cosec} is nonempty only if $\beta = d[C]$ for some $d>0$. In this case, it is a closed subscheme of 
$$
\bigsqcup_{\lam\,\vdash d\ \mathrm{strict}}P_{\lam C\,}^{T}.
$$
\end{proposition}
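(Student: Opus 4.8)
The plan is to run everything through Lemma~\ref{20120007231089}, which already identifies $Z(\sigma_\theta)$ with the largest $Z\subset P_X^T$ on which the $P_X$-vector field $V_{\theta,Z}=v_\theta\ip\At(\II\udot|_{Z\times X})$ vanishes. Over a point $[\II\udot]\in P_X^T$ this section is nothing but the Kodaira--Spencer class of the first-order deformation obtained by flowing $\II\udot$ along the fibrewise translation $v_\theta$, so $V_{\theta,Z}=0$ says precisely that the pair is infinitesimally fixed by that translation. Since $v_\theta$ vanishes exactly on $\pi^{-1}(C)$, the whole statement localises to a computation near the generic point of $C$, and I would carry it out in transverse coordinates $(x,y)$ with $y$ the fibre coordinate (of $T$-weight $1$) and $x$ a transverse coordinate cutting out $C=\{x=0\}$. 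As $C$ is reduced, $\theta=x\cdot(\mathrm{unit})$, so $v_\theta$ is the derivation $D_\theta:=\theta\,\partial_y=x\,\partial_y$ of $\O_X$.

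First I would establish $\beta=d[C]$. If the support of $F$ met a fibre over some $p\notin C$, then there $\theta$ is a unit and $D_\theta=\partial_y$ up to a unit. The $T$-fixedness forces the finite fibre thickening to be concentrated at the origin of that fibre, i.e. $(y^n)$ for some $n\ge1$, and $\partial_y(y^n)=n\,y^{n-1}\notin(y^n)$, so the flow moves the support and $V_\theta\ne0$. Hence $\mathrm{Supp}(F)\subseteq\pi^{-1}(C)$, giving $\pi_*[F]=d[C]$, with $d>0$ because $F\ne0$ is pure of dimension one. Next, a $T$-fixed pure sheaf on $\pi^{-1}(C)$ that is finite of degree $d$ over $C$ has Cohen--Macaulay support equal to $\lam C$ for a unique partition $\lam\vdash d$: the fibrewise $T$-weight decomposition of the support ideal is a weakly decreasing staircase, which transverse to $C$ is exactly the monomial ideal $I_{\lam C}=(y^{\lambda_0},\,xy^{\lambda_1},\dots,x^{l-1}y^{\lambda_{l-1}},\,x^l)$. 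This already shows $Z(\sigma_\theta)\subseteq\bigsqcup_{\lam\vdash d}P_{\lam C}^T$.

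For strictness I would apply $D_\theta=x\,\partial_y$ to the generators: $D_\theta(x^iy^{\lambda_i})=\lambda_i\,x^{i+1}y^{\lambda_i-1}$, which lies in $I_{\lam C}$ if and only if $\lambda_i-1\ge\lambda_{i+1}$, i.e. if and only if $\lambda_i>\lambda_{i+1}$ (and $D_\theta(x^l)=0$). Thus $D_\theta$ preserves $I_{\lam C}$ --- the necessary condition for the translation flow to fix $\lam C$, hence for $V_\theta$ to have a chance to vanish --- exactly when $\lam$ is strict. Conversely, if $\lambda_i=\lambda_{i+1}$ for some $i$ then $D_\theta$ sends the generator $x^iy^{\lambda_i}$ to the basis monomial $x^{i+1}y^{\lambda_i-1}$ of $\O_{\lam C}$, so the support genuinely moves under the flow, $V_\theta\ne0$ there, and $P_{\lam C}^T\cap Z(\sigma_\theta)=\emptyset$. (The Example is consistent: $(4,2,1)$ is preserved, whereas for $(3,3,1)$ one has $D_\theta(y^3)=3xy^2\notin I_{\lam C}$.) This yields the claimed inclusion into strict partitions.

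The main obstacle is the passage from the clean scheme-level statement ``$D_\theta(I_{\lam C})\not\subseteq I_{\lam C}$'' to the sheaf-theoretic conclusion ``$V_{\theta,Z}\ne0$ in $\ext^1_{\pi_P^Z}(\II\udot,\II\udot)_0$''. The cleanest route I see is to work at the generic point of $C$, where the stable pair carries no free points and $\II\udot$ is quasi-isomorphic to the ideal sheaf $I_{\lam C}$, and to show that $v_\theta\ip\At(\II\udot)$ is there represented by the normal component $\big(I_{\lam C}\hookrightarrow\O_X\xrightarrow{D_\theta}\O_X\twoheadrightarrow\O_{\lam C}\big)\in\Hom(I_{\lam C},\O_{\lam C})\subset\Ext^1(\O_{\lam C},\O_{\lam C})$, which is nonzero precisely when $D_\theta(I_{\lam C})\not\subseteq I_{\lam C}$. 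The care required is in handling the trace-free part, in checking that the section $s$ (and any free points) does not kill the class, and in verifying that the obstruction is detected already at the generic point of $C$; but all of these compatibilities are exactly the base-change and Serre-duality identities assembled in Lemma~\ref{20120007231089}, so I would invoke that lemma rather than re-prove them.
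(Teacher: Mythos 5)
Your strategy is the paper's own: use Lemma \ref{20120007231089} to reduce to the vanishing of $V_{\theta,Z}$, interpret $V_{\theta,Z}$ as the class of the first-order deformation of the pair under translation by $v_\theta$, and then determine which $T$-invariant supports are preserved by that flow. Your monomial computation is the paper's computation transposed: the paper writes the support ideal in its $T$-weight decomposition $\pi^*I_0+\pi^*I_1\,s+\cdots+\pi^*I_{k-1}s^{k-1}+(s^k)$, deduces $\theta\cdot I_i\subset I_{i-1}$ and $\theta\in I_{k-1}$ from triviality of the deformation, and then purity together with $C$ reduced and irreducible forces $I_i=(\theta^{\mu_i})$ with $\mu_i+1\ge\mu_{i-1}$, which is strictness of the transpose partition; your condition $\lambda_i>\lambda_{i+1}$, read off from $D_\theta(x^iy^{\lambda_i})=\lambda_i x^{i+1}y^{\lambda_i-1}$, is the same inequality read along columns instead of rows. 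Splitting the argument into ``support lies over $C$'', ``support equals $\lam C$'' and ``$\lam$ is strict'' is a harmless reorganisation of the same deductions.

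The genuine gap is in your final paragraph. You choose to prove ``the support moves $\Rightarrow V_{\theta,Z}\ne0$'' by exhibiting a nonzero component of $v_\theta\ip\At(\II\udot)$ inside $\Hom(I_{\lam C},\O_{\lam C})$ at the generic point of $C$, and you justify the compatibilities this requires (trace-free part, the section and free points not killing the class, detection at the generic point) by asserting they ``are exactly'' the identities assembled in Lemma \ref{20120007231089}. They are not: that lemma only identifies $Z(\sigma_\theta)$ with the largest subscheme on which $V_{\theta,Z}$ vanishes; it contains nothing relating $V_{\theta,Z}$ to deformations of the support, and no statement that the subscheme-deformation class maps injectively into $\ext^1_{\pi_P^Z}(\II\udot,\II\udot)_0$. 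So the step you yourself flag as the main obstacle is, as written, deferred to a lemma that does not prove it. The fix is to run the implication in the opposite (logically equivalent) direction, exactly as the paper does: on $Z=Z(\sigma_\theta)$, Lemma \ref{20120007231089} gives $V_{\theta,Z}=0$; since $V_{\theta,Z}$ classifies the first-order deformation of $\II\udot|_{Z\times X}$ obtained by translating by $tv_\theta$, that deformation is trivial; a trivial (pulled-back) family of pairs has pulled-back support; hence the support ideal is preserved by the flow to first order, i.e. $D_\theta(I)\subseteq I$, and your monomial computation concludes. This direction needs no decomposition of $\Ext^1$, no discussion of free points or cancellation, and, because it is applied to the universal pair over all of $Z(\sigma_\theta)$ rather than point by point, it also yields the scheme-theoretic inclusion asserted in the Proposition rather than only a set-theoretic one.
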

\begin{proof}
Let $Z:=Z(\sigma_\theta)$ and let $s$ denote the tautological section of $\pi^*K_S$ cutting out the zero section $S\subset X$. We use $T$-invariance to write the ideal sheaf of the support of $\FF|_{Z\times X}$ in the form
\begin{align} 
\begin{split} \label{ideal0}
&\pi^* I_0 + \pi^* I_1 . s + \cdots + \pi^* I_{k-1} . s^{k-1} + (s^k), \\
&\qquad I_0 \subset I_1 \subset \cdots \subset I_{k-1} \subset \O_{Z \times S},
\end{split}
\end{align}
for some integer $k>0$.

Let $t$ denote the coordinate on $\C_t:=\C$. Then pulling back $\II\udot|\_{Z\times X}$ to $Z\times X\times\C_t$ and translating by $tv_\theta$ gives a new family of stable pairs over $Z\times X\times\C_t$ whose support is defined by the ideal
\beq{idealt}
\pi^* I_0 + \pi^* I_1 . (s-t\pi^*\theta) + \cdots + \pi^* I_{k-1} . (s-t\pi^*\theta)^{k-1} + ((s-t\pi^*\theta)^k).
\eeq
Restricting to $\Spec\C[t]/(t^2)\subset\C_t$ gives a flat family of stable pairs on $X$ parameterized by $Z\times\Spec\C[t]/(t^2)$ whose support has ideal \eqref{idealt} mod $t^2$,
\beq{moved}
\pi^* I_0 + \pi^* I_1 . (s-t\pi^*\theta) + \cdots + \pi^* I_{k-1} . (s^{k-1}-(k-1)t\pi^*\theta s^{k-2}) + (s^k-k t\pi^*\theta s^{k-1}).
\eeq
The corresponding first order deformation of $\II\udot|_{Z\times X}$ is classified by its extension class 
$$
V_{\theta,Z}\ \in\ \Ext^1\!\big(\II\udot|\_{Z\times X},\II\udot|\_{Z\times X}\big)\_0\ =\
\Gamma\big(\ext^1_{\pi_P|_Z}(\II\udot|\_{Z\times X},\II\udot|\_{Z\times X})\_0\big)
$$
of \eqref{zfield}. By Lemma \ref{20120007231089} this is zero, so the family is trivial. In particular its support is pulled back from $Z\times X$, so \eqref{moved} is the same ideal as \eqref{ideal0}$\otimes\C[t]/(t^2)$. That is,
$$
\theta\cdot I_i \subset I_{i-1}, \ \forall i = 1, \ldots, k-1, \qquad\mathrm{and}\ \theta \in I_{k-1}.
$$
Since $C$ is reduced and irreducible, and each $\O/I_i$ is pure, this implies that each $I_i=(\theta^{\mu_i})$ for some integer $\mu_i$, and that $\mu_i+1\ge\mu_{i-1}$. Thus we can write \eqref{ideal0} as
$$
(\theta^{\mu_0})+(s\theta^{\mu_1})+\cdots+(s^{k-1}\theta^{\mu_{k-1}})+(s^k),
$$
where we have suppressed some $\pi^*$s for clarity. Rewriting this as
$$
(s^{\lambda_0})+(\theta s^{\lambda_1})+\cdots+(\theta^{l-1}s^{\lambda_{l-1}})+(\theta^l),
$$
where $\lam=(\lambda_0,\lambda_1,\ldots)$ is the transpose of the partition $(\mu_0,\mu_1,\ldots)$, the condition $\mu_i+1\ge\mu_{i-1}$ becomes the requirement that $\lam$ be strict.
\end{proof}

So in Example \ref{vtr} we find that $P^T_{\lam C}$ contains zeros of the cosection when $\lam = (4,2,1)$, but not when $\lam = (3,3,1)$.

We have only considered the effect of the cosection on the underlying Cohen-Macaulay support curve of a stable pair, showing it forces it to be of the form $\lam C$ with $\lam$ strict. The proof also shows that bare curves of this form (i.e. a stable pair isomorphic to $(\O_{\lam C},1)$ with no cokernel of ``free points") lie in $Z(\sigma_\theta)$. For more general stable pairs, being in $Z(\sigma_\theta)$ also imposes conditions on its cokernel; see the sequel \cite{KT4} for more details.

In this paper we content ourselves with a characterization of \emph{vertical component} of $Z(\sigma_\theta)$,  where $\lam=(d)$ has length 1. Here there is no further condition on the cokernels of stable pairs.

\begin{corollary} \label{vercomp}
The zero scheme $Z(\sigma_\theta)$ of the cosection \eqref{cosec} on $P_{d[C]}^T$ has a component
$$
P_{(d)C}^{T} := P_{\lam C}^{T}, \quad \lam = (d).
$$
\end{corollary}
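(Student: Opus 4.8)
The plan is to prove the stronger statement that \emph{all} of $P_{(d)C}^T$ lies in $Z(\sigma_\theta)$; it is then a union of connected components by Proposition \ref{deglocinside...}, since that result exhibits $Z(\sigma_\theta)$ as a closed subscheme of the disjoint union $\bigsqcup_{\lam\ \mathrm{strict}}P_{\lam C}^T$, in which each piece is open and closed, and $(d)$ is (trivially) strict. By Lemma \ref{20120007231089} it therefore suffices to show that the $P_X$-vector field $V_{\theta,Z}$ of \eqref{zfield} vanishes identically when $Z=P_{(d)C}^T$.

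The geometric point is that the support of every pair in $P_{(d)C}^T$ is the single $T$-fixed subscheme $(d)C$, whose ideal $I_{(d)C}=I_C+\O(-dS)$ contains $I_C$; hence $(d)C\subset\pi^{-1}(C)$. Recall that $v_\theta$ vanishes exactly on $\pi^{-1}(C)$, and in fact its vertical coefficient $\theta$ (the local equation of $C$, suppressing $\pi^*$) lies in $I_{(d)C}$, so $v_\theta$ vanishes \emph{scheme-theoretically} on the support of $\FF|_{Z\times X}$. I would then argue that $V_{\theta,Z}=v_\theta\ip\At(\II\udot|_{Z\times X})$ is, up to the Atiyah class, multiplication by this coefficient $\theta$, which annihilates $\FF|_{Z\times X}$ and hence kills $\Ext^1(\II\udot|_{Z\times X},\II\udot|_{Z\times X})_0$; so the contraction vanishes.

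Concretely I would specialise the first-order calculation in the proof of Proposition \ref{deglocinside...}. Translating the support ideal $(s^d)+(\theta)$ by $s\mapsto s-t\theta$ gives $(s^d-dt\,\theta\,s^{d-1})+(\theta)$, which equals $(s^d)+(\theta)$ modulo $t^2$ because $\theta\,s^{d-1}\in(\theta)$; so the support is already fixed to first order. For strict partitions of length $>1$ the argument stops here and extra cokernel conditions appear, precisely because $\lam C\not\subset\pi^{-1}(C)$ so $v_\theta$ is nonzero along part of the support and can move the free points. In the length-$1$ case $v_\theta$ vanishes on \emph{all} of $(d)C$; since every free point of such a pair lies on $(d)C$, the translation fixes the cokernel as well, and the whole pair is left unchanged to first order. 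This is the meaning of ``there is no further condition on the cokernels.'' Combined with Proposition \ref{deglocinside...}, which forces $Z(\sigma_\theta)$ into $\bigsqcup_{\lam\ \mathrm{strict}}P_{\lam C}^T$, this identifies the part of $Z(\sigma_\theta)$ supported on $(d)C$ with all of $P_{(d)C}^T$ as schemes.

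The step I expect to be the main obstacle is the passage from the support statement to the full pair: showing that the section $\O\to\FF$ and the cokernel of $\FF$ contribute nothing to $v_\theta\ip\At(\II\udot|_{Z\times X})$, for the \emph{maximal} (possibly non-reduced) scheme structure on $P_{(d)C}^T$ rather than merely at points. This comes down to the clean algebraic fact that $\theta\in I_{(d)C}$ annihilates $\FF|_{Z\times X}$ and that $\II\udot$ agrees with $\O$ away from $(d)C$, so the relevant $\ext^1_{\pi_P^Z}$ sheaf is $\theta$-torsion; granting this, $V_{\theta,Z}=0$ identically and $P_{(d)C}^T\subseteq Z(\sigma_\theta)$, giving the claimed component.
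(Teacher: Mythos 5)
Your proposal is correct and takes essentially the same route as the paper: the paper's proof likewise observes that $v_\theta$ vanishes on $\pi^{-1}(C)\supset (d)C$, concludes that $V_\theta$ vanishes identically on $P^T_{(d)C}$ (cokernels included, since the free points also lie on $(d)C$), hence $P^T_{(d)C}\subseteq Z(\sigma_\theta)$ by Lemma \ref{20120007231089}, and then invokes Proposition \ref{deglocinside...} to conclude it is a whole component. Your first-order ideal computation and the $\theta$-torsion argument for the relevant $\ext^1$ sheaf are simply more explicit versions of the paper's one-line assertion, and they are sound.
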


\begin{proof}
The vector field $v_{\theta}$ vanishes on $\pi^*C \subset X$. As a consequence the vector field $V_\theta$ vanishes on $P_{(d)C}^{T}$ which therefore lies in the zero scheme $Z$ of the cosection \eqref{cosec}. By Proposition \ref{deglocinside...} it is a whole component of $Z$. (In fact we will see in Proposition \ref{degloc=nested} it is a disjoint union of connected components.)
\end{proof}

\begin{corollary} \label{hot}
Assume $S$ has a reduced, irreducible canonical divisor $C$. Then
$$
P_{\chi,\beta}\big(X,\tau_{\alpha_1}(\sigma_1) \cdots \tau_{\alpha_m}(\sigma_m)\big)=0
$$
unless $\beta = d\k$ for some $d\in\Z_{>0}$ and all $\sigma_i$ lie in $H^{\le2}(S)$.
\end{corollary}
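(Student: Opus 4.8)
The plan is to deduce this from Kiem--Li cosection localisation together with Proposition \ref{deglocinside...}; indeed this corollary is just Theorem \ref{main0} restated, so essentially all the content resides in the analysis of $Z(\sigma_\theta)$. Two independent mechanisms are at play: the cosection forces $\beta$ to be a multiple of $\k$, while the fact that the surviving pairs are supported over the curve $C$ forces the $\sigma_i$ into low degree.

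First I would apply cosection localisation to the perfect obstruction theory \eqref{Tdef} on the compact fixed locus $P_X^T$, using the cosection $\sigma_\theta$ of its obstruction sheaf $\big(\Ob_X|_{P_X^T}\big)^f$ from \eqref{cosec}. This produces a localised virtual cycle supported on the zero scheme $Z(\sigma_\theta)$ (which, the cosection being $\O$-valued, is exactly its degeneracy locus) and pushing forward to $[P_X^T]^{\vir}$. Consequently every integral \eqref{generalinv} is computed by capping $1/e(N^{\vir})$ and the insertions, restricted to $Z(\sigma_\theta)$, against this localised cycle. By Proposition \ref{deglocinside...}, $Z(\sigma_\theta)$ is empty unless $\beta = d[C]$ for some $d>0$; since $C$ is a canonical divisor we have $[C]=\k$, so $\beta=d\k$. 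Thus if $\beta$ is not a positive multiple of $\k$ the localised cycle, and hence the invariant, vanishes.

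Next, assuming $\beta=d\k$, Proposition \ref{deglocinside...} gives $Z(\sigma_\theta)\subset\bigsqcup_{\lam\,\vdash d\ \mathrm{strict}}P_{\lam C}^T$. On each component the universal sheaf $\FF$ is scheme-theoretically supported on the thickening $\lam C$, which lies inside $W:=\pi^{-1}(C)$ because $I_C^l\subset I_{\lam C}$. Restricted to $Z(\sigma_\theta)\times X$, the class $\ch_{\alpha+2}^T(\FF)$ is therefore represented by a class supported on $Z(\sigma_\theta)\times W$, so the product $\pi_X^*\sigma\cap\ch_{\alpha+2}^T(\FF)$ defining $\tau_\alpha(\sigma)$ depends on $\pi_X^*\sigma$ only through its restriction to $W$. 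As $\sigma$ is pulled back from $S$, that restriction is the pullback of $\sigma|_C$ along $\pi|_W\colon W\to C$, and $\sigma|_C\in H^{\deg\sigma}(C)$ vanishes as soon as $\deg\sigma\ge3$, since $C$ is a (reduced, at most $2$-real-dimensional) curve and so $H^{\ge3}(C)=0$. Hence $\tau_{\alpha_i}(\sigma_i)$ restricts to $0$ on $Z(\sigma_\theta)$ whenever $\sigma_i\in H^{>2}(S)$, and the integral is again zero.

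The only genuinely new input beyond Proposition \ref{deglocinside...} is this final degree count, which is routine. The main subtlety to handle carefully is instead the first step: verifying that cosection localisation is compatible with the $T$-fixed obstruction theory \eqref{Tdef}, and that the non-equivariantly-invertible factor $1/e(N^{\vir})$ may legitimately be capped against the localised cycle living on $Z(\sigma_\theta)$. Given the compactness of $P_X^T$ this is standard, so no essential difficulty remains once Proposition \ref{deglocinside...} is in hand.
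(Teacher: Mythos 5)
Your proof is correct and follows essentially the same route as the paper: Kiem--Li cosection localisation together with Proposition \ref{deglocinside...} disposes of all classes $\beta$ not a positive multiple of $\k$, and the insertions $\tau_{\alpha_i}(\sigma_i)$ with $\deg\sigma_i\ge3$ are killed by a support argument over the localised locus. The only (immaterial) difference is in how that last vanishing is justified: the paper chooses a cycle representative $\gamma\in H_{\le1}(S)$ of the Poincar\'e dual of $\sigma$ disjoint from $C$, whereas you restrict to $\pi^{-1}(C)$ and invoke $H^{\ge3}(C)=0$ --- two equivalent formulations of the same dimension count.
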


\begin{proof}
For $\beta$ not a multiple of $d\k$ then $Z(\sigma_\theta)$ is empty by Proposition \ref{deglocinside...}, so the invariants vanish. 

If $\sigma \in H^{\geq 3}(S)$, we can write $\sigma = [\gamma]$ for some cycle $\gamma \in H_{\leq 1}(S)$ disjoint from $C$. Therefore $\pi_X^{*}\sigma \cap \ch_{\alpha+2}^{T}(\FF)=0$ over the locus of pairs with support $\lam C$, so the insertions $\tau_\alpha(\sigma)$ certainly vanish over $P^T_{\lam C}$ for any strict $\lam \vdash d$. Since the virtual cycle can be cosection localised to this locus, the associated invariants vanish. This completes the proof of Theorem \ref{main0} in the Introduction. 
\end{proof}

\section{Nested Hilbert schemes} \label{sectionneshilb}

We now begin the process of describing $T$-fixed stable pairs --- especially those in the vertical component $P^T_{(d)C}$ of $Z(\sigma_\theta)$ --- more explicitly.

\subsection{$T$-equivariant sheaves on $X$}
Given a $T$-equivariant coherent sheaf $F$ on $X$, its pushdown by $\pi\colon
X\to S$ decomposes into weight spaces:
\beq{1}
\pi_*F\ =\ \bigoplus_iF_i\otimes\t^i,
\eeq
where $F_i$ is $T$-fixed so $F_i\otimes\t^i$ is the summand of weight $i$. For instance
\beq{OX}
\pi_*\O_X\ =\ \bigoplus_{i\ge0}K_S^{-i} \otimes \mathfrak{t}^{-i}.
\eeq
Since $\pi$ is affine, the pushdown loses no information; we can recover the
$\O_X$-module structure on $F$ by describing the action of \eqref{OX} that
\eqref{1} carries. This is generated by the action of the weight $-1$ piece $K_S^{-1}\otimes\t^{-1}$, so we find
that the $\O_X$-module structure is determined by the map
\beq{2}
\bigoplus_iF_i\otimes \mathfrak{t}^i \otimes \big( K_S^{-1} \otimes \mathfrak{t}^{-1} \big) \To\bigoplus_iF_i \otimes \mathfrak{t}^i,
\eeq
which commutes with both the actions of $\O_S$ and $T$. That is, \eqref{2} is a
$T$-equivariant map of $\O_S$-modules. By $T$-equivariance, it is a sum of
maps
\beq{3}
F_i\otimes K_S^{-1}\To F_{i-1}.
\eeq

\subsection{$T$-equivariant pairs on $X$} \label{Tpair}
Having described $T$-equivariant coherent sheaves $F$ on $X$ as graded sheaves
\eqref{1} on $S$ with $T$-equivariant maps \eqref{3}, we can generate a similar
description of $T$-equivariant pairs $(F,s)$ on $X$. Here $s\in H^0(F)^T$ is a
$T$-equivariant section of $F$.

Applying $\pi_*$ to $\O_X\rt{s}F$ gives a graded map between \eqref{OX} and
$$
\xymatrix@C=0pt@R=14pt{
&&&\O_S\dto&\oplus&\big(K_S^{-1} \otimes \mathfrak{t}^{-1} \big) \dto&\oplus&\big(K_S^{-2} \otimes \mathfrak{t}^{-2} \big)\dto&\oplus\ \cdots\ \\
\cdots&F_1\otimes \mathfrak{t} &\oplus&F_0&\oplus& \big(F_{-1} \otimes \mathfrak{t}^{-1} \big) &\oplus& \big(F_{-2} \otimes \mathfrak{t}^{-2} \big)&\oplus\ \cdots,\!}
$$
which commutes with the maps \eqref{3} along the top and bottom rows.
So writing
\beq{pair2}
G_i:=F_{-i}\otimes K^i_S\,,
\eeq
(which is $T$-fixed) we find the data $(F,s)$ on $X$ is equivalent to the
following data of sheaves and commuting maps on $S$:
\beq{pairs2}
\xymatrix@C=14pt@R=14pt{
&\O_S\dto\ar@{=}[r]&\O_S\dto\ar@{=}[r]&\O_S\dto\ar@{=}[r]&\cdots \\ \cdots
G_{-1}\rto&G_0\rto&G_1\rto&G_2\rto&\cdots\ .\!\!\!}
\eeq

\subsection{$T$-equivariant stable pairs in the vertical component} \label{that}
In Section \ref{Tpair} we gave a general description of $T$-equivariant pairs on $X$. Now we restrict attention to $T$-equivariant \emph{stable} pairs $(F,s)$ whose scheme
theoretic support is $\pi^{-1}C$ for some fixed \emph{connected smooth} curve
$C\subset S$. This will lead to a description of the connected component $P^T_{(d)C}$ of $Z(\sigma_\theta)$ of Corollary \ref{vercomp}. We only consider pairs with proper support, which implies that
there is a maximal $d\ge0$ such that $G_{d-1}\ne0$ in the description
\eqref{pair2}. (This is the smallest $d$ such that $F$ is supported on
$dS\subset X$.)

Thus $F$ is pushed forward from $\pi^{-1}(C)$ and $\O_X\rt{s}F$ has finite
cokernel. Thus all of the sheaves $G_i$ in \eqref{pairs2} are supported on $C$, the
vertical maps factor through $\O_S\to\O_C$, and generically on $C$ the induced
maps from $\O_C$ are isomorphisms. It follows in particular that $G_{-i}$ is
0-dimensional for $i>0$ and so vanishes by purity of $F$.

The upshot is that the stable pair is equivalent to a commutative diagram
\beq{penult}
\xymatrix@C=14pt@R=14pt{
\O_C\dto\ar@{=}[r]&\O_C\dto\ar@{=}[r]&\O_C\dto\ar@{=}[r]&\cdots\ar@{=}[r]&\O_C\dto
\\ G_0\rto&G_1\rto&G_2\rto&\cdots\rto&G_{d-1}\!\!}
\eeq
of $\O_C$-modules, with each $G_n$ pure 1-dimensional and each vertical map an
isomorphism away from a finite number of points.

Since $C$ is smooth, it follows that each $G_i$ is a line bundle with section, that the horizontal
maps are all injections, and the diagram is the top two rows of
\beq{final}
\xymatrix@C=14pt@R=14pt{
\O_C\dto\ar@{=}[r]&\O_C\dto\ar@{=}[r]&\O_C\dto\ar@{=}[r]&\cdots\ar@{=}[r]&\O_C\dto
\\ \O(Z_0)\dto\INTO&\O(Z_1)\dto\INTO&\O(Z_2)\dto\INTO&\cdots\
\INTO&\O(Z_{d-1})\!\!\dto \\
\O_{Z_0}(Z_0)\INTO&\O_{Z_1}(Z_1)\INTO&\O_{Z_2}(Z_2)\INTO&\cdots\
\INTO&\O_{Z_{d-1}}(Z_{d-1}).\!\!\!}
\eeq
Here the $Z_i$ are Cartier divisors on $C$, and all columns are the obvious short exact sequences.

\subsection{Stable pairs and the nested Hilbert scheme}
Thus a $T$-equivariant stable pair $(F,s)$ with proper support in $\pi^{-1}(C)$
is equivalent to a chain of divisors
\beq{nesting}
Z_0\subset Z_1\subset Z_2\subset\cdots\subset Z_{d-1}\subset C.
\eeq
Hence it defines a point of the nested Hilbert scheme
$$
C^{[\n]}, \qquad \n=(n_0, \ldots, n_{d-1}),
$$
of length-$n_i$ zero-dimensional subschemes $Z_i$ of $C$ satisfying the nesting
condition \eqref{nesting}. Here
$$
\chi(F)\ =\ \sum_{i\ge0}\chi(F_{-i})\ =\ \sum_{i\ge0}\chi(G_i\otimes
K_S^{-i}|_C)
\ =\ \sum_{i\ge0}\big(\chi(K_S^{-i}|_C)+n_i\big)
$$
determines $|\n|=\sum n_i$. When $C\in|K_S|$ is a canonical
curve, we find that
$$
\chi(F)=\sum_i\big(n_i-(i+1)\k^2\big),
$$
where $\k:=c_1(K_S)$.

Conversely, a point of the nested Hilbert scheme gives a diagram \eqref{final},
which we have noted is equivalent to a $T$-fixed stable pair on $X$ supported
on $\pi^{-1}(C)\cap dS$. Thus we get a set-theoretic isomorphism
\beq{isomorphism}
P^T_{\chi,(d)C}\ =\ \bigsqcup_{\n} C^{[\n]},
\eeq
where the disjoint union is taken over all $\n=(n_0, \ldots, n_{d-1})$ whose
length $|\n|$ satisfies 
\beq{chi/n}
\chi=\sum_i\big(n_i-(i+1)\k^2\big).
\eeq
\begin{proposition} \label{degloc=nested}
The bijection \eqref{isomorphism} is an isomorphism of schemes.
\end{proposition}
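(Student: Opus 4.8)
The plan is to promote the set-theoretic bijection \eqref{isomorphism} to an isomorphism of schemes by exhibiting both sides as fine moduli spaces for the \emph{same} functor, the comparison being the family version of the constructions of Sections \ref{sectionneshilb}--\ref{that}. Since $P_X$ carries the universal pair $(\FF,s)$, its $T$-fixed, support-$(d)C$ locus $P^T_{\chi,(d)C}$ represents the functor sending a $\C$-scheme $B$ to the set of $B$-flat families of $T$-fixed stable pairs on $X$ of support $(d)C$ and Euler characteristic $\chi$; and $\bigsqcup_\n C^{[\n]}$ represents the functor of $B$-flat nested chains of relative effective Cartier divisors $\cZ_0\subset\cdots\subset\cZ_{d-1}\subset C\times B$ with $\deg\cZ_i=n_i$. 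It therefore suffices to produce mutually inverse natural transformations between these two functors.

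In the forward direction, let $(\FF_B,s_B)$ be such a family. Because $\pi\colon X\to S$ is affine and $T$-equivariant, $(\pi\times\id_B)_*\FF_B$ splits canonically into $T$-weight spaces exactly as in \eqref{1}, its $\O_X$-module structure is recorded over $B$ by the maps \eqref{3}, and the section by the diagram \eqref{pairs2}. The hypotheses (support in $\pi^{-1}(C)$, properness, purity, stability) hold in every fibre by Sections \ref{Tpair}--\ref{that}, so the resulting sheaves $\mathcal G_i$ over $C\times B$ restrict on each fibre to the line bundles $\O(Z_i)$ of \eqref{final}; their tautological sections cut out relative effective Cartier divisors $\cZ_i$, and the injections of \eqref{final} give the nesting. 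This assigns to $(\FF_B,s_B)$ a $B$-point of $\bigsqcup_\n C^{[\n]}$, functorially in $B$. Conversely, given a nested chain of relative divisors, run the diagram \eqref{final} in families to produce the $\mathcal G_i$ with section $1$ and inclusions, and reassemble them via the equivalence of Section \ref{Tpair} (using the relabeling \eqref{pair2}) into a $T$-equivariant pair $(\FF_B,s_B)$ on $X\times B$. These two constructions are manifestly inverse at the level of the data \eqref{final}, so the functors are naturally isomorphic; as both schemes are fine moduli the induced isomorphism of schemes restricts on $\C$-points to \eqref{isomorphism}.

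The only non-formal step is $B$-\emph{flatness} and base-change compatibility. In the forward direction flatness is essentially automatic: $\pi$ is affine, so $(\pi\times\id_B)_*$ is exact and preserves $B$-flatness and base change, whence each weight piece, and hence each $\mathcal G_i$, is $B$-flat; the fibrewise analysis of Section \ref{that} identifies $\mathcal G_i|_{\text{fibre}}=\O(Z_i)$ as a line bundle, so $\mathcal G_i$ is a flat family of line bundles whose section cuts out a genuine relative Cartier divisor $\cZ_i$ (of locally constant degree $n_i$, since $\chi$ is fixed and $\deg$ is locally constant in flat families).

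The substantive point, and the main obstacle, is the backward direction: one must verify that the sheaf $\FF_B$ assembled from \eqref{final} is $B$-flat with fibrewise pure, one-dimensional $\mathcal G_i$. This I expect to follow fibrewise from the smoothness of $C$ --- which makes each $\O(\cZ_i)$ a line bundle and each horizontal map of \eqref{final} a fibrewise injection with Cartier cokernel --- together with the constancy of $\n$ on each component, the local criterion for flatness then upgrading the fibrewise statements to honest $B$-flatness. Once flatness is in hand, injectivity of the nesting maps, the stability of the resulting pair, and the support condition are fibrewise statements already established in Section \ref{that}, and no further obstruction arises.
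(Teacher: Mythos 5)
Your proposal is correct and follows essentially the same route as the paper: promote the weight-space/pushforward constructions of Section \ref{sectionneshilb} to flat families over a base $B$, using affineness of $\pi$ to preserve and detect $B$-flatness and the openness of local freeness to upgrade the fibrewise line bundles to relative Cartier divisors, then produce mutually inverse classifying morphisms. The one inaccuracy is your diagnosis of the backward direction as the "main obstacle": there flatness is immediate, since the universal $\cZ_i$ are relative Cartier divisors, so each $\O(\cZ_i)\otimes K_S^{-i}|_C\otimes\t^{-i}$ is a line bundle on $C^{[\n]}\times C$ (hence flat over $C^{[\n]}$), and $\pi$ affine means the reassembled $\FF$ is flat if and only if its pushforward is --- no local criterion for flatness is needed.
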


\begin{proof}
We simply notice that the constructions of this Section work equally well for $T$-equivariant sheaves and stable pairs on $X\times B$, flat over any base $B$.

Pushing down by the affine map $\pi\colon X\times B\to S\times B$ gives a
graded sheaf $\bigoplus_iF_i$ on $S\times B$. It is flat over $B$, therefore so
are all its weight spaces $F_i$. The original sheaf $F$ on $X\times B$ can be
reconstructed from the maps \eqref{3}.
Therefore a $T$-equivariant stable pair $(F,s)$ on $X\times B$, flat over $B$,
is equivalent to the data \eqref{pairs2} with each $G_i$ flat over $B$.

When $F$ is supported on $\pi^{-1}(C\times B)$, with $C$ a smooth connected
curve in $S$, we showed that each $G_i$ is a line bundle on any closed fibre
$C\times\{b\}$ (where $b\in B$). Being locally free is an open condition on
sheaves, so this shows that each $G_i$ is a line bundle on $C\times B$.
Together with its nonzero section \eqref{pairs2} we find it defines a divisor
$Z_i\subset C\times B$, flat over $B$.

Thus we get the diagram \eqref{final} of flat sheaves and nested divisors over
$B$.
This defines a classifying morphism $B\to\bigsqcup_{\n} C^{[\n]}$.

Conversely, the universal family on $C^{[\n]}$ defines a diagram
\eqref{final}, equivalent to a $T$-equivariant stable pair $(F,s)$ on $X\times
B$ supported on $$(\pi^{-1}(C)\cap dS)\times B$$ and flat over $B$. This
defines the inverse classifying map $\bigsqcup_{\n} C^{[\n]} \to B$.
\end{proof}

\subsection{The dual description}
In this Section we give an explicit description of the pairs constructed in the last Section in terms of the geometry of the vertical thickening $(d)C\subset dS\subset X$. For clarity of exposition we work at a single point of moduli space, though just as in the last Section there is no difficulty in having everything vary in a flat family over a base $B$.

So we fix a point of $C^{[\n]}$, i.e. an increasing flag of effective divisors
$$
Z_0\subset Z_1\subset Z_2\subset\cdots\subset Z_{d-1}\subset C \qquad
$$
as in \eqref{nesting}. Setting $D_i:=Z_{d-1}-Z_i$ gives a dual \emph{decreasing} flag of effective divisors
\beq{DDd}
\qquad\quad D_0\supset D_1\supset D_2\supset\cdots\supset D_{d-2}, \qquad D_{d-1}=\varnothing,
\eeq
in $C$. These fit together to define a \!\emph{Weil divisor}\footnote{$D\subset (d)C$ is Cartier if and only if all the $D_i$ are empty.} $$D\subset (d)C$$ in the way described in Section \ref{Tpair}. That is, take $G_i=\O_{D_i}$ in \eqref{pairs2} and use the following example of the diagram \eqref{penult}, $$
\xymatrix@C=16pt@R=14pt{
\O_S\dto<-.5ex>\ar@{=}[r]&\O_S\dto<-.5ex>\ar@{=}[r]&\O_S\dto<-.5ex>\ar@{=}[r]&
\cdots\ar@{=}[r]&\O_S\dto<-.5ex>\ar@{=}[r]&\O_S\dto<-.3ex> \\
\O_{D_0}\!\!\rto&\O_{D_1}\!\!\rto&\O_{D_2}\!\!\rto&\cdots\rto&\O_{D_{d-2}}\!\!\rto&0.\!}
$$
All arrows are the obvious restriction maps. By the construction of Section \ref{Tpair} this is equivalent to a $T$-equivariant pair $\O_X\to G$ with no cokernel, so $G$ must be a structure sheaf $\O_D$ of a subscheme $D\subset (d)C$ such that $\pi_*\O_D$ is $$\bigoplus_{i=0}^{d-2}\O_{D_i}\otimes K_S^{-i} \otimes \t^{-i}.$$

Now $\pi^*Z_{d-1}$ is a Cartier divisor on $(d)C$, defining a line bundle $\O_{(d)C}(\pi^*Z_{d-1})$ with a canonical section vanishing on $\pi^*Z_{d-1}\supset D$. It therefore factors through the ideal sheaf $I_D$ of $D\subset (d)C$, defining a unique section
\beq{this}
\O_X\Rt{s}\O_{(d)C}(\pi^*Z_{d-1})\otimes I_D.
\eeq
This defines a $T$-equivariant stable pair.

\begin{proposition} The isomorphism of Proposition \ref{degloc=nested} takes the nested flag of subschemes $Z_0\subset Z_1\subset Z_2\subset\cdots\subset Z_{d-1}\subset C$ to the stable pair \eqref{this}.
\end{proposition}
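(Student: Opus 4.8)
The plan is to exploit the equivalence established in Section \ref{Tpair}: a $T$-equivariant pair on $X$ with proper support in $\pi^{-1}(C)\cap dS$ is completely determined by its pushforward data, namely the weight spaces $G_i=F_{-i}\otimes K_S^i$, the maps \eqref{3} encoding the $\O_X$-module structure, and the components of the section. By the proof of Proposition \ref{degloc=nested}, the isomorphism sends the flag $Z_0\subset\cdots\subset Z_{d-1}$ to the pair whose associated data is exactly diagram \eqref{final}, with $G_i=\O_C(Z_i)$, the horizontal maps the natural inclusions, and the vertical maps $\O_C\to G_i$ the canonical sections. It therefore suffices to compute the pushforward of the sheaf $F:=\O_{(d)C}(\pi^*Z_{d-1})\otimes I_D$ of \eqref{this} and verify that its weight decomposition, module structure and section reproduce \eqref{final} weight by weight.

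First I would pin down the weight spaces. Restricting \eqref{OX} to $C$ and truncating at fibre order $d$ gives $\pi_*\O_{(d)C}=\bigoplus_{i=0}^{d-1}K_S^{-i}|_C\otimes\t^{-i}$. Applying $\pi_*$ (exact, as $\pi$ is affine) to $0\to I_D\to\O_{(d)C}\to\O_D\to0$ and using $\pi_*\O_D=\bigoplus_{i=0}^{d-2}\O_{D_i}\otimes K_S^{-i}\otimes\t^{-i}$, the weight-$(-i)$ part of $I_D$ is the kernel of the restriction $K_S^{-i}|_C\to\O_{D_i}\otimes K_S^{-i}$, namely $\O_C(-D_i)\otimes K_S^{-i}$ (where $D_{d-1}=\varnothing$). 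Twisting by $\O_C(Z_{d-1})$ and using $Z_{d-1}-D_i=Z_i$ yields $(\pi_*F)_{-i}=\O_C(Z_i)\otimes K_S^{-i}\otimes\t^{-i}$, so that $G_i=F_{-i}\otimes K_S^i=\O_C(Z_i)$, matching the middle row of \eqref{final}.

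Next I would check the module maps \eqref{3} and the section. Multiplication by the generator of $\O(-S)=K_S^{-1}\otimes\t^{-1}$ acts on $\O_{(d)C}(\pi^*Z_{d-1})$ as the canonical isomorphism $K_S^{-i}|_C\otimes\O_C(Z_{d-1})\xrightarrow{\ \sim\ }K_S^{-(i+1)}|_C\otimes\O_C(Z_{d-1})$ for $i\le d-2$, and as $0$ for $i=d-1$. Restricting this to the subsheaf $F$ and identifying both $G_i$ and $G_{i+1}$ with their images $\O_C(Z_i),\ \O_C(Z_{i+1})\subset\O_C(Z_{d-1})$, the induced map $G_i\to G_{i+1}$ is precisely the inclusion $\O_C(Z_i)\hookrightarrow\O_C(Z_{i+1})$ coming from $Z_i\subset Z_{i+1}$ --- i.e. the horizontal arrows of \eqref{final}. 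Finally, the section \eqref{this} is the canonical section of $\O_{(d)C}(\pi^*Z_{d-1})$ vanishing on $\pi^*Z_{d-1}\supset D$, factored through $I_D$; in weight $-i$ it pushes to the map $\O_C\to\O_C(Z_i)$ cutting out $Z_i$, which is the $i$-th vertical arrow of \eqref{final}. Matching all three pieces of data and invoking the equivalence of Section \ref{Tpair} identifies \eqref{this} with the pair attached to $Z_\bullet$ by Proposition \ref{degloc=nested}.

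The step I expect to be the main obstacle is the bookkeeping in the module map: one must verify that restricting the fibre-multiplication isomorphism on the twisted structure sheaf to the ideal $I_D$ genuinely produces the nesting inclusion $\O_C(Z_i)\hookrightarrow\O_C(Z_{i+1})$, and not a shifted or rescaled variant. This is exactly the point at which the passage from the decreasing flag via $D_i=Z_{d-1}-Z_i$ must be reconciled with the downward direction of the $\O_X$-action, and where the specific twist by $\pi^*Z_{d-1}$ (rather than any other line bundle trivialising the top weight) is forced in order to reproduce $\O_C(Z_i)$ in each weight.
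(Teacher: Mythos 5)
Your proposal is correct and follows essentially the same route as the paper's own proof: push the pair \eqref{this} down to the surface, identify the weight spaces as $G_i=\O_C(Z_{d-1})\otimes I_{D_i}\cong\O_C(Z_i)$ (the paper does this in one line; you derive it via the sequence $0\to I_D\to\O_{(d)C}\to\O_D\to0$ and the projection formula), and match the resulting diagram with \eqref{final}. The only difference is one of detail: your explicit verification that the $\O_X$-module maps restrict to the nesting inclusions $\O_C(Z_i)\hookrightarrow\O_C(Z_{i+1})$ and that the section pushes down to the canonical sections cutting out the $Z_i$ is exactly what the paper compresses into the phrase ``with all maps the canonical ones.''
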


\begin{proof}
By Section \ref{that}, the stable pair \eqref{this} is described by a diagram of the form \eqref{penult}. By pushing down \eqref{this} we find that
$$
G_i\ =\ \O_C(Z_{d-1})\otimes I_{D_i},
$$
which by the definition of $D_i$ is
$$
\O_C(Z_{d-1}-D_i)\ \cong\ \O_C(Z_i).
$$
Therefore for the pair \eqref{this}, the diagram \eqref{penult} becomes
$$
\xymatrix@C=14pt@R=14pt{
\O_C\dto\ar@{=}[r]&\O_C\dto\ar@{=}[r]&\O_C\dto\ar@{=}[r]&\cdots\ar@{=}[r]&\O_C\dto
\\ \O(Z_0)\INTO&\O(Z_1)\INTO&\O(Z_2)\INTO&\cdots\
\INTO&\O(Z_{d-1}),\!\!}
$$
with all maps the canonical ones. But this is precisely the diagram \eqref{final} corresponding to the flag $Z_0\subset Z_1\subset Z_2\subset\cdots\subset Z_{d-1}\subset C$ from which we construct the $T$-equivariant stable pair via the isomorphism \eqref{isomorphism}.
\end{proof}

\begin{remark} This description of stable pairs in terms of Hilbert schemes parameterising either the subschemes $Z_i$ \eqref{nesting} or the dual subschemes $D_i$ \eqref{DDd} is related to, but \emph{different from}, the description \cite[Appendix B.2]{PT3} of stable pairs on surfaces in terms of relative Hilbert schemes. The latter description is dual to the one above in a different way, involving the (derived dual) of the sheaf $F$ and complex $I\udot=\{\O_S\to F\}$. 
\end{remark}

\section{Localised virtual cycle} \label{virtsec}


In Corollary \ref{vercomp} we showed that the contribution to $[P_{d\k}^{T}]^{\vir}$ of its vertical component is the push forward of a cycle on $P_{(d)C}^{T}\cong C^{[\n]}$. We denote this Kiem-Li \cite{KL3} cosection localised virtual cycle by
\beq{KLvir}
\big[P^T_{\ver}\big]^{\vir}\in A_*(C^{[\n]}).
\eeq
In this Section we compute it. \medskip

Denote by $$\Hk := \Hilb_{\k}(S)$$ the Hilbert scheme of effective divisors in class $\k=c_1(K_S)$ on $S$. A result of H.-l.~Chang and Y.-H.~Kiem \cite{CK} simplifies our life considerably.

\begin{theorem} \label{ChKi}
Assume that $S$ has a smooth irreducible canonical divisor $C$. Then we may assume $C$ defines a \emph{smooth point} of $\Hk$ at which $$\dim_{\{C\}\!}\Hk\equiv\chi(\O_S) \ \mod 2.$$
\end{theorem}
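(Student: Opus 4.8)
The plan is to separate the statement into its two assertions: that, after possibly deforming $S$, the canonical curve $C$ is a \emph{smooth} point of $\Hk=\Hilb_{\k}(S)$, and that the local dimension there has the stated parity. The deformation-theoretic input is standard: the Hilbert scheme of divisors has Zariski tangent space $H^0(N_{C/S})$ and obstruction space $H^1(N_{C/S})$ at $[C]$, where $N_{C/S}=\O_C(C)=K_S|_C$ since $\O_S(C)\cong K_S$. Adjunction gives $N_{C/S}^{\otimes2}=K_C$, so $N_{C/S}$ is a theta characteristic of $C$, and Serre duality yields $h^1(N_{C/S})=h^0(N_{C/S})=:N$. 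Thus $[C]$ carries an obstruction space of the same dimension as its tangent space and is typically a genuinely obstructed (singular) point; this is precisely why a deformation is needed.

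First I would dispose of the parity, which holds on any surface with a smooth irreducible canonical divisor and needs no deformation. Writing $p_g=h^0(K_S)$ and $q=h^1(\O_S)$, the long exact sequence of $0\to\O_S\to K_S\to N_{C/S}\to0$ gives
$$N=h^0(N_{C/S})=(p_g-1)+\dim\ker\big(\,\cup\theta\colon H^1(\O_S)\to H^1(K_S)\,\big).$$
Under the Serre duality identification $H^1(K_S)\cong H^1(\O_S)^\vee$, multiplication by $\theta$ becomes the bilinear form $(a,b)\mapsto\int_S\theta\cup a\cup b$ on $H^1(\O_S)=H^{0,1}(S)$. Since classes in $H^1$ anticommute under cup product this form is skew-symmetric, hence of even rank, so $\dim\ker(\cup\theta)\equiv q\pmod2$. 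Therefore $N\equiv(p_g-1)+q\equiv1-q+p_g=\chi(\O_S)\pmod2$, which is exactly the claim at a smooth point, where $\dim_{\{C\}}\Hk=N$.

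It remains to arrange smoothness. Here I would use deformation invariance of the stable pair invariants (the footnote in the Introduction) to replace $S$ by a general deformation, and invoke the theorem of Chang--Kiem that for a generic deformation of the complex structure the canonical curve becomes a smooth point of $\Hk$. The heuristic is that the obstruction is governed by the skew pairing $\cup\theta$ together with the jumping of $h^0$ along the effective locus inside $\Pic^{\k}(S)$ (which is smooth of dimension $q$ in characteristic zero); deforming $S$ makes these data as nondegenerate as the theta-characteristic structure permits, placing $[C]$ on a smooth stratum. Since the parity computation of the previous paragraph applies verbatim to the deformed surface, and $\chi(\O_S)$ is a deformation invariant, the dimension at the resulting smooth point still satisfies the claimed congruence.

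The main obstacle is the smoothness claim, not the parity. The parity is a direct cohomological computation, but making $[C]$ a smooth point genuinely requires the deformation argument of Chang--Kiem: as the skew-symmetry of $\cup\theta$ already shows, Bloch's semiregularity map $H^1(N_{C/S})\to H^2(\O_S)$ need not be injective (a skew form cannot be surjective when degenerate), so one cannot expect $[C]$ to be unobstructed on $S$ itself, and the reduction to a better model is essential.
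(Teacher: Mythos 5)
Your parity computation is correct and is in fact more self-contained than the paper's treatment: the paper simply cites \cite[Proposition 4.2]{CK} for the congruence $\dim_{\{C\}}\Hk\equiv\chi(\O_S)\bmod 2$, whereas you derive it from the long exact sequence of $0\to\O_S\to K_S\to K_S|_C\to 0$ together with the skew-symmetry of the pairing $(a,b)\mapsto\int_S\theta\cup a\cup b$ on $H^1(\O_S)$. That argument stands on its own, and usefully it applies to an arbitrary canonical divisor, not just a smooth irreducible one.

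The smoothness half, however, has a genuine gap, in two respects. First, you misquote the input: Chang--Kiem's Proposition 4.2 is not a statement about generic deformations of the complex structure of $S$; it is an existence statement on $S$ itself, proved via Green--Lazarsfeld, that \emph{some} canonical divisor --- possibly singular, reducible or non-reduced --- is a smooth point of $\Hk$. No deformation of $S$ appears, and none is needed. Second, and more importantly, even granting your version you never ensure that the divisor at which $\Hk$ is smooth is also a smooth irreducible \emph{curve}; when $p_g(S)>1$ there is no single ``canonical curve'', and the theorem requires one member of $|K_S|$ satisfying both conditions simultaneously. This is exactly the step of the paper's proof that is missing from yours: the smooth locus of $\Hk$ meets $|K_S|$ in a Zariski open subset, nonempty by \cite{CK}; the smooth irreducible members form another Zariski open subset, nonempty by hypothesis; and since $|K_S|$ is a projective space, hence irreducible, these two open subsets intersect, and one takes $C$ in the intersection. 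Without this intersection argument (or some substitute for it), the phrase ``we may assume'' has no justification, and the deformation-invariance footnote does not repair this --- replacing $S$ by a deformation only reproduces the same problem on the new surface.
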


\begin{proof}
Chang-Kiem \cite[Proposition 4.2]{CK} use a result of Green-Lazarsfeld to prove that there exists a canonical divisor at which $\Hk$ is smooth. It follows that the smooth locus of $\Hk$ intersects $|K_S|$ in a nonempty Zariski open subset.

The smooth irreducible canonical divisors form another Zariski open subset of $|K_S|$, and our assumption implies it is also nonempty. Since $|K_S|$ is a projective space it is in particular irreducible, so the two Zariski open subsets have nonempty intersection. Choosing $C$ in this intersection gives the result.

Finally the parity of $\dim\Hk$ at $C$ is also given in \cite[Proposition 4.2]{CK}.
\end{proof}

Using this result, we will find we are in the following situation.

Consider $M$ a projective scheme with perfect obstruction theory, obstruction sheaf $\Ob$ and cosection vanishing on $Z\stackrel\iota\Into M$:
$$\Ob\Rt{\sigma}\O_M\To\iota_*\O_Z\To0.$$
Suppose that $Z$ is smooth, and that $M$ is smooth in a neighbourhood of $Z$. It is then clear what the virtual cycle of $M$ should be. Away from $Z$ the surjection $\Ob\to\O_M$ ensures that it is zero. Fulton-MacPherson intersection theory allows us to write it as the pushforward of a class on $Z$ which, by smoothness and the locally freeness of $\Ob$ near $Z$, should calculate $c_{\mathrm{top}}(\Ob)$. To find it we use the exact sequence
\beq{exsq}
0\To T_Z\To T_M|_Z\Rt{d\sigma|_Z}\Ob^*\!|_Z\To Q\To0
\eeq
on $Z$. Here $Q\cong\Ob^*\!|_Z\big/N_{Z/M}$ is defined to be the cokernel of $d\sigma|_Z$; this is locally free by the smoothness of $Z\subset M$. Excess intersection theory says that its top Chern class on $Z$, pushed forward to $M$, represents the top Chern class of $\Ob^*$:
\beq{Qob}
\iota_*c_{\mathrm{top}}(Q)=c_{\mathrm{top}}(\Ob^*).
\eeq
Let $m$ denote the dimension of $M$ in the neighbourhood of $Z$, and let $vd$ be the virtual dimension of the obstruction theory. Therefore $r:=\rk(\Ob\!|_Z)$ is $m-vd$. Finally let $c$ denote the codimension of $Z\subset M$, so that $\rk(Q)=r-c=m-vd-c$.

Since \eqref{Qob} differs from $c_{\mathrm{top}}(\Ob)$ only by the sign $(-1)^r$, and we expect the virtual cycle to be
$$
(-1)^r\iota_*\big(c_{\mathrm{top}}(Q)\big)\,=\ (-1)^{m-vd}\iota_*\big(c_{m-vd-c}(Q)\big)\,=\ (-1)^c\iota_*\big(c_{m-vd-c}(Q^*)\big).
$$
By \eqref{exsq} this is
$$
(-1)^c\big[\iota_*c(Q^*)\big]_{vd}\ =\ (-1)^{c\,}\iota_*\big[c(\Ob\!|_Z)s(N^*_{Z/M})\big]_{vd}\,,
$$
where $c(\,\cdot\,)$ and $s(\,\cdot\,)$ denote the \emph{total} Chern and Segre classes respectively. Unsurprisingly, the formulation of Kiem-Li gives precisely this answer.

\begin{proposition} \label{gencaseZM}
In the above situation, Kiem and Li's localised virtual cycle of $M$ is the class in $A_{vd}(Z)$ given by the $vd$-dimensional part of $$(-1)^c\big(c(\Ob\!|_Z)s(N^*_{Z/M})\big)\cap[Z].$$
\end{proposition}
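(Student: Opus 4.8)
The plan is to verify that Kiem--Li's cosection-localised virtual cycle agrees with the explicit excess-intersection class assembled in the preceding discussion. The claim is essentially a matching exercise: the paragraphs before the proposition already carry out the Fulton--MacPherson computation heuristically, so the real content is to invoke the general construction of \cite{KL3} and check that, under the simplifying hypotheses (namely $Z$ smooth and $M$ smooth in a neighbourhood of $Z$), it reduces to the stated formula $(-1)^c\big(c(\Ob|_Z)s(N^*_{Z/M})\big)\cap[Z]$ in dimension $vd$.

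\textbf{The key steps.}

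First I would recall the Kiem--Li localised Gysin construction: given the cosection $\sigma\colon\Ob\to\O_M$ vanishing on $Z$, their virtual cycle lives in $A_{vd}(Z)$ and is built from the cone $C_{Z}$ of $M$ together with the cosection data. Under our hypotheses $M$ is smooth near $Z$ of dimension $m$, so the intrinsic normal cone is just the (shifted) tangent complex and the obstruction sheaf $\Ob|_Z$ is locally free of rank $r=m-vd$. Second, I would use the fundamental exact sequence \eqref{exsq} on $Z$, which expresses $d\sigma|_Z\colon T_M|_Z\to\Ob^*|_Z$ with kernel $T_Z$ and locally free cokernel $Q$ of rank $r-c$. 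Third, the localised cycle should be read off as the refined top Chern class of the cosection-twisted obstruction bundle; by the excess-intersection formula \eqref{Qob}, this is $(-1)^r\iota_*c_{\mathrm{top}}(Q)$. Fourth, I would convert $c_{\mathrm{top}}(Q)$ into the Segre-class form: from \eqref{exsq} one has $c(Q)=c(\Ob^*|_Z)\big/c(N_{Z/M})$, so taking the $(r-c)$-dimensional piece, dualising, and tracking the sign $(-1)^c$ against $(-1)^r$ yields exactly $(-1)^c\big[c(\Ob|_Z)\,s(N^*_{Z/M})\big]_{vd}$. Finally I would confirm the degree bookkeeping: $\rk Q=m-vd-c$, so $c_{m-vd-c}(Q)$ pushed to $Z$ lands in dimension $vd$, matching the asserted cycle.

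\textbf{The main obstacle.}

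The genuinely delicate point is not the Chern-class algebra, which is routine Segre-class manipulation via \eqref{exsq}, but rather the \emph{identification} of the abstract Kiem--Li cycle with the naive excess-intersection class. Kiem--Li's definition proceeds through a localised cone and a cosection-localised Gysin pullback that a priori sees the full (possibly singular) geometry of $M$, whereas our computation exploits smoothness of $M$ near $Z$ and local freeness of $\Ob|_Z$ to reduce everything to ordinary intersection theory on a smooth ambient space. So the hard part will be to argue that these smoothness hypotheses let one replace the localised Gysin map by honest excess intersection --- equivalently, that the support of the cone is concentrated on $Z$ where $\Ob$ is a bundle, so that \cite[Theorem/Definition]{KL3} collapses to the classical formula. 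I would handle this either by citing the compatibility of the Kiem--Li construction with the ordinary virtual cycle in the locally free case, or by a direct local argument that the cosection $\sigma$ makes the virtual cycle vanish on $M\setminus Z$ (because $\Ob\to\O_M$ is surjective there) and computes $c_{\mathrm{top}}$ of the bundle $\Ob$ on $Z$ via the cokernel $Q$. Once that reduction is in hand, the signs and Segre-class identities follow mechanically, and the proposition is immediate.
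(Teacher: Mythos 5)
There is a genuine gap, and it sits exactly where you located it. Your steps one through five reproduce the heuristic discussion the paper gives \emph{before} the proposition (the sequence \eqref{exsq}, the cokernel $Q$, the identity \eqref{Qob}, the sign bookkeeping); that discussion derives what the localised cycle \emph{ought} to be, not what Kiem--Li's construction actually produces. The identification you defer to ``citing compatibility'' or ``a direct local argument'' is the entire content of the proposition, and neither fallback works as stated. Compatibility with the ordinary virtual cycle only says that the Kiem--Li class pushes forward to $[M]^{\vir}$ in $A_{vd}(M)$; since $\iota_*\colon A_{vd}(Z)\to A_{vd}(M)$ is not injective, this cannot pin down a class in $A_{vd}(Z)$, which is where the proposition asserts an equality (and where all of the paper's subsequent integrals are performed). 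Your second fallback --- that the cosection kills the cycle off $Z$ and ``computes $c_{\mathrm{top}}$ of $\Ob$ via $Q$'' --- restates the expected answer rather than proving it; likewise the claim that ``the support of the cone is concentrated on $Z$'' is not correct: the intrinsic normal cone lives over all of $M$, and it is Kiem--Li's \emph{localised Gysin map}, not the cone itself, that concentrates the intersection on $Z$.

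What a complete proof requires --- and what the paper does --- is to unwind Kiem--Li's recipe in this smooth situation. Let $p\colon\Bl_ZM\to M$ be the blow-up along $Z$ with exceptional divisor $E$. The pulled-back cosection becomes a surjection onto $\O(-E)$, giving an exact sequence $0\to G\to p^*\Ob\to\O(-E)\to0$ with $G$ a bundle of rank $r-1$, and by definition the localised class is $-(p|_E)_{*\,}c_{\mathrm{top}}(G|_E)$ (intersect the zero section of $G$ with itself, then with $-E$, and push down to $Z$). Since $p|_E\colon E=\PP(N_{Z/M})\to Z$ is a projective bundle, one has $c(G|_E)=p^*c(\Ob\!|_Z)\,s(\O_E(-E))$, and the projection formula together with $(p|_E)_{*\,}s(\O_{\PP(N_{Z/M})}(1))=(-1)^{c-1}s(N^*_{Z/M})$ yields exactly $(-1)^c\big[c(\Ob\!|_Z)s(N^*_{Z/M})\big]_{vd}$. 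This blow-up computation is the step missing from your proposal: without it you have re-derived the answer one expects from excess intersection, but not shown that the cosection-localised class of \cite{KL3} equals it.
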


\begin{proof} 
In our situation Kiem and Li's recipe for their localised class is the following. Let
$$\xymatrix@=18pt{
E\subset\Bl_ZM\quad \ar[d]^p \\ M}
$$
be the blow up of $M$ in $Z$ with exceptional divisor $E$. Then the pullback of the cosection has zero locus $E$, giving an exact sequence
$$
0\To G\To p^*\Ob\Rt{p^*\sigma}\O(-E)\To0
$$
for some vector bundle $G$ of rank $g=r-1=m-vd-1$. Kiem and Li tell us to intersect the zero section of $G$ with itself and then with $-E$, and push the result down to $Z$. Since $p|_E\colon E\to Z$ is the projective bundle $\PP(N_{Z/M})\to Z$ of relative dimension $c-1$, this gives
\beqa
-(p|_E)\_{*\,}c_{\mathrm{top}}(G|_E) &=& -\Big[(p|_E)\_{*\,}c(G|_E)\Big]_{m-1-g} \\
&=& -\Big[(p|_E)\_{*}\big(p^*c(\Ob\!|_Z)s(\O_E(-E))\big)\Big]_{vd} \\
&=& -\Big[c(\Ob\!|_Z)\cdot(p|_E)\_{*\,}s(\O_{\PP(N_{Z/M})}(1))\Big]_{vd} \\ &=& -\Big[c(\Ob\!|_Z)\cdot(-1)^{c-1}s(N^*_{Z/M})\Big]_{vd}\ ,
\eeqa
which gives the required result.
\end{proof}

We can apply this to describe the virtual cycle $[P_{\ver}^{T}]^{\vir}$ \eqref{KLvir} as follows. Recall that the zero locus of cosection \eqref{cosec} is
$$
\bigsqcup_{\n} C^{[\n]}\,,
$$
where the sum is over all $\n$ satisfying \eqref{chi/n}. Note that $n_{d-1}$ is the length $l(Z_{d-1})$ of the last divisor in the flag \eqref{nesting} --- i.e. the dimension of the nested Hilbert scheme $C^{[\n]}$.

\begin{corollary} \label{virtualcycle}
Under the assumptions of Theorem \ref{ChKi}, the Kiem-Li cosection-localised virtual cycle of the connected component $C^{[\n]}$ of $P_{(d)C}^{T}$ is
\beq{530}
(-1)^{\chi(\O_S)}\cdot c_{n\_{d\!-\!1\!}-vd}\Big(\!\Ob\!\big|\_{C^{[\n]}}\!\Big)\ \in\, A_{vd}(C^{[\n]}).
\eeq
Therefore $[P_{\ver}^{T}]^{\vir}$ is the sum of (the pushforwards of) these classes over all nonnegative integers $n_0 \leq \cdots \leq n_{d-1}$ satisfying
\beq{neqn2}
\sum_{i=0}^{d-1}(n_i - (i+1)\k^2) = \chi.
\eeq
\end{corollary}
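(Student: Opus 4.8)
The plan is to realise Corollary~\ref{virtualcycle} as a direct application of Proposition~\ref{gencaseZM}, with the ambient space $M$ a neighbourhood of the component $Z:=C^{[\n]}$ inside $P_\chi(X,d\k)^T$, the sheaf $\Ob$ its $T$-fixed obstruction sheaf, and the cosection $\sigma_\theta$ of \eqref{cosec}. Since the Kiem--Li class is computed one connected component of the cosection locus at a time, and the strict partitions $\lam$ of length $\ge2$ give components of $Z(\sigma_\theta)$ disjoint from $C^{[\n]}$, it is enough to analyse $\sigma_\theta$ near $Z$. Thus the whole problem is to verify that, under the hypotheses of Theorem~\ref{ChKi}, we are genuinely in the smooth situation of Proposition~\ref{gencaseZM}, and then to read off the three ingredients: the codimension $c$, the normal bundle $N_{Z/M}$, and $\Ob|_Z$.

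First I would note that $Z=C^{[\n]}$ is smooth of dimension $n_{d-1}=l(Z_{d-1})$, since the nested Hilbert scheme of points on a smooth curve is smooth. The heart of the argument, and the step I expect to be the main obstacle, is the local structure of $M$ along $Z$. Here I would use that the $T$-weight data of the support (Section~\ref{sectionneshilb}) is discrete and locally constant, so the partition $\lam=(d)$ cannot change under small deformation: every nearby $T$-fixed pair is again vertical, supported on $(d)C'$ for a canonical divisor $C'$ close to $C$ in $\Hk=\Hilb_\k(S)$. This exhibits a neighbourhood of $Z$ in $M$ as the relative nested Hilbert scheme of the universal curve over a neighbourhood of $[C]\in\Hk$, with $Z$ the fibre over $[C]$. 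Theorem~\ref{ChKi} then supplies exactly what is needed: $\Hk$ is smooth at $[C]$, so $M$ is smooth near $Z$ of dimension $n_{d-1}+\dim_{\{C\}}\Hk$, giving codimension $c=\dim_{\{C\}}\Hk$; and $\dim_{\{C\}}\Hk\equiv\chi(\O_S)\bmod 2$. As the fibre of a family over the smooth base $\Hk$, the normal bundle is $N_{Z/M}\cong p^*T_{[C]}\Hk$ with $p\colon Z\to\{[C]\}$, hence \emph{trivial} of rank $c$. Finally, because $v_\theta$ vanishes only over $C$, the cosection vanishes on this neighbourhood exactly along the fibre $Z$, so the hypotheses of Proposition~\ref{gencaseZM} hold.

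With these identifications the computation collapses. Triviality of $N_{Z/M}$ gives $s(N^*_{Z/M})=1$, so Proposition~\ref{gencaseZM} returns the $vd$-dimensional part of $(-1)^c\,c(\Ob|_Z)$; as $\dim Z=n_{d-1}$ this is $(-1)^c\,c_{n_{d-1}-vd}(\Ob|_{C^{[\n]}})$, and the parity $c\equiv\chi(\O_S)\bmod 2$ turns the sign into $(-1)^{\chi(\O_S)}$, giving the class \eqref{530}. For the last assertion I would invoke the scheme isomorphism \eqref{isomorphism}, which writes the full vertical fixed locus $P^T_{(d)C}$ as the disjoint union of the $C^{[\n]}$ over all $\n$ with $|\n|$ fixed by \eqref{chi/n}; summing the pushforwards of the classes just computed, and rewriting \eqref{chi/n} as $\sum_{i=0}^{d-1}(n_i-(i+1)\k^2)=\chi$, yields \eqref{neqn2} and finishes the proof.
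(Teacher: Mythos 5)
Your proposal is correct and follows essentially the same route as the paper: identify a neighbourhood of $C^{[\n]}$ in $P_X^T$ with the relative nested Hilbert scheme over the smooth neighbourhood $U\subset\Hk$ supplied by Theorem \ref{ChKi}, deduce smoothness of $Z\subset M$ with trivial normal bundle pulled back from $T_{[C]}\Hk$, and apply Proposition \ref{gencaseZM} with the sign fixed by the parity $\dim_{\{C\}}\Hk\equiv\chi(\O_S)\bmod 2$. The only (harmless) difference is that you spell out, via local constancy of the $T$-weight/support data, why nearby $T$-fixed pairs remain vertical, where the paper simply cites the working of Section \ref{sectionneshilb}.
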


\begin{remark} \label{rmkvfc} We will see in \eqref{virtdim2} below that $vd=n_0$, so in the \emph{uniformly thickened} case $n_0=\ldots=n_{d-1}$ the localised virtual class is just $(-1)^{\chi(\O_S)}[P^T_{(d)C}]$.
\end{remark}

\begin{proof}
Let $U$ denote the smooth Zariski open neighbourhood $U\subset\Hk$ of the smooth point $\{C\}$ given to us by Theorem \ref{ChKi}. The nested Hilbert scheme of the (smooth!) universal curve over $U$ defines a neighbourhood of $P^T_{(d)C}\subset P^T_{X}$:
\beq{Unhd}
P_U:=P_{X}^T\big|\_U\,.
\eeq
By the same working as in Section \ref{sectionneshilb} this is isomorphic to the open set of $P_{X}^T$ consisting of stable pairs supported on curves in $U$.

Since the nested Hilbert schemes of smooth curves are smooth, $P_U\to U$ is a smooth map. Therefore both $P^T_{(d)C}$ and $P_U$ are smooth, and by Proposition \ref{vercomp} we can apply Proposition \ref{gencaseZM} to $P^T_{(d)C}\subset P^T_{X}$ in place of $Z\subset M$.

Since $N^*_{Z/M}$ is the pullback of the conormal bundle of $\{C\}\subset H_k$, it is trivial on $P^T_{(d)C}$ with Segre class 1. And the codimension of $Z\subset M$ is $c=\dim_{\{C\}}\Hk\equiv\chi(\O_S)$ mod 2, which fixes the sign. Finally, the sum is over $\n$ satisfying \eqref{chi/n}.
\end{proof}

Therefore to compute we need only calculate the K-theory class of the bundle
$$
\Ob\!\big|\_{C^{[\n]}}\,=\,\ext_{\pi_P}^{2}(\II\udot, \II\udot)_{0}\big|_{C^{[\n]}}^f.
$$
We do this in the next Section. This will also determine the value of $vd$ (which we have not yet found, notice!).

\section{Obstruction bundle} \label{vircycleII}

Throughout this Section we use the notation
$$
\langle\ \cdot\ ,\ \cdot\ \rangle:=\big[R\hom_{\pi_P}(\,\cdot\,,\,\cdot\,)\big],
$$
where the square brackets take the $T$-equivariant K-theory class of an element of the equivariant derived category $D(P^T_X)^T$. We will compute
the restriction to $P^T_{(d)C}$ of the (dual of the) perfect obstruction theory \eqref{pofs}:
$$
\big[E\udot\big]^\vee\ =\ -\langle\II\udot,\II\udot\rangle\_{0\,}.
$$
As usual the subscript denotes the trace-free part.

We work on the neighbourhood $P_U$ \eqref{Unhd} of $P^T_{(d)C}\subset P^T_{X}$. Thus we have the description of Section \ref{sectionneshilb}, which we now summarise. $U\subset\Hk$ is a smooth open set of smooth curves in class $\k=c_1(K_S)$ with universal curve
$$
\cC\Rt{p}U
$$
whose relative nested Hilbert scheme is isomorphic to $P_U$:
$$\xymatrix@R=0pt{
\Hilb^\n(\cC) \ar[dd]^p && P_U \ar[dd]^p \\ &\cong \\
U && U.\!}
$$
That is, over $P_U$ the universal curve carries a universal family of nested divisors
$$
\cZ_0\subset\cZ_1\subset\cZ_2\subset\cdots\subset\cZ_{d-1}\subset p^*\cC\ \subset\ X\times P_U.
$$
These define the universal stable pair via \eqref{final} (or equivalently via \eqref{this}).

Therefore the universal sheaf $\FF$ is an iterated (and equivariant) extension of the sheaves
\beq{amic}
\O_\cC(\cZ_i)\otimes K_S^{-i},\quad i=0,\ldots,d-1,
\eeq
on $X\times P_U$. (The sheaves \eqref{amic} are of course pushed forward from $S\times P_U$; they are the eigensheaves of the $T$-action on $\pi_*\FF$ as in Section \ref{sectionneshilb}.) Hence the K-theory class of the universal sheaf is
\beq{decom}
\big[\FF\big]=\sum_{i=0}^{d-1}\big[\O_\cC(\cZ_i)\otimes K_S^{-i} \otimes \t^{-i}\big].
\eeq
Similarly the class of the universal complex is
$$
\big[\II\udot\big]=\big[\O_{X\times P_U}\big]-\big[\FF\big],
$$
from which we compute
\beqa
-\langle\II\udot,\II\udot\rangle\_0 &=& \langle\O_{X\times P_U},\FF\rangle
+\langle\FF,\O_{X\times P_U}\rangle-\langle\FF,\FF\rangle \\
&=& \big[R\pi_{P*}\FF\big]-\big[R\pi_{P*}\FF\big]^\vee\!\otimes\t
-\langle\FF,\FF\rangle
\eeqa
by ($T$-equivariant) Serre duality. By \eqref{decom} this is
\beq{Nvir2}
-\langle\II\udot,\II\udot\rangle\_0\ =\ \sum_{i=0}^{d-1}
\big[R\pi_{P*}\big(\O_\cC(\cZ_i)\otimes K_S^{-i}\big)\big]\t^{-i}
-\big[R\pi_{P*}\big(\O_\cC(\cZ_i)\otimes K_S^{-i}\big)\big]^\vee\t^{i+1} -\langle\FF,\FF\rangle,
\eeq
where
\beq{homFF}
-\langle\FF,\FF\rangle\ =\ -\sum_{i,j=0}^{d-1}
\langle\O_\cC(\cZ_i),\O_\cC(\cZ_j)\otimes K_S^{i-j}\rangle\t^{i-j}.
\eeq
Since $R\hom(\O_\cC(\cZ_i),\O_\cC(\cZ_j))$ has the same K-theory class as the alternating sum of its cohomology sheaves, a local Koszul resolution gives
$$
\big[R\hom(\O_\cC(\cZ_i),\O_\cC(\cZ_j))\big]=\big[\big(\O_\cC-\O_\cC(\cC)-K_S\big|\_\cC\t
+K_S(\cC)\big|\_\cC\t\big)(\cZ_j-\cZ_i)\big].
$$
Substituting into \eqref{homFF}, we find
\begin{align}
-\langle\FF,\FF\rangle\ =\ R\pi_{P*}\sum_{i,j=0}^{d-1}\bigg[&K_S^{i-j+1}
\big|\_\cC(\Delta_{ij})\t^{i-j}+K_S^{i-j+1}\big|\_\cC(\Delta_{ij})\t^{i-j+1} \nonumber \\ &-K_S^{i-j}\big|\_\cC(\Delta_{ij})\t^{i-j}
-K_S^{i-j+2}\big|\_\cC(\Delta_{ij})\t^{i-j+1}\bigg], \label{Nvir3}
\end{align}
where $\Delta_{ij}$ is the divisor $\cZ_j-\cZ_i$ (effective if and only if $j\ge i$).

The moving part of \eqref{Nvir2} is (the K-theory class of) $N^{\vir}$, and will be used in Section \ref{virnorsec}. For now we concentrate on the fixed part --- i.e. the dual of the obstruction theory $\big[(E\udot)^f\big]^\vee$ of $P^T_X$. We also restrict to $C^{[\n]} \subset P^T_{(d)C}\subset P^T_X$, so $\cC$ becomes plain $C$. We set
$$
\Delta_i:=\Delta_{i-1,i}=\cZ_i-\cZ_{i-1} \quad\mathrm{of\ length\ }\delta_i:=n_i-n_{i-1},
$$
and use the standard isomorphism \cite{Che}
\begin{align} \label{isot}
C^{[\n]}\ &\Rt{\sim}\ C^{[n_0]} \times C^{[\delta_1]} \times \cdots \times C^{[\delta_{d-1}]}\,, \\ \nonumber
(Z_0, Z_1, \ldots, Z_{d-1})\ &\Mapsto\ (Z_0, \Delta_1,  \ldots, \Delta_{d-1}).
\end{align}
The fixed parts of \eqref{Nvir2} and \eqref{Nvir3} give $\big[(E\udot)^f\big]^\vee=
-\langle\II\udot,\II\udot\rangle^f_0$ as
$$
R\pi_{P*}\bigg[\O_C(\cZ_0)+\sum_{j=1}^{d-1}\Big(K_S\big|\_C
+\O_C(\Delta_j)-\O_C-K_S\big|_C(\Delta_j)\Big)+K_S\big|_C-\O_C\bigg]. $$
Simplifying gives
$$
\big[(E\udot)^f\big]^\vee\ =\ R\pi_{P*}\bigg[K_S\big|_C+\O_{\cZ_0}(\cZ_0)+
\sum_{i=1}^{d-1}\Big(\O_{\Delta_i}(\Delta_i)-K_S\big|_{\Delta_i}(\Delta_i)\Big)\bigg].
$$
The first term is the natural obstruction theory of $H_\k$, and the next two give the tangent bundle of $C^{[\n]}$ via the isomorphism \eqref{isot}. Subtracting the tangent terms leaves minus the K-theory class of the obstruction bundle, so
\beq{obbb}
\Big[\!\Ob\!\big|\_{C^{[\n]}}\Big]\ =\ \Big[R^1\pi_{P*}\big(K_S\big|_C\big)\Big]+
\sum_{i=1}^{d-1}\pi_{P*}\left[K_S\big|_{\Delta_i}(\Delta_i)\right].
\eeq

In particular the virtual dimension of $P^T_X$ at any point of $C^{[\n]}$ is $\chi(K_S|_C)+n_0$,
where $n_0$ is the length of $Z_0$. As $C$ is in the canonical class $\beta=\k$ we have $\chi(K_S|_C)=0$, so finally we obtain
\beq{virtdim2}
vd=n_0.
\eeq

We now substitute \eqref{obbb} into \eqref{530}. The first term of \eqref{obbb}
is the class of a trivial bundle over $P^T_{(d)C}$, so does not contribute. Therefore the cosection localised virtual cycle in $A_{vd}(C^{[\n]})$ is simply\footnote{As noted in Remark \ref{rmkvfc}, when $n_0=n_{d-1}$ this reduces to $(-1)^{\chi(\O_S)}\big[C^{[n_0]}\big]$.}
$$
(-1)^{\chi(\O_S)}\prod_{i=1}^{d-1}
c_{\mathrm{top}}\!\left(\!\pi_{P*}\big(K_S\big|_{\Delta_i\!}(\Delta_i)\big)\!\right)
\in A_{n\_0}\big(C^{[n\_0]} \times C^{[\delta_1]} \times \cdots \times C^{[\delta_{d-1}]}\big).
$$
This is easily calculated via relative Serre duality. Since $\Delta_i\subset C^{[\delta_i]}\times C$ is a divisor, its relative canonical bundle over $C^{[\delta_i]}$ is
$$
\omega_{\Delta_i/C^{[\delta_i]}}\ \cong\ \omega_C(\Delta_i)\big|_{\Delta_i}\ \cong\
K_S^2\otimes\O_{\Delta_i}(\Delta_i).
$$
Therefore the localised virtual cycle is
\begin{align*}
(-1)^{\chi(\O_S)}\prod_{i=1}^{d-1}c\_{\delta_i}\!\!\left(\!\pi_{P*}\big(K_S^{-1}\big|_{\Delta_i}\otimes
\omega_{\Delta_i/C^{[\delta_i]}}\big)\!\right)
&=(-1)^{\chi(\O_S)}\prod_{i=1}^{d-1}c\_{\delta_i}\!\left(\!\big(\pi_{P*}K_S\big|_{\Delta_i}\big)^{\!*\!}\right)\\
&=(-1)^{\chi(\O_S)}\prod_{i=1}^{d-1}(-1)^{\delta_i}c\_{\delta_i}\!\left(\!\big(K_S|\_C\big)^{\![\delta_i]}\right).
\end{align*}
Using the binomial convention \eqref{binomconv},
$$
\int_{C^{[k]}}c_k\big(L^{[k]}\big)={\deg L\choose k},
$$
for any line bundle $L$ on $C$. This is easiest to see when $L$ has a section $s$ with reduced zeros $z_1,\ldots,z_{\deg L}$. Then the induced section $s^{[k]}$ of $L^{[k]}$ has reduced zeros at precisely the points $(z_{i_1},\ldots,z_{i_k})$, where $\{i_1,\ldots,i_k\}$ is any subset of $\{1,\ldots,\deg L\}$.\footnote{More generally when $n \geq 2h-1$ Lemma VIII.2.5 of \cite{ACGH} gives an expression for $c_\mdot(L^{[k]})$. Combining with \eqref{Poin} below gives the formula. For general $n$ the formula follows using the ``embedding trick'' of Section \ref{universal}.} Putting it all together, we have proved the following.

\begin{proposition} \label{vircycfinal}
The Kiem-Li localised virtual cycle \eqref{530} is the multiple
$$
(-1)^{\chi(\O_S)+n_{d-1}-n_0}\prod_{i=1}^{d-1}{\k^2\choose\delta_i}
$$
of the cycle
\beq{cycle}
\big[C^{[n_0]}\big]\times[\pt]\times\ldots\times[\pt]
\eeq
in $A_{n_0}\big(C^{[n_0]}\times C^{[\delta_1]}\times\ldots\times C^{[\delta_{d-1}]}\big)
=A_{n_0}(C^{[\n]}).\hfill\square$
\end{proposition}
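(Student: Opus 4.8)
The plan is to read the result off the chain of simplifications carried out immediately above the statement, where the Kiem--Li localised virtual cycle has already been reduced to
$$(-1)^{\chi(\O_S)}\prod_{i=1}^{d-1}(-1)^{\delta_i}\,c_{\delta_i}\big((K_S|_C)^{[\delta_i]}\big),$$
viewed on $C^{[n_0]}\times C^{[\delta_1]}\times\cdots\times C^{[\delta_{d-1}]}$ through the splitting \eqref{isot}. All that remains is to evaluate each tautological factor and collect the signs; the substantive geometry has already gone into deriving the obstruction bundle \eqref{obbb} and the relative Serre-duality computation, so this statement is essentially the clean repackaging of those.

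First I would note that the factor indexed by $i$ is pulled back from the single symmetric product $C^{[\delta_i]}=\Sym^{\delta_i}C$, which has dimension $\delta_i$ equal to the rank of the tautological bundle $(K_S|_C)^{[\delta_i]}$. Thus $c_{\delta_i}$ is its \emph{top} Chern class, a zero-cycle; since $C$ is connected, so is $C^{[\delta_i]}$, and $A_0(C^{[\delta_i]})\cong\Z\cdot[\pt]$, so this zero-cycle equals its degree times $[\pt]$. That degree is given by the tautological integral recalled just before the statement,
$$\int_{C^{[\delta_i]}}c_{\delta_i}\big((K_S|_C)^{[\delta_i]}\big)={\deg(K_S|_C)\choose\delta_i},$$
and since $C\in|K_S|$ is a canonical curve, $\deg(K_S|_C)=\k\cdot\k=\k^2$. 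Hence each factor contributes ${\k^2\choose\delta_i}[\pt]$, the binomial read through the convention \eqref{binomconv} when $\delta_i>\k^2$. The remaining coordinate $C^{[n_0]}$ carries no Chern-class contribution, the only obstruction term over it being the trivial bundle $R^1\pi_{P*}(K_S|_C)$ already discarded, so it contributes its fundamental class $[C^{[n_0]}]$.

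Assembling these, the product of the $[\pt]$'s with $[C^{[n_0]}]$ is exactly the cycle \eqref{cycle}, of dimension $n_0=vd$ by \eqref{virtdim2}, and the scalar is $(-1)^{\chi(\O_S)}\big(\prod_{i=1}^{d-1}(-1)^{\delta_i}\big)\prod_{i=1}^{d-1}{\k^2\choose\delta_i}$. The sign telescopes via $\sum_{i=1}^{d-1}\delta_i=\sum_{i=1}^{d-1}(n_i-n_{i-1})=n_{d-1}-n_0$, giving $\prod_{i=1}^{d-1}(-1)^{\delta_i}=(-1)^{n_{d-1}-n_0}$ and hence the stated coefficient $(-1)^{\chi(\O_S)+n_{d-1}-n_0}\prod_{i=1}^{d-1}{\k^2\choose\delta_i}$.

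I expect no serious obstacle, since the real work has already been done upstream. The only points needing a moment's care are the honest use of the tautological integral in the regime $\delta_i>\k^2$, where the binomial convention \eqref{binomconv} must be invoked so that the vanishing of the genuine top Chern class matches ${\k^2\choose\delta_i}=0$, and the bookkeeping that matches each $[\pt]$ factor to its correct symmetric-product coordinate under \eqref{isot}.
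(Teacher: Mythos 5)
Your proposal is correct and is exactly the paper's own argument: the proposition carries a $\square$ because its proof is the computation immediately preceding it, and the steps you supply --- identifying each factor $c_{\delta_i}\big((K_S|_C)^{[\delta_i]}\big)$ as a top Chern class on the $\delta_i$-dimensional factor $C^{[\delta_i]}$, evaluating its degree by $\int_{C^{[\delta_i]}}c_{\delta_i}\big(L^{[\delta_i]}\big)=\binom{\deg L}{\delta_i}$ with $\deg(K_S|_C)=\k\cdot\k=\k^2$, and telescoping $\sum_i\delta_i=n_{d-1}-n_0$ for the sign --- are precisely how the paper concludes.

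One justification you give is false as stated and worth repairing: for $h\geq 1$ the Chow group $A_0(C^{[\delta_i]})$ is \emph{not} $\Z\cdot[\pt]$. Already for $\delta_i=1$ the kernel of the degree map $A_0(C)\to\Z$ is $\Pic^0(C)$, which is nontrivial; correspondingly $c_1(K_S|_C)$ is rationally equivalent to $\k^2[\pt]$ only if $K_S|_C\cong\O_C(\k^2\pt)$, which fails in general. What your argument actually establishes is that the zero-cycle has degree $\binom{\k^2}{\delta_i}$, hence equals $\binom{\k^2}{\delta_i}[\pt]$ in \emph{homology}, i.e.\ up to homological rather than rational equivalence. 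This is harmless for everything downstream --- all later uses of the proposition (Sections 7--10) are integrals of cohomology classes over this cycle, which depend only on its homology class, and the paper is silently making the same identification when it writes the conclusion in $A_{n_0}(C^{[\n]})$ --- but the degree-determines-class step should either be phrased homologically or not be attributed to a (false) computation of $A_0$ of a symmetric product.
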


\section{The virtual normal bundle} \label{virnorsec}

We want to calculate the contribution of the vertical component $[P_{\ver}^{T}]^{\vir}$ \eqref{KLvir} to the invariants \eqref{generalinv}. By Proposition \ref{vircycfinal} we can now pull everything back to an integral on $C^{[n_0]}$. We do this first with the virtual normal bundle. We use the projections
\beq{projns}
\xymatrix@=15pt{
& C^{[\n]}\times C \ar[dl]_\pi \ar[dr]^{p\_C} & \\
C^{[\n]} & & C}
\eeq
and usually suppress $p_C^*$ as before. We also use the standard notation
$$
c_s(E):=1+s \, c_1(E)+s^2 \, c_2(E)+\ldots
$$
for any complex of sheaves $E$. When $E$ is a vector bundle of rank $r$ we have
\begin{equation} \label{param}
e(E \otimes \t^w) = \sum_{i=0}^{r} c_i(E) (w t)^{r-i} = (w t)^r c_{1/wt}(E) = (w t)^r c_{-1/wt}(E^\vee).
\end{equation}
Therefore the same identity holds for $E=\{\cdots\to E^i\to E^{i+1}\to\cdots\}$ a finite complex of rank $r:=\sum_i(-1)^i\rk(E^i)$.
In particular, when $E$ is a trivial bundle (or constant complex) we have
\begin{equation} \label{free}
e(E \otimes \t^w) = (w t)^r.
\end{equation}
%

\begin{proposition} \label{Nvirforcalc}
The pull-back of $\frac{1}{e(N^{\vir})}$ to the cycle $C^{[n_0]}$ of \eqref{cycle}
equals
$$
A\,t^{n_0\,}c_{-1/dt}(E)\prod_{i=1}^{d-1} c_{-1/it}(F_i)
$$
where
\begin{eqnarray} 
\begin{split} \label{EFA}
E &= R\pi_*\Big[K_{S}|_{C}^{-(d-1)}(\cZ_0 + \Delta_{0,d-1})\Big], \\
F_i &= R\pi_* \Big[K_{S}|_{C}^{-(i-1)} (\cZ_0 + \Delta_{0,i-1}) \otimes (\O_C - K_{S}|_{C}^{-1}(\Delta_i)) \Big],\ \ \mathrm{and} \\ 
A &= (-1)^{\frac{1}{2}d(d-1)\k^2 + \sum_{i=1}^{d-1} n_i\ } \bigg( \frac{d!}{d^d} \bigg)^{\!\!\k^2} d^{\,n_{d-1}} \prod_{i=1}^{d-1} i^{\,-\delta_i}\,. 
\end{split}
\end{eqnarray}
\end{proposition}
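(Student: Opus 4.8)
The plan is to read $N^{\vir}$ off the K-theory class \eqref{Nvir2}, whose moving (nonzero-weight) part is by definition $[N^{\vir}]$. First I would restrict everything to the cycle \eqref{cycle}, on which only $\cZ_0$ varies over $C^{[n_0]}$ while the divisors $\Delta_i$, and hence all the differences $\Delta_{ij}=\Delta_{0,j}-\Delta_{0,i}$, are fixed. The crucial simplification is that in $-\langle\FF,\FF\rangle$ \eqref{Nvir3} every sheaf has the form $K_{S}|_{C}^{\bullet}(\Delta_{ij})$ with $\Delta_{ij}$ independent of $\cZ_0$; thus \eqref{Nvir3} restricts to a sum of \emph{constant} complexes on $C^{[n_0]}$, contributing to $1/e(N^{\vir})$ only scalar powers of the $wt$ through \eqref{free}. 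All nonconstant contributions therefore come from the two $R\pi_*$ sums in \eqref{Nvir2}; writing $E_i := R\pi_*[K_{S}|_{C}^{-i}(\cZ_0+\Delta_{0,i})]$ these give $\sum_{i=1}^{d-1}E_i\,\t^{-i}-\sum_{i=0}^{d-1}E_i^\vee\,\t^{i+1}$, the weight-$0$ summand $E_0$ being fixed and already absorbed into the obstruction bundle of Proposition \ref{vircycfinal}.

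Next I would compute $1/e$ of each weight piece by \eqref{param}. For the positive-weight summands $1/e(-E_i^\vee\t^{i+1})=e(E_i^\vee\t^{i+1})=((i+1)t)^{\rk E_i}c_{-1/(i+1)t}(E_i)$, and for the negative-weight summands $1/e(E_i\t^{-i})=(-it)^{-\rk E_i}c_{-1/it}(E_i)^{-1}$, using $c_s(E_i^\vee)=c_{-s}(E_i)$ throughout. Collecting Chern-class factors gives, after reindexing $j=i+1$, the ratio $\prod_{j=1}^{d}c_{-1/jt}(E_{j-1})\big/\prod_{i=1}^{d-1}c_{-1/it}(E_i)$. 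The heart of the computation is the elementary K-theory identity
$$F_i = E_{i-1}-E_i,$$
which follows by expanding the factor $\O_C - K_{S}|_{C}^{-1}(\Delta_i)$ in \eqref{EFA} and using $\Delta_{0,i-1}+\Delta_i=\Delta_{0,i}$. This makes the ratio telescope: the $j=d$ term is $c_{-1/dt}(E_{d-1})=c_{-1/dt}(E)$, and for $j=1,\dots,d-1$ the surviving factor is $c_{-1/jt}(E_{j-1}-E_j)=c_{-1/jt}(F_j)$, producing exactly the Chern-class part $c_{-1/dt}(E)\prod_{i=1}^{d-1}c_{-1/it}(F_i)$ of Proposition \ref{Nvirforcalc}.

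It then remains to assemble the scalar prefactors into $A\,t^{n_0}$. The power of $t$ is forced by a rank count: the symmetric obstruction theory has virtual rank $0$, its fixed part has rank $vd=n_0$ by \eqref{virtdim2}, so $\rk N^{\vir}=-n_0$ and hence $1/e(N^{\vir})$ carries an overall $t^{n_0}$. For the numerical factor $A$, I would compute each rank by Riemann--Roch on $C$, $\rk E_i=\chi(K_{S}|_{C}^{-i}(\cZ_0+\Delta_{0,i}))=n_i-(i+1)\k^2$ (using $\deg K_{S}|_{C}=\k^2$ and $g=\k^2+1$), feed these exponents into the products $\prod_i((i+1)t)^{\rk E_i}$ and $\prod_i(-it)^{-\rk E_i}$ coming from \eqref{Nvir2}, and then combine with the scalar contribution of the constant complexes from \eqref{Nvir3}. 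A consistency check is that the $E$-part alone supplies $t^{n_0-\k^2}$ while the \eqref{Nvir3}-part supplies $t^{\k^2}$, matching $t^{n_0}$.

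I expect the main obstacle to be precisely this last, purely numerical, bookkeeping: tracking the signs (from the reciprocals $1/e$, from the negative weights $(-it)^{\bullet}$, and from $c_s(E^\vee)=c_{-s}(E)$) and the integer powers so that they collapse into the compact form $A=(-1)^{\frac12 d(d-1)\k^2+\sum_{i=1}^{d-1}n_i}(d!/d^d)^{\k^2}d^{\,n_{d-1}}\prod_{i=1}^{d-1}i^{-\delta_i}$ of \eqref{EFA}. In particular the factor $(d!/d^d)^{\k^2}$ should emerge only after the scalar contributions of \eqref{Nvir2} and \eqref{Nvir3} are combined, since neither produces it alone. By contrast the identification of the Chern-class factors $E$ and $F_i$ is robust once the point-restriction \eqref{cycle} and the telescoping identity $F_i=E_{i-1}-E_i$ are in hand.
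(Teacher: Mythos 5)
Your proposal is correct and follows essentially the same route as the paper's proof: restrict to the cycle so that the $\Delta_{ij}$-terms of \eqref{Nvir3} become constant complexes contributing only scalars via \eqref{free}, apply \eqref{param} to the remaining weight spaces, and telescope the Chern-class ratio using exactly the K-theory identity $F_i=E_{i-1}-E_i$ (the paper performs the same telescoping by reordering the product), with the scalar factor $A\,t^{n_0}$ assembled from the Riemann--Roch ranks $n_i-(i+1)\k^2$ and the constant-complex contributions. Your consistency checks (the $t^{n_0-\k^2}$ versus $t^{\k^2}$ split, and the factor $(d!/d^d)^{\k^2}$ arising only from combining the two parts, as $d^{\k^2}\cdot((d-1)!/d^d)^{\k^2}$) match the paper's equations \eqref{cst1} and \eqref{cst2} precisely.
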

\begin{proof}
Taking the moving parts of \eqref{Nvir2} and \eqref{Nvir3} we find
\begin{align} 
&\frac{1}{e(N^{\vir})} = e((R \pi_* \O(\cZ_0))^{\vee} \otimes \mathfrak{t}) \prod_{i=1}^{d-1} \frac{e((R \pi_* K_{S}|_{C}^{-i}(\cZ_i))^{\vee} \otimes \mathfrak{t}^{i+1})}{e((R \pi_* K_{S}|_{C}^{-i}(\cZ_i)) \otimes \mathfrak{t}^{-i})} \label{Nvirstart} \\
\times \prod_{{\scriptsize{\begin{array}{c} i,j = 0 \\ i \neq j \end{array}}}}^{d-1} &\frac{e(R \pi_* K_{S}|_{C}^{i-j}(\Delta_{ij}) \otimes \mathfrak{t}^{i-j})}{e(R \pi_* K_{S}|_{C}^{i-j+1}(\Delta_{ij}) \otimes \mathfrak{t}^{i-j})} \prod_{{\scriptsize{\begin{array}{c} i,j = 0 \\ i+1 \neq j \end{array}}}}^{d-1} \frac{e(R \pi_* K_{S}|_{C}^{i-j+2}(\Delta_{ij}) \otimes \mathfrak{t}^{i-j+1})}{e(R \pi_* K_{S}|_{C}^{i-j+1}(\Delta_{ij}) \otimes \mathfrak{t}^{i-j+1})}\,. \nonumber
\end{align}
We start with the second line of \eqref{Nvirstart}. On our cycle
$C^{[n_0]} \times \Delta_1 \times \ldots \times \Delta_{d-1}$ the divisors
$$
\Delta_{ij} = \left\{\!\!\begin{array}{cc} \sum_{k=i+1}^{j} \Delta_k, &  \ j>i \\ \sum_{k=j+1}^{i} \Delta_k, & \ i>j \end{array} \right.
$$
are fixed, since the divisors $\Delta_k$ are. Therefore each
$R \pi_* K_{S}|_{C}^{i-j+l}(\Delta_{ij})$ is a constant complex $\O_{C^{[n_0]}}^{\oplus r}$, where
$$
r=\chi\big(K_{S}|_{C}^{i-j+l}(\Delta_{ij})\big)=n_j-n_i + (i-j+l-1)\k^2
$$
by Riemann-Roch.

So by \eqref{free} the second line of \eqref{Nvirstart} is
$$
\prod_{{\scriptsize{\begin{array}{c} i,j = 0 \\ i \neq j \end{array}}}}^{d-1} \frac{((i-j)t)^{n_j-n_i+(i-j-1)\k^2}}{((i-j)t)^{n_j-n_i+(i-j)\k^2}} \prod_{{\scriptsize{\begin{array}{c} i,j = 0 \\ i+1 \neq j \end{array}}}}^{d-1} \frac{((i-j+1)t)^{n_j-n_i+(i-j+1)\k^2}}{((i-j+1)t)^{n_j-n_i+(i-j)\k^2}} $$
which simplifies to
$$
\prod_{{\scriptsize{\begin{array}{c} i,j = 0 \\ i \neq j \end{array}}}}^{d-1} \frac{1}{((i-j)t)^{\k^2}} \prod_{{\scriptsize{\begin{array}{c} i,j = 0 \\ i+1 \neq j \end{array}}}}^{d-1} ((i+1-j)t)^{\k^2}.
$$
The only terms in this expression which do not cancel immediately are those with $i=0$ in the first product and $i=d-1$ in the second product. This gives
\begin{equation} \label{cst1}
\prod_{j=1}^{d-1} \frac{1}{(-jt)^{\k^2}} \cdot \prod_{j=0}^{d-1} ((d-j)t)^{\k^2} = (-1)^{(d-1)\k^2} d^{\,\k^2} t^{\k^2}.
\end{equation}

We now deal with the first line of \eqref{Nvirstart}. Applying \eqref{param} gives 
\beq{ecur}
t^{n_0 - \k^2} c_{-1/t}(R \pi_* \O(\cZ_0))\,\prod_{i=1}^{d-1} \frac{((i+1)t)^{n_i - (i+1)\k^2}}{(-it)^{n_i - (i+1)\k^2}}\ \prod_{i=1}^{d-1} \frac{c_{-1/(i+1)t}(R \pi_* K_{S}|_{C}^{-i}(\cZ_i))}{c_{-1/it}(R \pi_* K_{S}|_{C}^{-i}(\cZ_i))}\,.
\eeq
The first product can be simplified as
\begin{align}
\begin{split} \label{cst2}
&(-1)^{\sum_{i=1}^{d-1}(n_i - (i+1)\k^2)} \prod_{i=1}^{d-1} \Big( \frac{i+1}i \Big)^{-(i+1) \k^2}\ \prod_{i=1}^{d-1} \Big( \frac{i+1}{i} \Big)^{n_i} \\
=\ & (-1)^{\big(\frac{1}{2}d(d+1)-1\big)\k^2 + \sum_{i=1}^{d-1} n_i} \prod_{i=1}^{d-1} \Big( \frac{i}{i+1} \Big)^{(i+1) \k^2}\left(\prod_{i=1}^{d-1} i^{\,n_{i-1} - n_i}\!\right)d^{\,n_{d-1}} \\
=\ & (-1)^{\frac{1}{2}d(d-1)\k^2 + (d-1)\k^2 + \sum_{i=1}^{d-1} n_i} \left( \frac{(d-1)!}{d^d} \right)^{\!\!\k^2} d^{\,n_{d-1}}\prod_{i=1}^{d-1} i^{\,-\delta_i}.
\end{split}
\end{align}
Multiplying $t^{n_0-\k^2}$, \eqref{cst2} and \eqref{cst1} together gives $A\,t^{n_0}$, as required. 

What remains in \eqref{ecur} is  
$$
c_{-1/t}(R \pi_* \O(\cZ_0)) \prod_{i=1}^{d-1} \frac{c_{-1/(i+1)t}(R \pi_* K_{S}|_{C}^{-i}(\cZ_0+\Delta_{0,i}))}{c_{-1/it}(R \pi_* K_{S}|_{C}^{-i}(\cZ_0+\Delta_{0,i}))}\,.
$$
Reordering the product gives
$$
\prod_{i=1}^{d-1}\frac{c_{-1/it}(R \pi_* K_{S}|_{C}^{-(i-1)}(\cZ_0+\Delta_{0,i-1}))}{c_{-1/it}(R \pi_* K_{S}|_{C}^{-i}(\cZ_0+\Delta_{0,i}))}\cdot c_{-1/dt}(R \pi_* K_{S}|_{C}^{-(d-1)}(\cZ_0+\Delta_{0,d-1})),
$$
which we write as
$$
\left[\prod_{i=1}^{d-1}
c_{-1/it}\big(R \pi_*\big(K_{S}|_{C}^{-(i-1)}(\cZ_0+\Delta_{0,i-1})-K_{S}|_{C}^{-i}(\cZ_0+\Delta_{0,i})\big)\big)\right]
c_{-1/dt}(E).
$$
This is $c_{-1/dt}(E)\prod_{i=1}^{d-1} c_{-1/it}(F_i)$ as claimed.
\end{proof}

\section{Descendent insertions} \label{descendsec}

Recall from Section \ref{sectionfixedlocus} that given a cohomology class $\sigma\in H^*(S,\Q)$ and a nonnegative integer $\alpha$, we defined in \eqref{defdesc} the descendent insertion
$$
\tau_\alpha(\sigma)\,\in\,H^*_{T}(P_X,\Q).
$$
We have localised the vertical component $[P_{\ver}^T]^{\vir}$ of the virtual cycle $[P_{X}^{T}]^{\vir}$ to
$$
P_{(d)C}^T=\bigsqcup_{\n}C^{[\n]}.
$$
We next restrict the descendents to $C^{[\n]}$. We use the projections \eqref{projns} and the universal divisors $\cZ_0\subset\cdots\subset\cZ_{d-1}\subset C^{[\n]}\times C$.

\begin{proposition} \label{E(x)}
Let $E(x):=\frac{1-e^{-x}}x=1-x/2!+x^2/3!-\ldots\ $. The restriction of $\tau_\alpha(\sigma)$ to $C^{[\n]}\subset P_X$ is the degree\footnote{This is the \emph{real} cohomological degree; twice the complex degree.} $2\alpha+\deg\sigma-2$ part of
$$
\pi_{*\!}\left[p_C^*\left(\sigma|\_C-\frac{\sigma.\k}2\Big|_C\right)E(\k+t)\,\sum_{j=0}^{d-1}
e^{[\cZ_j]-j(\k+t)}\right].
$$
\end{proposition}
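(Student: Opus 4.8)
The plan is to compute $\ch^T(\FF)|_{C^{[\n]}}$ explicitly from the K-theoretic decomposition \eqref{decom} and then feed it into the definition \eqref{defdesc} of $\tau_\alpha(\sigma)$. The starting observation is that, although $\FF$ is only an \emph{iterated} extension of the eigensheaves \eqref{amic}, its Chern character depends only on its K-theory class, so by \eqref{decom} restricted to $C^{[\n]}$ (where $\cC$ becomes plain $C$) we have
$$
\big[\FF\big]=\sum_{i=0}^{d-1}\big[\O_C(\cZ_i)\otimes K_S^{-i}|_C\otimes\t^{-i}\big].
$$
The crucial geometric point I would first establish is that each weight-$(-i)$ eigensheaf is (scheme-theoretically) supported on the zero-section copy $\iota_C\colon C\hookrightarrow S\hookrightarrow X$, so that its $T$-equivariant Chern character on $X$ is computed by equivariant Grothendieck--Riemann--Roch for the \emph{proper} embedding $\iota_C$.

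Next I would pin down the Todd correction. The normal bundle $N_{C/X}$ has K-theory class $[K_S|_C]+[K_S|_C\otimes\t]$, the first summand being $N_{C/S}$ (weight $0$) and the second $N_{S/X}|_C$ (weight $+1$, forced by the convention $\O(-S)=K_S^{-1}\otimes\t^{-1}$); their equivariant first Chern classes are $\k$ and $\k+t$. Hence
$$
\td^T(N_{C/X})^{-1}=E(\k)\,E(\k+t)\big|_C ,
$$
and since $\k^2|_C=0$ on the curve $C$ the factor $E(\k)|_C$ collapses to $1-\tfrac{\k}{2}$. Applying GRR to the $i$-th eigensheaf, whose equivariant Chern character on $C$ is $e^{[\cZ_i]-i(\k+t)}$ (the twist $K_S^{-i}|_C\otimes\t^{-i}$ contributing $-i\k-it$), and summing gives
$$
\ch^T(\FF)=\iota_{C*}\Big(\big(1-\tfrac{\k}{2}\big)\,E(\k+t)\sum_{i=0}^{d-1}e^{[\cZ_i]-i(\k+t)}\Big).
$$
As a cross-check on the factor $E(\k+t)$ I would verify it against the Koszul resolution $[\O_S]=[\O_X]-[K_S^{-1}\otimes\t^{-1}]$, which reproduces $\ch^T(\O_S)=1-e^{-(\k+t)}$.

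Finally I would substitute into \eqref{defdesc}. Since the support lies in $C^{[\n]}\times C$, the projection formula turns $\pi_{P*}\big(\pi_X^*\sigma\cap\iota_{C*}(-)\big)$ into $\pi_*\big(\iota_C^*\pi_X^*\sigma\cdot(-)\big)$, and $\iota_C^*\pi_X^*\sigma=p_C^*(\sigma|_C)$ because $\pi_X\circ\iota_C$ is just the zero-section map. Absorbing $p_C^*(\sigma|_C)$ and using $\big(1-\tfrac{\k}{2}\big)\sigma|_C=\sigma|_C-\tfrac12(\sigma\cup\k)|_C$ yields exactly the stated integrand. For the degree bookkeeping, the definition \eqref{defdesc} uses only $\ch_{\alpha+2}^T(\FF)$, of real degree $2\alpha+4$; under $\iota_{C*}$ this raises real degree by $2\operatorname{codim}(C\subset X)=4$, capping with $\sigma$ adds $\deg\sigma$, and $\pi_*$ (fibre dimension $1$) lowers by $2$, so taking the degree $2\alpha+\deg\sigma-2$ part of the full pushforward automatically selects the $\ch_{\alpha+2}$ contribution, matching the statement.

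The main obstacle I anticipate is the very first geometric step: carefully justifying that in K-theory each $T$-eigensheaf may be treated as pushed forward from the zero-section copy of $C$ with the correct equivariant normal-bundle weights (in particular the fibre direction carrying weight $\t^{+1}$), rather than from some shifted copy in the fibre. Once this equivariant GRR input is fixed, the remainder is a routine projection-formula and degree-counting calculation.
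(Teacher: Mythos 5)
Your proof is correct and takes essentially the same route as the paper's: the K-theory decomposition \eqref{decom}, $T$-equivariant Grothendieck--Riemann--Roch for the inclusion $C\hookrightarrow X$ with normal bundle $K_S|_C\oplus K_S|_C\otimes\t$ giving the Todd correction $E(\k)E(\k+t)$, then the projection formula with $i^*\pi_X^*\sigma=p_C^*(\sigma|_C)$ and the same degree bookkeeping. The only (harmless) differences are that you spell out the final simplification $E(\k)|_C=1-\k/2$ via $\k^2|_C=0$, which the paper leaves implicit, and that your anticipated ``main obstacle'' --- that the eigensheaves are pushed forward from the zero-section copy of $C$ with fibre weight $\t^{+1}$ --- is already settled by the paper's Section 4/6 description of $\FF$ preceding \eqref{decom}.
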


\begin{proof}
As in \eqref{decom}, the K-theory class of the restriction of the universal sheaf $\FF$ to $C^{[\n]}\times X$ is
$$
\big[\FF\big]=\sum_{j=0}^{d-1}\big[\O_{C^{[\n]}\times C}(\cZ_j)\otimes K_S^{-j}\otimes\t^{-j}\big].
$$
Let $i$ denote both the inclusion $C\into X$ and its basechange $C^{[\n]}\times C\into C^{[\n]}\times X$. It has normal bundle $\nu_C=K_S|_C\,\oplus\,K_S|_C\otimes\t$, so by $T$-equivariant Grothendieck-Riemann-Roch \cite{EG},
\begin{eqnarray}
\ch^T(\FF) &=& \sum_j\ch(i_*\O(\cZ_j))e^{-j(\k+t)} \nonumber \\
&=& \sum_j i_*\left(\ch(\O(\cZ_j))\td^{-1}(\nu_C)\right)e^{-j(\k+t)} \nonumber \\
&=& i_*\sum_je^{[\cZ_j]}\,E(\k)\,E(\k+t)\,e^{-j(\k+t)}. \label{abf}
\end{eqnarray}
If we write this as $i_*A$ then, again on restriction to $C^{[\n]}\subset P_X$ we find
$$
\tau_\alpha(\sigma)=
\pi_{P*} \big( \pi_X^{*} \sigma \cap[i_*A]_{2\alpha+4}\big)=
\pi_* \big(i^*\pi_X^{*} \sigma\cap[A]_{2\alpha}\big)
$$
because $\pi=\pi_P\circ i$. Recalling the identification \eqref{HtoHT}, we also see that $i^*\pi_X^{*} \sigma=p_C^*(\sigma|_C)$. Substituting into \eqref{abf} gives
$$
\pi_*\!\left[p_C^*(\sigma|_C)E(\k)E(\k+t)\sum_{j=0}^{d-1}
e^{[\cZ_j]-j(\k+t)}\right]_{2\alpha+\deg\sigma}\ ,
$$
which simplifies to the required formula.
\end{proof}

\begin{corollary} \label{EB}
Let $D \in H_2(S)$. Then on restriction to the cycle $C^{[n_0]}$ of \eqref{cycle} we find that $\tau_\alpha(\sigma)$ is the degree $2\alpha$ part of
$$
(\k\cdot\! D)E(t)\sum_{j=0}^{d-1}
e^{\omega-jt},
$$
where $\omega$ is the class of the divisor $\cZ_0\subset C^{[n_0]}\times C$ restricted to $C^{[n_0]}\times\{c_0\}$ (and $c_0\in C$ is any basepoint).
\end{corollary}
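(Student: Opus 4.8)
The plan is to specialise Proposition \ref{E(x)} to the class $\sigma\in H^2(S)$ Poincaré dual to $D$, and then restrict the resulting class on $C^{[\n]}$ to the sub-cycle $C^{[n_0]}$ appearing in \eqref{cycle}. Since $\deg\sigma=2$, the relevant cohomological degree $2\alpha+\deg\sigma-2$ in Proposition \ref{E(x)} is exactly $2\alpha$, matching the claim. The first thing I would check is that the correction term $\frac{\sigma\cdot\k}{2}\big|_C$ drops out: it is the restriction to $C$ of the product class $\sigma\cup\k\in H^4(S)$, which lands in $H^4(C)=0$ because $C$ is a curve. Hence only $p_C^*(\sigma|_C)$ survives in front of the exponential sum.

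Next I would evaluate $\sigma|_C$. As $\sigma$ is Poincaré dual to $D$ and $[C]=\k$ in cohomology, the projection formula gives $\int_C\sigma|_C=\k\cdot D$, so $\sigma|_C=(\k\cdot D)\,[\pt]$ with $[\pt]\in H^2(C)$ the point class. The factor $p_C^*[\pt]=\big[C^{[\n]}\times\{c_0\}\big]$ then makes the pushforward $\pi_*$ localise: for any class $\gamma$ on $C^{[\n]}\times C$ one has $\pi_*\big(p_C^*[\pt]\cdot\gamma\big)=\gamma\big|_{C^{[\n]}\times\{c_0\}}$. Thus, up to the scalar $\k\cdot D$, the restriction of $\tau_\alpha(\sigma)$ equals the degree-$2\alpha$ part of $E(\k+t)\sum_j e^{[\cZ_j]-j(\k+t)}$ restricted to the slice $C^{[\n]}\times\{c_0\}$.

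The final step is to restrict to the sub-cycle $C^{[n_0]}$ via the isomorphism \eqref{isot}, along which every $\Delta_k$ ($k\ge1$) is a fixed point of $C^{[\delta_k]}$. On $C^{[n_0]}\times\{c_0\}$ I would observe that $\k=p_C^*(\k|_C)$ restricts to $0$ (a degree-$2$ class on $C$ dies at a point), so $E(\k+t)\to E(t)$; likewise each fixed divisor class $[\Delta_{0,j}]=p_C^*[\Delta_{0,j}]_C$ restricts to $0$, while $[\cZ_0]$ restricts to $\omega$ by definition. Therefore $[\cZ_j]-j(\k+t)$ becomes $\omega-jt$ uniformly in $j$, and the whole expression collapses to $(\k\cdot D)\,E(t)\sum_{j=0}^{d-1}e^{\omega-jt}$, as claimed. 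The only genuinely delicate point is the dimension bookkeeping of this last step — verifying that multiplication by the point class $\sigma|_C$ really annihilates all the $\k$- and $\Delta_{0,j}$-contributions upon restriction to a point of $C$, leaving $\cZ_0$ as the sole source of nontrivial cohomology. Everything else is routine substitution, and I would close by noting that $\omega$ is independent of the basepoint $c_0$ since $C$ is connected.
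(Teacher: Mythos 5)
Your proof is correct and follows exactly the route the paper intends: Corollary \ref{EB} is stated without separate proof precisely because it is the routine specialisation of Proposition \ref{E(x)} that you carry out --- the vanishing of $\frac{\sigma\cdot\k}{2}\big|_C$ in $H^4(C)=0$, the identification $\sigma|_C=(\k\cdot D)[\pt]$, the localisation of $\pi_*$ to the slice $C^{[\n]}\times\{c_0\}$, and the observation that $\k$ and the fixed divisors $[\Delta_{0,j}]$ die on that slice while $[\cZ_0]$ restricts to $\omega$. The degree bookkeeping ($2\alpha+\deg\sigma-2=2\alpha$) and the basepoint-independence remark are also exactly as the paper's conventions require.
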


In the formula of Theorem \ref{main} we only consider insertions $\tau_{\alpha_j}(D_j)$ coming from $D_j\in H_2(S)$. Expanding as a polynomial in $\omega$,
\beq{gamma}
\prod_{j=1}^{m} \tau_{\alpha\_j}(D_j)\ =\ \prod_{i=1}^{m} (\k \cdot\!D_i) \left[ E(t) \sum_{j=0}^{d-1}e^{\omega- j t}\right]_{2\alpha_i}\!=\ \prod_{j=1}^{m} (\k \cdot\! D_j)\sum_{a=0}^{\infty} \gamma_{a\,}\omega^a,
\eeq
for some $\gamma_a\in\Q[t]$ whose precise form we do not need. Here $[\ \cdot\ ]_{2\alpha_i}$ denotes the degree $2\alpha_i$ part in the degree 2 variables $\omega$ and $t$.

Therefore by Propositions \ref{vircycfinal} and \ref{Nvirforcalc}, $\sfZ_{d\k}^P(X,\tau_{\alpha\_1}(D_1) \cdots \tau_{\alpha\_m}(D_m))_{\ver}$ equals
\beq{integrand}
\prod_{j=1}^{m} (\k \cdot\! D_j)
\sum_{0 \leq n_0 \leq \cdots \leq n_{d-1};\ a \geq 0} q^\chi \, t^{n_0} \, \gamma_a \, B \int_{C^{[n_0]}}\omega^ac_{-1/dt}(E)
\prod_{i=1}^{d-1} c_{-1/it}(F_i),
\eeq
where $E,F_i$ are defined in \eqref{EFA},
$\chi={\sum_{i=0}^{d-1}(n_i - (i+1) \k^2)}$ by \eqref{neqn2}, and
\beqa
B&=&(-1)^{\chi(\O_S)+n_{d-1}-n_0}A\,\prod_{i=1}^{d-1}{\k^2\choose\delta_i} \\
&=&(-1)^{\frac{1}{2}d(d-1)\k^2 +\chi(\O_S)+ \sum_{i=0}^{d-1} n_i\ } \bigg( \frac{d!}{d^d} \bigg)^{\!\!\k^2}(-d)^{n_{d-1}} \prod_{i=1}^{d-1}\left[i^{\,-\delta_i}{\k^2\choose\delta_i}\!\right].
\eeqa

\section{Expression in terms of tautological classes} \label{step2}

We now write the integrand of \eqref{integrand} in terms of tautological classes on the symmetric product $C^{[n_0]}$. For now we assume, for simplicity, that
$$
n_0 > 2h -2,
$$
where $h=\k^2+1$ is the canonical genus; later we will explain how to remove this assumption. Therefore the Abel-Jacobi map
\begin{align*}
\AJ\colon\ C^{[n_0]} &\ \longrightarrow\ \Pic^{n_0}(C), \\
Z_0 &\ \Mapsto\ \O_C(Z_0),
\end{align*}
is a projective bundle. In fact, using the notation
$$\xymatrix@C=30pt{
C^{[n_0]}\times C \ar[r]^-{\AJ\times1\,}\ar[d]^{\pi_1} &  \Pic^{n_0}(C) \times C \ar[d]^{\pi_2} \\ C^{[n_0]} \ar[r]^-{\AJ}& \Pic^{n_0}(C)
}$$
and letting $\cP$ be a Poincar\'e line bundle on $\Pic^{n_0}(C) \times C$, we have
$$C^{[n_0]}\ =\ \PP(\pi_{2*} \cP).$$
We normalise $\cP$ by fixing
\beq{normali}
\cP\big|_{\Pic^{n_0}(C)\times\{c_0\}}\ \cong\ \O_{\Pic^{n_0}(C)\,},
\eeq
by tensoring it with $\pi_2^*\big(\cP^{-1}|_{\Pic^{n_0}(C)\times\{c_0\}}\big)$ if necessary.
This fixes a tautological line bundle
\beq{tautin}
\O(-1)\ \subset\ \AJ^* \pi_{2*} \cP
\eeq
on $C^{[n_0]}$, and so the tautological class
\beq{omdef}
\omega\, :=\ c_1(\O(1))\,\in\,H^2(C^{[n_0]},\Z).
\eeq
The map $\pi_2^*\pi_{2*} \cP\to\cP$, pulled back along $\AJ\times1$ and composed with the inclusion \eqref{tautin}, gives a canonical section of $(\AJ\times1)^*\cP(1)$
vanishing on the universal divisor $\cZ_0\subset C^{[n_0]}\times C$. Therefore
\beq{lineq}
(\AJ\times1)^*\cP(1)\,\cong\,\O(\cZ_0) \quad\mathrm{and\ so}\quad
\O(1)\,\cong\,\O(\cZ_0)\big|_{C^{[n_0]}\times\{c_0\}}
\eeq
by the normalisation condition \eqref{normali}. In particular the $\omega$ of \eqref{omdef} is the divisor class $\big[\cZ_0|_{C^{[n_0]}\times\{c_0\}}\big]$, and so is the same $\omega$ as appears in Corollary \ref{EB}.

The second tautological class we use is the pullback of the class of the theta divisor on $\Pic^{n_0}(C)$,
$$
\theta \in H^2(\Pic^{n_0}(C),\Z) \,\cong\,\Hom(\Lambda^2 H^1(C,\Z),\Z)
$$
which takes $\alpha,\beta\in H^1(C,\Z)$ to $\int_C \alpha \wedge \beta$.
We denote its pullback $\AJ^* \theta$ to $C^{[n_0]}$ by $\theta$ also.
 
\begin{proposition} \label{taut}
The integrand $\omega^ac_{-1/dt}(E)
\prod_{i=1}^{d-1} c_{-1/it}(F_i)$ in \eqref{integrand} can be written in terms of the tautological classes $\omega,\,\theta$ as
\beq{grand}
\omega^a\sum_{k=0}^{\infty} \left(1-\frac{\omega}{dt}\right)^{n_{d-1} - d \k^2 - k\ } \frac{(\theta/dt)^k}{k!}\ 
\prod_{i=1}^{d-1} \left(1 - \frac{\omega}{it} \right)^{\!\k^2-\delta_i}.
\eeq
\end{proposition}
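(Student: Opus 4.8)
The plan is to compute the Chern characters of the K-theory classes $E$ and $F_i$ of \eqref{EFA} by Grothendieck--Riemann--Roch along $\pi\colon C^{[n_0]}\times C\to C^{[n_0]}$, and then convert these into the twisted total Chern classes $c_{-1/dt}(E)$ and $c_{-1/it}(F_i)$ via the standard dictionary between $\ch$ and the Chern polynomial $c_s$. First I would record the relevant geometry. Under the running hypothesis $n_0>2h-2$ the map $\AJ$ exhibits $C^{[n_0]}=\PP(\pi_{2*}\cP)$, and \eqref{lineq} together with the normalisation \eqref{normali} gives $\O(\cZ_0)=(\AJ\times1)^*\cP\otimes\pi^*\O(1)$, so that on $C^{[n_0]}\times C$
$$
[\cZ_0]\ =\ \omega+\gamma+n_0\eta,
$$
where $\eta\in H^2(C)$ is the point class and $\gamma$ is the K\"unneth $(1,1)$-part of $c_1(\cP)$ (the normalisation kills its $(2,0)$-part). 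I would use the standard relations $\eta^2=\gamma\eta=0$, $\tfrac12\gamma^2=-\theta\eta$, the Todd class $\td(T_\pi)=\td(T_C)=1+(1-h)\eta$, and the fibre integrals $\pi_*\eta=1$, $\pi_*\gamma=\pi_*1=0$, $\pi_*(\theta\eta)=\theta$. Recall that on the cycle \eqref{cycle} the divisors $\Delta_i$, hence all $\Delta_{0,j}$, are fixed divisors on $C$.

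The computational core is a single lemma. For a fixed line bundle $M$ on $C$ of degree $m$, set $L:=\O(\cZ_0)\otimes p_C^*M$, so $c_1(L)=\omega+\gamma+(n_0+m)\eta$ and $e^{c_1(L)}=e^\omega\big(1+\gamma+(n_0+m)\eta-\theta\eta\big)$. Then GRR gives
$$
\ch(R\pi_*L)\ =\ \pi_*\big[e^{c_1(L)}\td(T_C)\big]\ =\ e^\omega\big(\chi(L)-\theta\big),\qquad \chi(L)=m+n_0+1-h,
$$
the only surviving fibre integrals being $\pi_*\eta=1$ and $\pi_*(\theta\eta)=\theta$. Both $E$ and the two summands of $F_i=R\pi_*[A_i-B_i]$, with $A_i=K_S|_C^{-(i-1)}(\cZ_0+\Delta_{0,i-1})$ and $B_i=A_i\otimes K_S|_C^{-1}(\Delta_i)=K_S|_C^{-i}(\cZ_0+\Delta_{0,i})$, are exactly of this shape, with $\chi$ equal to $n_{d-1}-d\k^2$, $n_{i-1}-i\k^2$ and $n_i-(i+1)\k^2$ respectively.

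To pass from $\ch$ to $c_s$ I would use the additive identity $\log c_s(G)=\sum_{k\ge1}(-1)^{k-1}(k-1)!\,s^k\ch_k(G)$. For $E$ one has $\ch_k(E)=\chi(E)\,\omega^k/k!-\theta\,\omega^{k-1}/(k-1)!$, and summing the two resulting series yields
$$
c_s(E)\ =\ (1+s\omega)^{\chi(E)}\exp\!\Big(\!-\tfrac{s\theta}{1+s\omega}\Big),\qquad \chi(E)=n_{d-1}-d\k^2.
$$
Setting $s=-1/dt$, expanding the exponential, and using $\tfrac{\theta}{dt-\omega}=\tfrac{\theta/dt}{1-\omega/dt}$ reproduces $\sum_k(1-\omega/dt)^{\,n_{d-1}-d\k^2-k}(\theta/dt)^k/k!$. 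For $F_i$ the decisive point is that the $\theta$-terms of $R\pi_*A_i$ and $R\pi_*B_i$ are identical and cancel in the difference, so $\ch(F_i)=(\k^2-\delta_i)e^\omega$ carries no $\theta$ and the log-identity collapses to $c_s(F_i)=(1+s\omega)^{\k^2-\delta_i}$, which at $s=-1/it$ is $(1-\omega/it)^{\k^2-\delta_i}$. Multiplying by $\omega^a$ and taking the product over $i$ gives \eqref{grand}.

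The main obstacle, and really the only place where care is required, is the bookkeeping: tracking the degrees $m$ and hence the ranks $\chi$ so that the exponents emerge as $n_{d-1}-d\k^2$ and $\k^2-\delta_i$, and recognising that it is precisely the difference structure of $F_i$ that cancels $\theta$ (turning those factors into honest powers of $1-\omega/it$), whereas the lone pushforward $E$ retains the $\theta$-exponential responsible for the sum over $k$. The negative exponents that arise are read via the convention \eqref{binomconv}. Everything else is the standard Jacobian/symmetric-product calculus; the hypothesis $n_0>2h-2$ enters only to make $\AJ$ a projective bundle, and its removal is deferred as stated.
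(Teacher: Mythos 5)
Your proposal is correct and is essentially the paper's own argument: both proofs run Grothendieck--Riemann--Roch along the universal curve using the K\"unneth decomposition of $c_1(\cP)$ (your $[\cZ_0]=\omega+\gamma+n_0\eta$, normalised by \eqref{normali}) together with $\tfrac12\gamma^2=-\theta\eta$, and both conclude that $E$ has rank $n_{d-1}-d\k^2$ with a single $\theta$-correction while each $F_i$ is cohomologically constant of rank $\k^2-\delta_i$ (your observation that the $\theta$-terms cancel in the difference $A_i-B_i$ is the content of the paper's remark that $F_i(-1)$ is a constant complex). The only divergence is bookkeeping: the paper untwists to $E(-1)$ and $F_i(-1)$, computes their Chern classes from the vanishing of the higher Chern characters, and re-twists via the identity \eqref{expand}, whereas you keep the twist and pass from $\ch$ to $c_s$ via the $\log c_s$ identity --- algebraically the same manipulation, since \eqref{expand} applied to $c_k=(-\theta)^k/k!$ reproduces your closed form $(1+s\omega)^{\rk}\exp\bigl(-s\theta/(1+s\omega)\bigr)$.
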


\begin{proof}
By \eqref{lineq} we see the complex $E$ \eqref{EFA} satisfies
\beq{complexx}
E(-1)\ =\ R\pi_{1*}\big((\AJ\times1)^*\cP\otimes K_{S}|_{C}^{-(d-1)}(\Delta_{0,d-1})\big),
\eeq
where we recall that we work with fixed $\Delta_{0,d-1} = \Delta_0 + \cdots + \Delta_{d-1}$. We begin by computing the Chern \emph{character} of this. By Grothendieck-Riemann-Roch,
\begin{eqnarray*}
\ch(E(-1)) &=& \pi_{1*}\big[\ch \big((\AJ\times1)^*\cP\otimes K_{S}|_{C}^{-(d-1)}(\Delta_{0,d-1})\big)\td(C)\big] \\ &=& \pi_{1*} \Big[\exp\!\big((\AJ\times1)^*c_1(\cP) -(d-1)\k+[\Delta_{0,d-1}]\big) (1+c_1(T_C)/2) \Big] \\
&=& n_0 - (d-1) \k^2 + (n_{d-1}-n_0) -\k^2-\theta \\
&=& n_{d-1} - d\,\k^2 - \theta,
\end{eqnarray*}
where we have identified $c_1(\cP)$ with
\beqa
\big(0,\id,n_0[c_0]\big) &\in&
H^2(\Pic^{n_0}(C))\ \oplus\ \big(H^1(C)^* \otimes H^1(C)\big)\ \oplus\ H^2(C) \\ &=& H^2\big(\Pic^{n_0}(C) \times C\big)
\eeqa
using the normalisation condition \eqref{normali}. We also used the (pullback by $\AJ\times1$ of the) standard identity \cite[Section VIII.2]{ACGH}
$$
\frac{1}{2}\pi_{2*}\big(\id^{\wedge2}\big)\,=\,-\theta.
$$
Therefore \eqref{complexx} has rank $n_{d-1}-d\,\k^2$, first Chern class $-\theta$, and all higher Chern \emph{characters} vanish. From this we deduce that
\beq{cherns}
c_k\ =\ \frac{(-\theta)^k}{k!} \quad \mathrm{for \ all \ } k>0.
\eeq
So now applying the identity
\beq{expand}
c_s(V(1))\ =\ \sum_{k=0}^{\infty} (1+\omega s)^{\rk(V)-k} c_k(V) s^k
\eeq
to \eqref{cherns} we obtain
$$
c_{-1/dt}(E)\ =\ 
\sum_{k=0}^{\infty} \left(1-\frac{\omega}{dt}\right)^{n_{d-1} - d \k^2 - k} \frac{(-\theta)^k}{k!} \left(\frac{-1}{dt}\right)^{\!\!k}.
$$
This gives the first term of the integrand. The second is easier. By \eqref{lineq} again,
$$
F_i(-1)\ =\ R \pi_{1*} \big[ (\AJ\times1)^*\cP \otimes K_{S}|_{C}^{-(i-1)} (\Delta_{0,i-1}) \otimes \big\{ \O_C - K_{S}|_{C}^{-1}(\Delta_i) \big\}\big].
$$
By Proposition \ref{vircycfinal}, each $\Delta_i \subset C^{[n_0]} \times C$ pulls back from $C$. In the (numerical) K-group we can write
\begin{align*}
\O_C - K_{S}|_{C}^{-1}(\Delta_i) &= \O_C - K_{S}|_{C}^{-1} - \delta_i \cdot \O_{c} \\
&= (\k^2 - \delta_i) \cdot \O_{c},
\end{align*}
where $c \in C$ is any point and $\k^2 = \deg K_S|_C$. Therefore, by the normalisation condition \eqref{normali}, $F_i(-1)$ equals $(\k^2-\delta_i)\cdot \O_{C^{[n_0]}}$ in the K-group. 
By \eqref{expand} we find
\[
c_{-1/it}(F_i) = \left(1-\frac{\omega}{it} \right)^{\!\k^2-\delta_i}. \qedhere
\]
\end{proof}

\section{Evaluation of the integral} \label{step3}

Still working under the assumption $n_0 > 2h-2$ for the time being, we can now compute the integral in \eqref{integrand}.

\begin{proposition} \label{answerint}
The integral of \eqref{grand} over $C^{[n_0]}$ is
$$
\sum(dt)^{-n_0+a} \binom{n_0 - n_{d-1} + (d+1)\k^2 - a - |{\bf j}|}{n_0 - a - |{\bf j}|} \prod_{i=1}^{d-1} \left(\frac{-d}{i}\right)^{\!j_i\!\!} \binom{\k^2-\delta_i}{j_i},
$$ 
where the sum is over all $j_1, \ldots, j_{d-1} \geq 0$, and we set $|{\bf j}|:=j_1 + \cdots + j_{d-1}$.
\end{proposition}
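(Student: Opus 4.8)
The plan is to reduce the integral to the two standard tautological integrals on the symmetric product and then carry out the resulting double sum by a Vandermonde convolution. First I would record the only geometric input required. Since $n_0>2h-2$, the bundle $\pi_{2*}\cP$ has rank $n_0-h+1$, and the Grothendieck--Riemann--Roch computation of Proposition \ref{taut}, applied to $\cP$ itself, shows its Chern character is $(n_0-h+1)-\theta$ with all higher terms vanishing; hence $c(\pi_{2*}\cP)=e^{-\theta}$ and $s(\pi_{2*}\cP)=e^{\theta}$. Combining the projective-bundle pushforward $\AJ_*\,\omega^{(n_0-h)+j}=s_j(\pi_{2*}\cP)=\theta^j/j!$ with Poincar\'e's formula $\int_{\Pic^{n_0}(C)}\theta^h=h!$ yields
$$
\int_{C^{[n_0]}}\omega^{n_0-q}\,\theta^q=\frac{h!}{(h-q)!}\qquad(0\le q\le h),
$$
and zero otherwise; equivalently $\int_{C^{[n_0]}}\omega^{n_0-q}\theta^q/q!=\binom{h}{q}$. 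This is the content of \cite{ACGH}, Lemma VIII.2.5.

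Next I would expand the integrand \eqref{grand}. The only $\theta$-dependence sits in the factor $(\theta/dt)^k/k!$, so a monomial contributes to the top-degree integral precisely through its $\theta^k\,\omega^{n_0-k}$ part. Expanding $(1-\omega/dt)^{n_{d-1}-d\k^2-k}$ and each $(1-\omega/it)^{\k^2-\delta_i}$ by the binomial theorem with the convention \eqref{binomconv}, the coefficient of $\omega^{n_0-k}$ forces the exponent of $(1-\omega/dt)$ to equal $n_0-k-a-|{\bf j}|$, where $j_i$ is the exponent drawn from the $i$-th factor. Integrating term by term, using $\int\omega^{n_0-k}\theta^k=h!/(h-k)!=k!\binom{h}{k}$, leaves
$$
\sum_{k,\,{\bf j}}\binom{h}{k}\frac{1}{(dt)^k}\binom{n_{d-1}-d\k^2-k}{\,n_0-k-a-|{\bf j}|\,}\Big(\tfrac{-1}{dt}\Big)^{n_0-k-a-|{\bf j}|}\prod_{i=1}^{d-1}\binom{\k^2-\delta_i}{j_i}\Big(\tfrac{-1}{it}\Big)^{j_i}.
$$

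The decisive step is then to perform the inner sum over $k$ for each fixed ${\bf j}$. After pulling out the ${\bf j}$-dependent prefactors and assembling the powers of $dt$ into $(dt)^{-(n_0-a-|{\bf j}|)}$ together with a sign $(-1)^{n_0-a-|{\bf j}|}$, what remains is $\sum_k(-1)^k\binom{h}{k}\binom{M-k}{N-k}$ with $M=n_{d-1}-d\k^2$ and $N=n_0-a-|{\bf j}|$. I would evaluate this by the generating-function identity $\sum_k(-1)^k\binom{h}{k}\binom{M-k}{N-k}=\binom{M-h}{N}$, read off from $(1+x)^M\,(x/(1+x))^h=(1+x)^{M-h}x^h$. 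Since $M-h=n_{d-1}-(d+1)\k^2-1<0$ in the relevant range, the convention \eqref{binomconv} rewrites $\binom{M-h}{N}$ as $(-1)^N\binom{n_0-n_{d-1}+(d+1)\k^2-a-|{\bf j}|}{n_0-a-|{\bf j}|}$, whose sign cancels the leftover $(-1)^{N}=(-1)^{n_0-a-|{\bf j}|}$. Finally, reorganising $(dt)^{-(n_0-a-|{\bf j}|)}$ and the factors $\prod_i(-1/it)^{j_i}$ into $(dt)^{-n_0+a}\prod_i(-d/i)^{j_i}$ delivers exactly the claimed formula. The main obstacle is purely bookkeeping: tracking the signs, the separate powers of $d$ and $t$, and especially the negative-index binomial conversion, so that the two factors of $(-1)^{n_0-a-|{\bf j}|}$ cancel and each summand lands in the stated positive-index form.
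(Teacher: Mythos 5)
Your proof is correct and is essentially the paper's own argument: binomial expansion of \eqref{grand}, integration term by term via the Poincar\'e formula \eqref{Poin}, and collapsing the $k$-sum by Chu--Vandermonde --- the paper first converts $\binom{M-k}{N-k}$ to positive-index form using \eqref{binomconv} and then applies $\sum_k\binom{a}{c-k}\binom{b}{k}=\binom{a+b}{c}$, which is exactly your alternating-sign identity $\sum_k(-1)^k\binom{h}{k}\binom{M-k}{N-k}=\binom{M-h}{N}$ carried out in the opposite order. Two cosmetic slips worth fixing: the generating function justifying that identity should be $(1+x)^M\left(1-\frac{x}{1+x}\right)^{h}=(1+x)^{M-h}$ rather than $(1+x)^M\left(\frac{x}{1+x}\right)^{h}$ (the latter would yield $\binom{M-h}{N-h}$), and your final conversion does not actually need $M-h<0$ --- which can fail since $n_{d-1}$ may be arbitrarily large --- because the convention \eqref{binomconv} implies $\binom{n}{k}=(-1)^k\binom{k-n-1}{k}$ for every $n\in\Z$, $k\ge0$.
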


\begin{proof}
Expanding \eqref{grand} by the binomial theorem using the convention \eqref{binomconv} gives the sum over all $k,l,j_1, \ldots, j_{d-1} \geq 0$ of
$$
\left[\Big(\frac{1}{dt}\Big)^k \Big(\frac{-1}{dt}\Big)^l \binom{n_{d-1} - d\,\k^2 - k}{l} \prod_{i=1}^{d-1} \Big(\frac{-1}{it}\Big)^{j_i} \binom{\k^2-\delta_i}{j_i} \right] \frac{\theta^k}{k!}\,\omega^{a+l+|{\bf j}|}.
$$
We can now integrate over $C^{[n_0]}$ using \cite[Section VIII.3]{ACGH}:
\beq{Poin}
\int_{C^{[n_0]}} \frac{\theta^k}{k!}\, \omega^{n_0-k}\ =\ \binom{h}{k}, \ \ \textrm{for all } k \in [0,n_0],
\eeq
where $h=\k^2+1$ is the genus of $C$. This gives the sum over all $j_1, \ldots, j_{d-1} \geq 0$ and $k\in[0,n_0]$ of
$$
\Big(\frac{1}{dt}\Big)^k \Big(\frac{-1}{dt}\Big)^{n_0-a-k-|{\bf j}|}
{h\choose k}\binom{n_{d-1} - d\,\k^2 - k}{n_0 - a - k - |{\bf j}|} \prod_{i=1}^{d-1}
\Big(\frac{-1}{it}\Big)^{j_i} \binom{\k^2-\delta_i}{j_i}.
$$
We can sum over all $k\ge0$ since ${h\choose k}=0$ for $k>n_0\ge2h-1\ge h$ when $h \geq 1$ (and when $h=0$ it is also clear we can sum over all $k \geq 0$). So using
$$
\binom{a}{b} \,=\, (-1)^b \binom{b-a-1}{b}
$$
we get the sum over all $k,j_1, \ldots, j_{d-1} \geq 0$ of
$$
(dt)^{-n_0+a+|{\bf j}|}\binom{h}{k}\!\binom{n_0-a-|{\bf j}| - n_{d-1}+d\k^2 -1}{n_0 - a - k - |\bf j|} \prod_{i=1}^{d-1}\left(\!\frac{-1}{it}\right)^{\!j_i\!} \binom{\k^2-\delta_i}{j_i}.
$$
Summing over $k$ using the Chu-Vandermonde identity
$$
\sum_{k = 0}^{\infty} \binom{a}{c-k} \binom{b}{k} = \binom{a+b}{c}
$$
gives the claimed formula.
\end{proof}

\subsection{Extension to all $n_0$} \label{universal}
We established Proposition \ref{answerint} assuming $n_0 > 2h-2$. However the answer holds for \emph{any} $n_0$. For general $n_0$, pick $N > n_0$ such that $N > 2h-2$. Then $C^{[N]} \cong \PP(\pi_{2*} \cQ)$, where $\cQ$ is the normalised Poincar\'e bundle on $\Pic^N(C)\times C$. We can embed\footnote{The method described here was used in the case of the Hilbert scheme of curves on surfaces in \cite{DKO} and also \cite{KT2}.}
\beqa
C^{[n_0]} &\Into& C^{[N]}, \\
Z_0 &\Mapsto& Z_0+(N-n_0)c_0.
\eeqa
Denote the universal divisor on $C^{[N]} \times C$ by $\cW$, and let $s\in H^0(\O(\cW))$ be the section cutting it out. Then $C^{[n_0]}\subset C^{[N]}$ is the locus of effective divisors containing $(N-n_0)c_0$; i.e.~it is the locus where $s$ vanishes on restriction to the Artinian thickened point $(N-n_0)c_0$. Denote the restriction to $C^{[N]} \times (N-n_0)c_0$ of $\pi_2 : C^{[N]} \times C \rightarrow C^{[N]}$ by $\pi_2$ as well. Then $\pi_{2*}(s|_{C^{[N]} \times(N-n_0)c_0})$ defines a section of the locally free sheaf
\[
F := \pi_{2*}\big(\O(\cW)|_{C^{[N]} \times (N-n_0)c_0}\big)
\]
which cuts out $C^{[n_0]}$. The rank of $F$ is the codimension of $C^{[n_0]}$, so it is a regular section and we can identify the normal bundle
$$N_{C^{[n_0]} / C^{[N]}} \,\cong\, F|_{C^{[n_0]}}$$
and the cycle class
\beq{cla}
\big[C^{[n_0]}\big]\ =\ \big[c_{N-n_0}(F)\big]\in H_{2(N-n_0)}\big(C^{[N]}\big).
\eeq
All results of Sections \ref{step2} and \ref{step3} can be obtained by pushing forward to $C^{[N]}$ and then pushing down $\AJ$ using the commutative diagram
$$\xymatrix@C=65pt{
C^{[n_0]} \ar[d]^{\AJ}\INTO^{+(N-n_0)c_0} & C^{[N]} \ar[d]^{\AJ} \\
\Pic^{n_0}(C) \ar[r]^{\otimes\O((N-n_0)c_0)\ }_\simeq& \Pic^N(C).}
$$
Pushing forward to $C^{[N]}$ introduces the class \eqref{cla}, while
$\cP$ gets replaced by $\cQ(-(N-n_0)c_0)$ and $\cZ_0$ gets replaced by $\cW - (N-n_0)c_0$. The calculation proceeds in exactly the same manner except for one difference: the usual relation $\AJ_*\omega^{i+n_0-h}=\theta^i/i!$ that goes into the Poincar\'e formula \eqref{Poin} for $n_0>2h-2$ is replaced by the identity
\begin{equation*}
\AJ_*\big(c_{N-n_0}(F) \omega^{i+n_0-h}\big)\ =\ \left\{\!\!\begin{array}{cl} \frac{\theta^i}{i!} & \mathrm{if} \ i \geq 0, \\ 0 & \mathrm{otherwise}; \end{array} \right.
\end{equation*}
see for instance \cite[Lemma 4.3]{KT2}.\footnote{Although \cite[Lemma 4.3]{KT2} is derived for the Hilbert scheme of curves on a surface the same formula holds in the (easier) setting of the Hilbert scheme of points on a curve.} This removes the extra class \eqref{cla} and produces the same formulae as for $n_0>2h-2$.

\section{Final formula without descendents} \label{step4}

Plugging Proposition \ref{answerint} into \eqref{integrand} evaluates
$\sfZ_{d\k}(X,\tau_{\alpha_1}(D_1) \cdots \tau_{\alpha_m}(D_m))_{\ver}$ as the sum over all $a,\,j_1, \ldots, j_{d-1} \geq 0$ and all $0 \leq n_0 \leq \cdots \leq n_{d-1}$ of
\begin{multline*}
(-1)^{\chi(\O_S)+\frac{1}{2}d(d-1)\k^2 + \sum_{i=0}^{d-1} n_i\ } \bigg( \frac{d!}{d^d} \bigg)^{\!\!\k^2}(-d)^{n_{d-1}} \prod_{i=1}^{d-1}\left[i^{\,-\delta_i}{\k^2\choose\delta_i}\!\right]
\,t^{n_0}\prod_{j=1}^{m} (\k \cdot\! D_j) \\
\times q^{\chi\,}\gamma_a
(dt)^{-n_0+a} \binom{n_0 - n_{d-1} + (d+1)\k^2 - a - |{\bf j}|}{n_0 - a - |{\bf j}|} \prod_{i=1}^{d-1} \left(\frac{-d}{i}\right)^{\!j_i\!\!} \binom{\k^2-\delta_i}{j_i}.
\end{multline*}
Here the exponent of $q$ is
$$
\chi\ =\ \sum_{i=0}^{d-1}\big(n_i - (i+1) \k^2\big)\ =\ dn_0+\sum_{i=1}^{d-1}(d-i)\delta_i-\frac12d(d+1)\k^2.
$$
We combine the first and third products, collect powers of $d$ and $t$, and write each $n_i$ as $n_0+\delta_1+\ldots+\delta_i$. The result is the sum over $a,n_0\ge0$ and all $j_i,\delta_i\ge0$ of
\begin{multline*}
(-1)^{\chi(\O_S)+\frac{1}{2}d(d-1)\k^2}\bigg( \frac{d!}{d^d} \bigg)^{\!\!\k^2}
\left[\prod_{i=1}^{d-1}{\k^2\choose\delta_i}\binom{\k^2-\delta_i}{j_i}
\left(\frac{-d}i\right)^{\!\delta_i+j_i}(-q)^{(d-i) \delta_i}\!\right]
\\ \times q^{-\frac{1}{2}d(d+1)\k^2}\left[\prod_{j=1}^m(\k\cdot\!D_j)\right]\!\gamma_a
(dt)^a \binom{(d+1)\k^2- |{\bf \delta}| - a - |{\bf j}|}{n_0 - a - |{\bf j}|}\big(\!-(-q)^d\big)^{n_0},
\end{multline*}
where we have used $|\delta|$ to denote $\delta_1+\ldots+\delta_{d-1}=n_{d-1}-n_0$.

Remarkably this horrible-looking expression can be summed. The sum over $n_0$ only involves the last 2 terms; using our convention \eqref{binomconv} it takes the form
$$
C\sum_{n_0\ge0}\binom{r}{n_0-s}x^{n_0}\ =\ Cx^s(1+x)^r.
$$
This replaces the last two terms with
$$
\big(\!-(-q)^d\big)^{\!a+|{\bf j}|}\big(1-(-q)^d\big)^{(d+1)\k^2- |{\bf \delta}| - a - |{\bf j}|}.
$$
Setting $Q:=-q$, we write this as
$$
\big(\!-Q^d\big)^{\!a}(1-Q^d)^{2\k^2 - a}\prod_{i=1}^{d-1}(-Q^d)^{j_i}(1-Q^d)^{\k^2-\delta_i-j_i}.
$$
Combining with the $\binom{\k^2-\delta_i}{j_i}\left(\frac{-d}i\right)^{\!j_i}$ term we can now sum over $j_i\ge0$ using the binomial theorem again to give
\begin{multline*}
(-1)^{\chi(\O_S)+\frac{1}{2}d(d-1)\k^2}\bigg( \frac{d!}{d^d} \bigg)^{\!\!\k^2}
\left[\prod_{i=1}^{d-1}{\k^2\choose\delta_i}\left(\frac{-d}i\right)^{\!\delta_i}
\left((1-Q^d)+\frac{dQ^d}i\right)^{\!\!\k^2-\delta_i}\!\!Q^{(d-i) \delta_i}\!\right]
\\ \times (-Q)^{-\frac{1}{2}d(d+1)\k^2}\left[\prod_{j=1}^m(\k\cdot\!D_j)\right]\!\gamma_a
(dt)^a\big(\!-Q^d\big)^{\!a}(1-Q^d)^{2\k^2 - a}.
\end{multline*}
Moving $\big((d-1)!\big)^{\k^2}=\prod_{i=1}^{d-1}i^{\,\k^2}$
inside the product gives
\begin{multline*}
(-1)^{\chi(\O_S)+d\,\k^2}\bigg( \frac d{d^d} \bigg)^{\!\!\k^2}
\left[\prod_{i=1}^{d-1}{\k^2\choose\delta_i}
\big(i(1-Q^d)+dQ^d\big)^{\!\k^2-\delta_i}(-dQ^{(d-i)})^{\delta_i}\!\right]
\\ \times Q^{-\frac{1}{2}d(d+1)\k^2}\left[\prod_{j=1}^m(\k\cdot\! D_j)\right]\!\gamma_a
(dt)^a\big(\!-Q^d\big)^{\!a}(1-Q^d)^{2\k^2 - a}.
\end{multline*}
So now we can sum over all $\delta_i\ge0$ (by the binomial theorem again) and $a\ge0$ to give the full expression:
\begin{multline*}
(-1)^{\chi(\O_S)+d\,\k^2}\bigg( \frac1d\bigg)^{\!\!(d-1)\k^2}
\left[\prod_{i=1}^{d-1}\big(i(1-Q^d)+dQ^d-dQ^{d-i}\big)^{\k^2}\!\right]
\\ \times\left[\prod_{i=1}^{d-1}Q^{-\frac12d\k^2}\right]Q^{-d\,\k^2}(1-Q^d)^{2\k^2}
\left[\prod_{j=1}^m(\k\cdot\! D_j)\right]\!\sum_{a\ge0}\gamma_a
(dt)^a\left(\!\frac{-Q^d}{1-Q^d}\right)^{\!\!a}.
\end{multline*}
Combining the first two products gives
\begin{align} \nonumber
(-1)^{\chi(\O_S)+d\,\k^2}\bigg( \frac1d \bigg)^{\!\!(d-1)\k^2}\!\!\big(Q^{-d/2}-Q^{d/2}\big)^{\!2\k^2}\
&\prod_{i=1}^{d-1}\Big((d-i)Q^{d/2}-dQ^{d/2-i}+iQ^{-d/2}\Big)^{\!\k^2}
\\ &\times\left[\prod_{j=1}^m(\k\cdot\! D_j)\right]\!\sum_{a\ge0}\gamma_a
\left(\!\frac{dtQ^d}{Q^d-1}\right)^{\!a}. \label{atlast}
\end{align}
When there are no insertions the second line is 1 and we have determined $\sfZ^P_{d\k}(X)_{\ver}$. There is no $t$-dependence, of course, because the virtual dimension is already 0. This proves the first half of Theorem \ref{main}.

\section{Final formula with descendents}

Finally we compute the insertion term in \eqref{atlast}. We recall the definition of the coefficients $\gamma_a$ \eqref{gamma},
\begin{align*}
\sum_{a=0}^{\infty} \gamma_a X^a\ &=\ \prod_{i=1}^{m} \Big[E(t) \sum_{j=0}^{d-1} e^{X - j t} \Big]_{2\alpha_i} \\
&=\ \prod_{i=1}^{m} \sum_{j=0}^{d-1} \sum_{k=0}^{\alpha_i} \frac{(-t)^k}{(k+1)!} \big[ e^{X - j t} \Big]_{2(\alpha_i-k)} \hspace{2cm}
\end{align*}
\begin{align*}
\hspace{3cm} &=\ \prod_{i=1}^{m} \sum_{j=0}^{d-1} \sum_{k=0}^{\alpha_i} \frac{(-t)^k}{(k+1)!} \frac{1}{(\alpha_i - k)!} (X-jt)^{\alpha_i-k} \\
&=\ -\prod_{i=1}^{m} \frac{t^{\alpha_i}}{(\alpha_i+1)!}\sum_{j=0}^{d-1} \sum_{k=0}^{\alpha_i} \binom{\alpha_i+1}{\alpha_i - k}(-1)^{k+1} (Xt^{-1}-j)^{\alpha_i-k} \\
&=\ -\prod_{i=1}^{m} \frac{t^{\alpha_i}}{(\alpha_i+1)!}\sum_{j=0}^{d-1}
\left[(Xt^{-1}-j-1)^{\alpha_i+1}-(Xt^{-1}-j)^{\alpha_i+1}\right]
\end{align*}
by the binomial theorem. All terms of the sum cancel except for $j=0,d-1$, leaving
$$
\sum_{a=0}^{\infty} \gamma_a X^a\ =\ t^{|\alpha|}\prod_{j=1}^{m}
\frac{(Xt^{-1})^{\alpha_j+1}-(Xt^{-1}-d)^{\alpha_j+1}}{(\alpha_j+1)!}\,.
$$
Substituting
$$
X\ =\ \frac{dtQ^d}{Q^d-1}\ =\ \frac{dtQ^{d/2}}{Q^{d/2}-Q^{-d/2}}
$$
from the second line of \eqref{atlast} gives
$$
\sum_{a=0}^{\infty} \gamma_a X^a\ =\ t^{|\alpha|}\prod_{j=1}^{m}
\frac{d^{\alpha_j+1}}{(\alpha_j+1)!}\ 
\frac{Q^{d(\alpha_j+1)/2}-Q^{-d(\alpha_j+1)/2}}{(Q^{d/2}-Q^{-d/2})^{\alpha_j+1}}\,.
$$
Substituting this into \eqref{atlast} gives the proof of the second half of Theorem \ref{main}.

\begin{remark} \label{generalinsertions}
Consider \eqref{ZPver} for any insertion of the form
\[
\prod_{j=1}^{m_1} \tau_{\alpha_j}(D_j) \prod_{j=1}^{m_2} \tau_{\beta_j}(1),
\]
where $D_1, \ldots, D_{m_1} \in H_2(S)$ and $1 \in H^0(S)$. Recall the projections $\pi_1 : C^{[n_0]} \times C \rightarrow C^{[n_0]}$ and $\pi_2 : \Pic^{n_0}(C) \times C \rightarrow \Pic^{n_0}(C)$ of Section \ref{step2}. Expanding the explicit expression for the descendent integrands in Proposition \ref{E(x)} reduces \eqref{ZPver} to a linear combinations of integrals of the form
\[
\int_{C^{[n_0]}} \frac{1}{e(N^{\vir})}\Big[\cZ_0|_{C^{[n_0]} \times\{c_0\}}\Big]^a \, \prod_k \pi_{1*}  \big( [\cZ_0]^{b_k} \big),
\]
for some $a,b_k \geq 0$. Using $\O(\cZ_0) \cong (\AJ \times \id)^* \cP(1)$ gives
\begin{multline*}
At^{n_0} \int_{C^{[n_0]}}\eqref{grand} \, \prod_k \pi_{1*} \big( (\AJ \times \id)^*(\id+n_0[c_0]) + \pi_{1}^{*} \omega \big)^b \\ =\ At^{n_0}  \int_{C^{[n_0]}} \eqref{grand} \, \prod_k \big( \omega^{b_k} + b_k n_0 \omega^{b_k-1} - b_k (b_k-1) \omega^{b_k-2} \theta \big),
\end{multline*}
where \eqref{grand} is the same as before, and $A$ is the constant defined in \eqref{EFA}.
These integrals can be performed using the Poincar\'e formula \eqref{Poin} as before.
In this generality we are unable to re-sum the resulting expression to a closed formula.
\end{remark}

\section{Links to Gromov-Witten theory of $X$} \label{GWX}

In this Section we apply our results for stable pairs to Gromov-Witten theory, via the descendent-MNOP conjecture of Pandharipande-Pixton \cite{PP1}. We let
$$
\overline{M}_{\!g,m}^{\,\bullet}(S,\beta)\ =\ \overline{M}_{\!g,m}^{\,\bullet}(X,\iota_*\beta)^T
$$
be the moduli space of $m$-pointed stable maps of genus $g$ curves to $S$ in class $\beta$. The superscript ${\,\!}\udot$ indicates that we allow disconnected curves, but only stable maps which contract no connected components. The moduli space coincides --- as a Deligne-Mumford stack with perfect obstruction theory \cite[Proposition 3.2]{KT1} --- with the $T$-fixed locus of the corresponding moduli space of maps to $X$. As such it inherits a virtual normal bundle $N^\vir$ described, for instance, in \cite[Proposition 3.2]{KT1}, and we can define descendent invariants of $X$ by residues:
\beq{GWXdef}
N\udot_{g,\beta}(X,\tau_{\alpha_1}(\sigma_1) \cdots \tau_{\alpha_m}(\sigma_m))\ :=\
\int_{[\overline{M}_{\!g,m}^{\,\bullet}(S,\beta)]^{\vir}} \frac{1}{e(N^\vir)} \prod_{j=1}^{m} \tau_{\alpha_j}(\sigma_j).
\eeq
Here the descendent classes are defined in the usual way by
$$
\tau_{\alpha_j}(\sigma_j)\ :=\ \psi_{j}^{\alpha_j} \ev_{j}^{*} \sigma_j,
$$
where the $j$th $\psi$-class $\psi_j$ is the first Chern class of the cotangent line to the curve at the $j$th marked point.
Their generating function is 
\begin{equation*} \label{genfun2}
\sfZ_{\beta}^{GW}(X,\tau_{\alpha_1}(\sigma_1) \cdots \tau_{\alpha_m}(\sigma_m))\ :=\ \sum_g
N\udot_{g,\beta}(X,\tau_{\alpha_1}(\sigma_1) \cdots \tau_{\alpha_m}(\sigma_m))u^{2g-2},
\end{equation*}
where $g$ can be negative in disconnected Gromov-Witten theory.

When all descendence degrees are zero, the MNOP conjecture \cite{MNOP,PT1} states that $\sfZ_{\beta}^P(q)$ is a rational function invariant under $q\leftrightarrow q^{-1}$, and that substituting $q=-e^{iu}$ gives the Gromov-Witten generating function:
$$
\sfZ_{\beta}^{GW}(X,\tau_{0}(\sigma_1) \cdots \tau_{0}(\sigma_m))(u)\ =\ \sfZ_{\beta}^P(X,\tau_{0}
(\sigma_1) \cdots \tau_{0}(\sigma_m))(-e^{iu}).
$$
Therefore Theorem \ref{main0} gives the following obvious vanishing in Gromov-Witten theory. Since this can be proved more easily and directly by cosection localisation applied to $[\overline{M}_{\!g,m}^{\,\bullet}(S,\beta)]^{\vir}$ \cite{KL3}, it should perhaps be seen as a confirmation of the MNOP conjecture in this case.

\begin{proposition} \label{GWmain0'}
Suppose $S$ has a reduced irreducible canonical divisor. If the MNOP conjecture holds for $X = \mathrm{Tot}(K_S)$, then 
$$\sfZ_{\beta}^{GW}(X,\tau_{0}(\sigma_1) \cdots \tau_{0}(\sigma_m)) = 0,$$
unless $\beta$ is an integer multiple of the canonical class $\k$ and all $\sigma_j$ lie in $H^{\le2}(S)$.
\end{proposition}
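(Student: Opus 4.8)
The plan is to deduce the Gromov--Witten vanishing directly from the corresponding stable pairs vanishing, Theorem \ref{main0}, by transporting it across the assumed MNOP correspondence. First I would fix a class $\beta$ and insertions $\sigma_1,\ldots,\sigma_m$ violating one of the two conditions in the statement --- either $\beta$ is not an integer multiple of $\k$, or some $\sigma_j$ lies in $H^{\ge3}(S)$. These are exactly the circumstances covered by Theorem \ref{main0}, applied with all descendence degrees equal to zero. Hence $P_{\chi,\beta}(X,\tau_0(\sigma_1)\cdots\tau_0(\sigma_m))=0$ for every $\chi\in\Z$.

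Next I would package these pointwise vanishings into the stable pairs generating series \eqref{ZPfull}. Since $\sfZ_\beta^P(X,\tau_0(\sigma_1)\cdots\tau_0(\sigma_m))(q)=\sum_{\chi}P_{\chi,\beta}(X,\tau_0(\sigma_1)\cdots\tau_0(\sigma_m))\,q^\chi$ and every coefficient is zero, the whole series vanishes identically as an element of $\Q[t,t^{-1}](\!(q)\!)$. I would then invoke the MNOP correspondence, which we are assuming holds for $X=\mathrm{Tot}(K_S)$: in the primary (descendence-free) case it asserts that $\sfZ_\beta^P$ is a rational function of $q$ and that, under the substitution $q=-e^{iu}$, it coincides with the Gromov--Witten series $\sfZ_\beta^{GW}$. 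As the rational function on the stable pairs side is identically zero, so is its specialisation at $q=-e^{iu}$, yielding $\sfZ_\beta^{GW}(X,\tau_0(\sigma_1)\cdots\tau_0(\sigma_m))=0$, as required.

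I do not expect any substantive obstacle: the mathematical content is carried entirely by Theorem \ref{main0} together with the assumed correspondence, and the only thing to verify is the routine fact that vanishing of every $q$-coefficient forces the series, and hence its specialisation at $q=-e^{iu}$, to vanish. For this reason the statement is perhaps best read as a consistency check on MNOP rather than as a new theorem. Indeed, the same vanishing can be obtained intrinsically on the Gromov--Witten side, without the correspondence, by applying Kiem--Li cosection localisation \cite{KL3} directly to $[\overline{M}_{g,m}^{\,\bullet}(S,\beta)]^{\vir}$; the cosection built from $\theta\in H^0(K_S)$ is nonzero away from curves supported over the canonical divisor, forcing the localised virtual class to vanish except in the classes $\beta=d\k$.
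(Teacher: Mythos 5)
Your proposal is correct and follows exactly the paper's own argument: the paper deduces this proposition immediately from Theorem \ref{main0} via the primary MNOP correspondence $\sfZ_{\beta}^{GW}(u)=\sfZ_{\beta}^{P}(-e^{iu})$, since the stable pairs series vanishes identically coefficient by coefficient. Your closing remark also mirrors the paper, which notes that the statement can be proved more directly by Kiem--Li cosection localisation on $[\overline{M}_{\!g,m}^{\,\bullet}(S,\beta)]^{\vir}$ and is therefore best viewed as a consistency check on the MNOP conjecture.
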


Since the descendent-MNOP correspondence is linear, we may apply it to only the vertical contribution $Z^P_{\ver}$ to the stable pair generating function to give a ``vertical" contribution
$$
\sfZ_{d\k}^{GW}(X,\tau_{\alpha_1}(D_1) \cdots \tau_{\alpha_m}(D_m))_{\ver}
$$
to the Gromov-Witten generating function. We first study this for degree 0 insertions using the MNOP correspondence \eqref{MNOPP}.

\begin{proposition} \label{GWmain'}
Suppose $S$ has a smooth connected canonical divisor of genus $h=\k^2+1$, and the MNOP conjecture holds for $X = \mathrm{Tot}(K_S)$. Let $\mathsf P_d$ denote the product
$$
\prod_{j=1}^{\big\lfloor{\frac{d-1}2}\big\rfloor}\!2^{h-1}\Big[ d^2 + j^2 -jd +j(d-j) \cos(du) -d(d-j) \cos(ju) - jd \cos((d-j)u) \Big]^{h-1}\!.\!
$$
Then $\sfZ_{d\k}^{GW}(X)_{\ver}$ equals
$$
(-1)^{\chi(\O_S)}(-d)^{(h-1)(1-d)} \Big[ 2 \sin \Big( \frac{du}{2} \Big) \Big]^{2h-2} \Big[ d \cos \Big(\frac{du}{2} \Big) - d \Big]^{h-1}\mathsf P_d
$$
for $d$ even, and
$$
(-1)^{\chi(\O_S)}(-d)^{(h-1)(1-d)} \Big[ 2 \sin \Big( \frac{du}{2} \Big) \Big]^{2h-2}
\mathsf P_d
$$
for $d$ odd.  Furthermore
\beq{diveq}
\sfZ_{d\k}^{GW}(X,\tau_{0}(D_1) \cdots \tau_{0}(D_m))_{\ver}\ =\ \sfZ_{d\k}^{GW}(X)_{\ver}\prod_{j=1}^{m} (d\k \cdot\! D_j).
\eeq
For $d=1,2$ these are the complete 3-fold generating functions.
\end{proposition}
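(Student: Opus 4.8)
The plan is to read off Proposition~\ref{GWmain'} directly from the closed product formula of Theorem~\ref{main} by applying the degree-zero MNOP substitution. In the variable $Q=-q$ this substitution is $q=-e^{iu}$, equivalently $Q=e^{iu}$, and it extracts $\sfZ^{GW}_{d\k}(X)_{\ver}(u)$ from the rational function $\sfZ^P_{d\k}(X)_{\ver}(q)$ computed in Theorem~\ref{main}. So the first step is simply to substitute $Q^{\pm d/2}=e^{\pm i du/2}$ into that formula and simplify each factor with Euler's formula. The factor $(Q^{d/2}-Q^{-d/2})^{2h-2}$ becomes $\bigl(2i\sin(du/2)\bigr)^{2h-2}=(-1)^{h-1}\bigl[2\sin(du/2)\bigr]^{2h-2}$, which produces the sine factor in the claimed answer at the cost of a sign $(-1)^{h-1}$ to be absorbed later.

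The key step is the product $\prod_{i=1}^{d-1}\bigl((d-i)Q^{d/2}-dQ^{d/2-i}+iQ^{-d/2}\bigr)^{h-1}$. Relabelling the index to $j$, I would write the $j$th base factor as
\[
T_j:=(d-j)Q^{d/2}-dQ^{d/2-j}+jQ^{-d/2}.
\]
A short computation with $Q=e^{iu}$ shows that the factors pair up as $T_{d-j}=\overline{T_j}$, so each pair contributes the real number $T_jT_{d-j}=|T_j|^2$. Expanding $|T_j|^2=T_j\overline{T_j}$ by Euler's formula gives exactly twice the bracketed trigonometric expression appearing in $\mathsf P_d$; the pairs over $1\le j\le\lfloor (d-1)/2\rfloor$ then reproduce $\mathsf P_d$, the prefactor $2^{h-1}$ per factor coming precisely from the relation $|T_j|^2=2[\cdots]$.

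Here the main obstacle is the bookkeeping of parities and signs. For $d$ odd all factors pair up and the product is $\mathsf P_d$ after raising to $h-1$. For $d$ even the middle index $j=d/2$ is self-paired, so $T_{d/2}=\tfrac d2(Q^{d/2}+Q^{-d/2})-d=d\cos(du/2)-d$ is real and contributes the extra factor $[d\cos(du/2)-d]^{h-1}$ that distinguishes the two cases. It then remains to collect the prefactors: the $d$-power $(1/d^{d-1})^{h-1}=d^{-(d-1)(h-1)}$ matches the modulus of $(-d)^{(h-1)(1-d)}$, and the accumulated signs $(-1)^{\chi(\O_S)}$, $(-1)^{d(h-1)}$ (from the coefficient) and $(-1)^{h-1}$ (from the sine factor) must be checked against the sign $(-1)^{(h-1)(1-d)}$ of $(-d)^{(h-1)(1-d)}$. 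The two agree modulo $2$ because their exponents differ by $(h-1)\bigl[(d+1)-(1-d)\bigr]=2d(h-1)$, which is even. This yields both displayed formulae.

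Finally, \eqref{diveq} follows by applying the same substitution to the descendent formula of Theorem~\ref{main} specialised to $\alpha_j=0$: there $(dt)^{|\alpha|}=1$ and each descendent factor collapses to $(d\k\cdot D_j)$, a $u$-independent constant that simply passes through the MNOP substitution, so the primary-insertion series is $\sfZ^{GW}_{d\k}(X)_{\ver}$ times $\prod_j(d\k\cdot D_j)$. The assertion that these are the complete $3$-fold generating functions for $d=1,2$ is immediate from Theorem~\ref{stric}, since $(d)$ is the only strict partition of $d$ when $d\le 2$, so the vertical (length-one) contribution exhausts the sum.
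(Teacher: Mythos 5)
Your proposal is correct and takes essentially the same route as the paper: the paper's proof also just sets $q=-e^{iu}$ (i.e.\ $Q=e^{iu}$) in Theorem \ref{main} and pairs the $j$th and $(d-j)$th factors of the product (your observation $T_{d-j}=\overline{T_j\,}$, with the self-paired middle factor $d\cos(du/2)-d$ appearing only for even $d$), deducing \eqref{diveq} from the $\alpha_j=0$ specialisation of the descendent formula. The only difference is one of detail: the paper leaves the trigonometric expansion, sign bookkeeping and the $d=1,2$ completeness claim implicit, all of which you carry out explicitly and correctly.
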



\begin{proof}
The generating function $\sfZ_{d \k}^{P}(X)_{\ver}$ of Theorem \ref{main} is invariant under $q \leftrightarrow q^{-1}$. More precisely, in the product $\prod_{j=1}^{d-1}(\cdots)$, mapping $q \leftrightarrow q^{-1}$ swaps the $j$th and $(d-j)$th terms. Setting $q = -e^{iu}$ and all $\alpha_j=0$ in Theorem \ref{main} gives the claimed formulae. Notice as a consistency check that the last formula \eqref{diveq} also follows from the divisor equation.
\end{proof}

The more general descendent-MNOP correspondence of \cite{PP1, PP2} also states that
$\sfZ_{d \k}^{P}(X,\tau_{\alpha_1}(\sigma_1) \cdots \tau_{\alpha_m}(\sigma_m))(q)$ is a rational function of $q$, and then (in this Calabi-Yau setting) that
\beq{MNOPP}
\sfZ_{d \k}^{P}\big(X,\tau_{\alpha_1}(\sigma_1) \cdots \tau_{\alpha_m}(\sigma_m)\big)(-e^{iu})\ =\ \sfZ_{d \k}^{GW}\big(X,\overline{\tau_{\alpha_1}(\sigma_1) \cdots \tau_{\alpha_m}(\sigma_m)}\,\big)(u)
\eeq
for any $\sigma_1, \ldots, \sigma_m \in H^{*}_{T}(X)$. 
Here the correspondence
\beq{bar}
\tau_{\alpha_1}(\sigma_1) \cdots \tau_{\alpha_m}(\sigma_m)\,\Mapsto\ \overline{\tau_{\alpha_1}(\sigma_1) \cdots \tau_{\alpha_m}(\sigma_m)}
\eeq
between descendents in the two theories is not the identity unless all $\alpha_j=0$. More generally it multiplies by a factor $(iu)^{-|\alpha|}$ and then adds corrections from stable maps where the evaluations of the marked points come together in $X$. These corrections are described by universal matrices\footnote{We will show that only for length-1 partitions $\mu,\nu$ do the matrices $\widetilde\sfK_{\mu\nu}$ contribute to our calculations. For these, $\widetilde\sfK_{\mu\nu}$ equals the simpler $\sfK_{\mu\nu}$ defined in \cite{PP1} by the ``capped descendent vertex".}
$$
\widetilde{\sfK}_{\mu\nu}\in \Q[i,c_1,c_2,c_3](\!(u)\!), \qquad i^2=-1,
$$
indexed by (finite, 2-dimensional) partitions $\mu, \nu$ and satisfying
\beq{24}
\widetilde{\sfK}_{\mu\nu}\ =\ 0\ \ \mathrm{unless}\ \ |\nu|+\ell(\nu)\,\le\,|\mu|+\ell(\mu)-3(\ell(\mu)-1),
\eeq
by \cite[Proposition 24]{PP1}.
(This makes the sum \eqref{PaPixgeneral} below finite.) For the $c_i$ we substitute the equivariant Chern classes of $T_X$. Assuming without loss of generality that $\alpha_1\ge\alpha_2\ge\cdots\ge\alpha_m$ and setting
\beq{shifted}
\mu:=(\alpha_1+1,\ldots,\alpha_m+1),
\eeq
the correspondence is
\begin{equation} \label{PaPixgeneral}
\overline{\tau_{\alpha_1}(\sigma_1) \cdots \tau_{\alpha_m}(\sigma_m)}\ :=\ \sum_P\pm  \prod_{S \in P} \sum_{\nu} \widetilde{\sfK}_{\mu\_S, \nu} \, \tau_\nu(\sigma_S).
\end{equation}
Here the first sum is over all \emph{set partitions} $P$ of $\{1, \ldots, m\}$ and the second sum is over all partitions $\nu$. The notation $\mu_S$ means the subpartition of $\mu$ defined by $S$, i.e. the partition whose elements are $\alpha_j+1$ for all $j$ in the subset $S$ of $\{1,\ldots,m\}$. Finally, for any permutation $\nu=(\nu_1,\ldots,\nu_\ell)$ of length $\ell=\ell(\nu)$,
\beq{tau}
\tau_\nu(\sigma_S)\ :=\ \psi_1^{\nu_1-1}\cdots\psi_\ell^{\nu_\ell-1}\cdot\ev_{1,\ldots,\ell}^*\Delta_*\bigg(\prod_{j \in S} \sigma_j\bigg),
\eeq
where $\Delta\colon X\to X^\ell$ is the small diagonal. 
For fixed $P$, the sign $\pm$ (which is always $+$ if all insertions $\sigma_i$ have even cohomological degree) in \eqref{PaPixgeneral} is dictated by the usual sign rules for differential forms: choose any ordering of the subsets $S_i\subset\{1,\ldots,m\}$, thus defining an order of the product $\prod_{S\in P}$ in \eqref{PaPixgeneral}. Each term of the product contains a further product $\prod_{j\in S_i}\sigma_j$ from \eqref{tau}. Multiplying them all together in this order gives a reordering of $\sigma_1\cdots\sigma_m$. Permuting it back to its original order (taking into account the degrees of the $\sigma_i$) produces the sign $\pm$.

The definition \eqref{tau} of $\tau_\nu(\sigma_S)$ may be rewritten in the following equivalent form. Let
$$
\sigma_S\ :=\ \prod_{j\in S}\sigma_j
$$
and write
\beq{kunn}
\Delta_* \sigma_S\ =\ \sum_j \theta_{j,1} \otimes \cdots \otimes \theta_{j,l}
\eeq
for its K\"unneth decomposition in $X^l$. Then
$$
\tau_{\nu}(\sigma_S)\ =\ \sum_j \tau_{\nu_1 - 1}(\theta_{j,1}) \cdots \tau_{\nu_\ell - 1}(\theta_{j,\ell}).
$$
The following will be useful to compute the K\"unneth decomposition \eqref{kunn}. We let $\Delta^S$ denote the small diagonal $S\to S^\ell$ and recall the projection $\pi\colon X\to S$ and zero section $\iota\colon S\into X$.

\begin{lemma} \label{Kunth} For $\sigma \in H^*(S)$,
$$
\Delta_{*\,} \pi^* \sigma\ =\ (\pi \times \cdots \times \pi)^* \, \Delta^{S}_{*} \, (\k^{\ell-1} \cdot \sigma).
$$
\end{lemma}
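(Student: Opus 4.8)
The plan is to factor the small diagonal through the fibre product of $X$ over $S$ and reduce everything to a single Euler-class computation there. First I would introduce
$$
W\ :=\ X\times_S X\times_S\cdots\times_S X\quad(\ell\ \text{factors}),
$$
with structure map $p\colon W\to S$ and the natural closed embedding $j\colon W\hookrightarrow X^\ell$. The small diagonal then factors as $\Delta=j\circ\delta$, where $\delta\colon X\to W$, $x\mapsto(x,\dots,x)$, is the \emph{relative} diagonal over $S$ and $p\circ\delta=\pi$. The point of passing to $W$ is that, unlike the naive square built from $\Delta$ and $\Delta^S$, the square with corners $W,X^\ell,S,S^\ell$ and maps $j,\,p,\,\pi\times\cdots\times\pi,\,\Delta^S$ \emph{is} Cartesian: its fibre product $S\times_{S^\ell}X^\ell$ is exactly the locus $\{\pi(x_1)=\cdots=\pi(x_\ell)\}=W$. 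Since $\pi\times\cdots\times\pi$ is flat (a vector-bundle projection) and $\Delta^S$ is a regular embedding, the compatibility of flat pullback with proper pushforward in a fibre square gives $(\pi\times\cdots\times\pi)^*\Delta^S_*=j_*\,p^*$.

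Next I would evaluate $\delta_*\pi^*\sigma$ on $W$. Because $p\delta=\pi$, the projection formula reduces this to $\delta_*\pi^*\sigma=p^*\sigma\cdot\delta_*(1)$, so the entire content is the class $\delta_*(1)=[\delta(X)]$. Here $W=\mathrm{Tot}(K_S^{\oplus\ell})$ and $\delta(X)$ is the total space of the diagonal subbundle $K_S\hookrightarrow K_S^{\oplus\ell}$, whose normal bundle in $W$ is the pullback $\pi^*\big(K_S^{\oplus\ell}/K_S\big)\cong\pi^*K_S^{\oplus(\ell-1)}$. The self-intersection formula then gives $\delta^*\delta_*(1)=e\big(\pi^*K_S^{\oplus(\ell-1)}\big)=\pi^*(\k^{\ell-1})$. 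Since $p$ is a vector bundle, $p^*\colon H^*(S)\to H^*(W)$ is an isomorphism, so $\delta_*(1)=p^*c$ for a unique $c$; applying $\delta^*$ and using injectivity of $\pi^*$ forces $c=\k^{\ell-1}$, that is $\delta_*(1)=p^*(\k^{\ell-1})$.

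Combining the two steps gives $\delta_*\pi^*\sigma=p^*(\k^{\ell-1}\sigma)$, and hence
$$
\Delta_*\pi^*\sigma\ =\ j_*\delta_*\pi^*\sigma\ =\ j_*\,p^*(\k^{\ell-1}\sigma)\ =\ (\pi\times\cdots\times\pi)^*\Delta^S_*(\k^{\ell-1}\sigma),
$$
which is the claim. The main obstacle I anticipate is conceptual rather than computational: the square assembled directly from $\Delta$, $\Delta^S$ and the projections is \emph{not} Cartesian, because the diagonal $\delta(X)$ is a proper subvariety of the fibre product $W$, so base change cannot be applied to it directly. Recognising that one must first pass to $W$, run base change there, and then absorb the discrepancy between $\delta(X)$ and $W$ into the Euler class $\k^{\ell-1}$ of the relative diagonal's normal bundle is the crux; the remaining checks (Cartesianness of the $W$-square, the normal-bundle identification, and the $p^*$-isomorphism argument) are routine.
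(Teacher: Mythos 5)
Your proof is correct, and it takes a genuinely different route from the paper's. The paper never introduces the fibre product: it uses the homotopy equivalence $(\iota\times\cdots\times\iota)\circ(\pi\times\cdots\times\pi)\simeq\id$ to reduce to computing $(\iota\times\cdots\times\iota)^*\Delta_*\pi^*\sigma$, writes $\sigma$ as the Poincar\'e dual of a cycle $A$, and then intersects the cycle $\Delta_*\pi^*A$ with $S^\ell\subset X^\ell$ in two steps --- a transverse intersection with $S\times X^{\ell-1}$, followed by an excess intersection with $S^\ell\subset S\times X^{\ell-1}$, whose normal bundle $\O_S\boxtimes K_S\boxtimes\cdots\boxtimes K_S$ supplies the factor $\k^{\ell-1}$. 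You instead factor $\Delta=j\circ\delta$ through $W=X\times_S\cdots\times_S X$, observe that the square with corners $W,X^\ell,S,S^\ell$ is Cartesian (and clean, since $W=\mathrm{Tot}(K_S^{\oplus\ell})$ is smooth of the expected dimension), apply flat base change, and produce the same factor as the Euler class of the normal bundle $\pi^*K_S^{\oplus(\ell-1)}$ of the relative diagonal via the self-intersection formula. The two arguments are dual in spirit --- the paper restricts to the zero sections, you push through the fibre product --- and both hinge on the same rank-$(\ell-1)$ bundle built from $K_S$. What your version buys: it avoids Borel--Moore homology, cycle-level Poincar\'e duality and the homotopy-retraction step, using only functorial identities (base change, projection formula, self-intersection), so it transports verbatim to Chow groups; the paper's version is shorter and more directly geometric. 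Your diagnosis of the crux is also accurate: the naive square built from $\Delta$ and $\Delta^S$ is not Cartesian, and the discrepancy between $\delta(X)$ and $W$ is exactly where $\k^{\ell-1}$ enters.
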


\begin{proof}
Since $(\iota\times\cdots\times\iota)\circ(\pi \times \cdots \times\pi)$ is homotopic to the identity, we have
$$
\Delta_{*} \pi^* \sigma\ =\ (\pi \times \cdots \times\pi)^*(\iota\times\cdots\times\iota)^*\Delta_{*} \pi^* \sigma.
$$
To compute the right hand side we write $\sigma=[A]$ as the Poincar\'e dual of homology class $A$. Then $\pi^*A$ is a Borel-Moore homology cycle of dimension 2 larger, and $(\iota\times\cdots\times\iota)^*\Delta_{*} \pi^* \sigma$ is the Poincar\'e dual of the intersection of $\Delta_*(\pi^*A)$ with $S\times\cdots\times S$.

First intersect with $S\times X^{\ell-1}$. This intersection is transverse and sends $\Delta_{*} \pi^*A$ to
$(\id\times\iota\times\cdots\times\iota)_*\Delta^S_*A$. Now intersect with $S^\ell\subset S\times X^{\ell-1}$. Since our cycle already sits inside this, the intersection simply caps with the Euler class of the normal bundle $\O_S\boxtimes K_S\boxtimes
\cdots\boxtimes K_S$ of this inclusion. Since this is $\k^{\ell-1}$ the result follows.
\end{proof}

We can now show that the descendent-MNOP correspondence applied to the stable pairs vanishing result Theorem \ref{main0} gives the analogous vanishing for Gromov-Witten invariants.

\begin{theorem} \label{GWmain0}
Suppose $S$ has a reduced irreducible canonical divisor. If the descendent-MNOP correspondence holds for $X = \mathrm{Tot}(K_S)$, then 
\beq{vanisheq}
\sfZ_{\beta}^{GW}(X,\tau_{\alpha_1}(\sigma_1) \cdots \tau_{\alpha_m}(\sigma_m)) = 0,
\eeq
unless $\beta$ is an integer multiple of the canonical class $\k$ and all $\sigma_j$ lie in $H^{\le2}(S)$.
\end{theorem}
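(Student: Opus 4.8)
The plan is to transport the stable pairs vanishing of Theorem~\ref{main0} to Gromov--Witten theory through the descendent-MNOP correspondence~\eqref{MNOPP}. View the passage $P\mapsto\overline P$ of~\eqref{PaPixgeneral} as a linear operator $\mathrm{bar}$ on the space $V$ of descendent insertions $\prod_j\tau_{\alpha_j}(\sigma_j)$ with $\sigma_j\in H^*(S)$, so that~\eqref{MNOPP} reads $\sfZ^P_\beta(X,P)(-e^{iu})=\sfZ^{GW}_\beta(X,\mathrm{bar}(P))(u)$. By the vanishing~\eqref{24} the matrix $\widetilde\sfK_{\mu\nu}$ is triangular, and its length-one diagonal entries $\widetilde\sfK_{(a),(a)}$ are invertible Laurent monomials in $u$ by~\cite{PP1}; hence $\mathrm{bar}$ is invertible and $\sfZ^{GW}_\beta(X,P)=\sfZ^P_\beta\big(X,\mathrm{bar}^{-1}(P)\big)(-e^{iu})$. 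It therefore suffices to control $\mathrm{bar}^{-1}$.

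Decompose $V=\mathrm{Good}\oplus\mathrm{Bad}$, where $\mathrm{Good}$ is spanned by the monomials all of whose insertion classes lie in $H^{\le2}(S)$ and $\mathrm{Bad}$ by those having at least one class in $H^{\ge3}(S)$. Theorem~\ref{main0} states exactly that $\sfZ^P_\beta(X,-)$ annihilates $\mathrm{Bad}$ when $\beta=d\k$, and annihilates all of $V$ when $\beta$ is not a multiple of $\k$. The crux is the containment $\mathrm{bar}(\mathrm{Bad})\subseteq\mathrm{Bad}$. To prove it, take a monomial with some $\sigma_{j_0}\in H^{\ge3}(S)$ and examine a term of~\eqref{PaPixgeneral}; let $S_0$ be the block of the set partition containing $j_0$ and $\nu$ a partition with $\widetilde\sfK_{\mu_{S_0},\nu}\neq0$. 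The factor $\tau_\nu(\sigma_{S_0})$ is expanded by~\eqref{tau}--\eqref{kunn} through the K\"unneth components of $\Delta_*\sigma_{S_0}$, where $\Delta\colon X\to X^{\ell(\nu)}$ has real codimension $6(\ell(\nu)-1)$. By Lemma~\ref{Kunth}, $\Delta_*\sigma_{S_0}=(\pi\times\cdots\times\pi)^*\Delta^S_*\big(\k^{\ell(\nu)-1}\!\cdot\!\prod_{j\in S_0}\sigma_j\big)$, so each K\"unneth summand has total degree $6(\ell(\nu)-1)+\sum_{j\in S_0}\deg\sigma_j$ spread over its $\ell(\nu)$ factors. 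Were all these factors in $H^{\le2}(S)$ the total would be at most $2\ell(\nu)$, forcing $\sum_{j\in S_0}\deg\sigma_j\le6-4\ell(\nu)$; this is impossible for $\ell(\nu)\ge2$, while for $\ell(\nu)=1$ the single class $\prod_{j\in S_0}\sigma_j$ has degree $\ge\deg\sigma_{j_0}\ge3$ whenever it is nonzero. Thus the block $S_0$ always contributes a descendent with an $H^{\ge3}$ class, every nonzero term is bad, and $\mathrm{bar}(\mathrm{Bad})\subseteq\mathrm{Bad}$.

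The containment just proved, together with the invertibility of $\mathrm{bar}$ and the finite-dimensionality of each graded piece guaranteed by~\eqref{24}, shows that $\mathrm{bar}$ restricts to an automorphism of $\mathrm{Bad}$; in particular $\mathrm{bar}^{-1}(\mathrm{Bad})=\mathrm{Bad}$. The theorem now follows. If $\beta$ is not a multiple of $\k$ then $\sfZ^P_\beta(X,-)\equiv0$, so $\sfZ^{GW}_\beta(X,P)=\sfZ^P_\beta\big(X,\mathrm{bar}^{-1}(P)\big)(-e^{iu})=0$ for every $P$, rationality and the substitution $q=-e^{iu}$ causing no trouble since the series vanishes identically. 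If $\beta=d\k$ but some $\sigma_j\in H^{\ge3}(S)$, then $P\in\mathrm{Bad}$, hence $\mathrm{bar}^{-1}(P)\in\mathrm{Bad}$ and $\sfZ^P_{d\k}\big(X,\mathrm{bar}^{-1}(P)\big)=0$ by Theorem~\ref{main0}, giving $\sfZ^{GW}_{d\k}(X,P)=0$. I expect the degree count of the second paragraph to be the main obstacle---in particular the bookkeeping of the factor $\k^{\ell(\nu)-1}$ and of the codimension $6(\ell(\nu)-1)$ furnished by Lemma~\ref{Kunth}, together with the separate treatment of the $\ell(\nu)=1$ blocks; the invertibility and block structure of $\mathrm{bar}$, although routine consequences of~\eqref{24}, should also be recorded with care.
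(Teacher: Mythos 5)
Your proposal is correct and follows essentially the same route as the paper: the correspondence \eqref{bar} is an invertible, lower-triangular (by \eqref{24}) transformation with invertible diagonal entries, it preserves the subspace of ``bad'' insertions by the degree count flowing from Lemma \ref{Kunth}, and hence the stable-pairs vanishing of Theorem \ref{main0} transports to Gromov--Witten theory. The only difference is presentational: where the paper tersely asserts that all correction terms involve the same class $\beta$ and insertions built from the $\sigma_j$ and $\k$, you spell out the K\"unneth/codimension bookkeeping (which the paper itself carries out in Lemma \ref{l=1} and Proposition \ref{PaPix} for the analogous statements), so no gap remains.
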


\begin{proof}
For a fixed 3-fold $X$ (or for fixed values of $c_1,c_2,c_3$), and fixed curve class $\beta$, the descendent-MNOP correspondence is an invertible linear transformation \eqref{bar} on the free $\Q[i](\!(u)\!)$-module of descendent operators and their products. When ordered by total shifted descendence degree,\footnote{The shifting is due to the $\pm1$ shifting in \eqref{shifted} and \eqref{PaPixgeneral}. We define the shifted descendence degree of $\tau_\nu$ \eqref{PaPixgeneral} to be the size $|\nu|$ of the partition $\nu$. Thus, in these conventions, $\tau_\alpha=\tau_{(\alpha+1)}$ has shifted degree $\alpha+1$. The \emph{total} shifted degree of a product of descendents is then the sum of the individual shifted degrees.} it is an (infinite) lower triangular matrix with invertible diagonal entries. The diagonal terms come from the leading order term of \eqref{PaPixgeneral}, which is where $P$ is the finest partition $\{1\}\cup\cdots\cup\{m\}$ and $\nu=\mu_S$ in \eqref{PaPixgeneral}. Then each $S$ is a singleton $\{j\}$, $\mu_S=(\alpha_j+1)=\nu$ and \cite{PP1}
$$
\widetilde \sfK_{\mu\_S,\nu}=(iu)^{-\alpha_j}.
$$
All other terms of the same shifted descendence degree contribute zero by \eqref{24}. (So even though shifted descendence degree only defines a partial order, the lower triangular claim makes sense.)

Moreover, all corrections \eqref{PaPixgeneral} to the leading terms involve the same curve class $\beta$ and descendent insertions of classes in $H^*(S)$ which are products of the $\sigma_j$ and $\k$ (by Lemma \ref{Kunth}). Thus if $\beta\ne d\k$ or one of the $\sigma_j\in H^{\ge3}$ the same is true in the correction terms. For such classes, Theorem \ref{main0} gives vanishing of the stable pair invariants. Since the correspondence is invertible, we deduce the same vanishing for $Z^{GW}_\beta$ as for $Z^P_\beta$.
\end{proof}


\begin{lemma} \label{l=1}
Only partitions $\nu$ of length $\ell(\nu) = 1$ contribute to \eqref{MNOPP} via \eqref{PaPixgeneral}.
\end{lemma}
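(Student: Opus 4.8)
The plan is to show that in \eqref{PaPixgeneral} every term carrying a partition $\nu$ with $\ell(\nu)\ge2$ contributes zero to the Gromov-Witten generating function, leaving only $\ell(\nu)=1$. The mechanism is a cohomological degree count: the small-diagonal pushforward in \eqref{tau}, together with the extra factor $\k^{\ell-1}$ produced by Lemma \ref{Kunth}, forces one of the inserted classes into degree $\ge3$, whereupon the vanishing Theorem \ref{GWmain0} kills the term.

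First I would fix a subset $S\subset\{1,\dots,m\}$ occurring in a set partition $P$ and a partition $\nu$ with $\ell:=\ell(\nu)\ge2$, and analyse the factor $\tau_\nu(\sigma_S)$. Since each $\sigma_j$ is pulled back from $S$ under \eqref{HtoHT}, so is $\sigma_S=\prod_{j\in S}\sigma_j=\pi^*\tilde\sigma_S$. Lemma \ref{Kunth} then gives $\Delta_*\sigma_S=(\pi\times\cdots\times\pi)^*\Delta^S_*(\k^{\ell-1}\tilde\sigma_S)$, so in the K\"unneth decomposition \eqref{kunn} every class $\theta_{j,i}=\pi^*\beta_{j,i}$ is a pullback, where $\Delta^S_*(\k^{\ell-1}\tilde\sigma_S)=\sum_j\beta_{j,1}\otimes\cdots\otimes\beta_{j,\ell}$ in $H^*(S^\ell)$.

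Next I would run the degree count. With $d_S:=\deg\tilde\sigma_S\ge0$, the class $\k^{\ell-1}\tilde\sigma_S$ has real degree $d_S+2(\ell-1)$, and since $\Delta^S\colon S\hookrightarrow S^\ell$ has real codimension $4(\ell-1)$ its Gysin pushforward has degree $d_S+6(\ell-1)$. Hence each K\"unneth term satisfies $\sum_{i=1}^\ell\deg\beta_{j,i}=d_S+6(\ell-1)$. For $\ell\ge2$ we have $6(\ell-1)>2\ell$, so the total degree exceeds $2\ell$ and at least one $\beta_{j,i}$ must lie in $H^{\ge3}(S)$. Thus every product $\prod_i\tau_{\nu_i-1}(\pi^*\beta_{j,i})$ expanding $\tau_\nu(\sigma_S)$ contains a descendent of an $H^{\ge3}(S)$ class, and so does the full product over $S\in P$. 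By Theorem \ref{GWmain0} its Gromov-Witten generating function vanishes, so the entire $\ell\ge2$ contribution drops out and only $\ell(\nu)=1$ remains.

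This is essentially a bookkeeping argument; the one point requiring care is correctly tracking the $\k^{\ell-1}$ supplied by Lemma \ref{Kunth}, which is precisely what pushes the total degree past $2\ell$. The only mild subtlety is noting that a single factor $\tau_\nu(\sigma_S)$ with $\ell(\nu)\ge2$ suffices to annihilate the whole product over $S\in P$: the vanishing of Theorem \ref{GWmain0} is triggered by the presence of even one insertion from $H^{\ge3}(S)$, irrespective of the remaining factors.
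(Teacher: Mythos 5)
Your proof is correct and follows essentially the same route as the paper's: Lemma \ref{Kunth}, a degree count on the K\"unneth factors of $\Delta^S_*(\k^{\ell-1}\sigma_S)$ in $H^{d+6(\ell-1)}(S^\ell)$, and the vanishing \eqref{vanisheq} of Theorem \ref{GWmain0} triggered by any single $H^{\ge3}(S)$ insertion. The only difference is presentational: you handle all $\ell\ge2$ uniformly via the inequality $6(\ell-1)>2\ell$, whereas the paper writes out the cases $\ell=2,3$ explicitly (for $\ell\ge4$ the pushforward already vanishes for dimension reasons).
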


\begin{proof}
For general $\sigma_1, \ldots, \sigma_m \in H^{*}(S)$, fixed $S\subset\{1,\ldots,m\}$ and a partition $\nu$ with $\ell(\nu)>1$ we will show the contribution of $\tau_{\nu} (\sigma_S)$ to \eqref{MNOPP} --- via \eqref{PaPixgeneral} --- is zero. Let $d$ be the cohomological degree of $\sigma_S \in H^d(S)$.

If $\ell(\nu)=2$ then $\Delta_{*} \, \pi^* \, \sigma=(\pi \times \pi)^*\Delta^{S}_{*} \, (\k \cdot \sigma_S)$, with
$$
\Delta^{S}_{*} \, (\k \cdot \sigma_S) \ \in\ H^{d+6}(S \times S)\ \cong\ \bigoplus_{i+j=d+6}H^i(S)\otimes H^j(S).
$$
At least one of $i$ or $j$ is $\ge3$ in all of these summands, so their contribution vanishes by \eqref{vanisheq}.

If $\ell(\nu)=3$ then $\Delta_{*} \, \pi^* \, \sigma=(\pi\times\pi \times \pi)^*\Delta^{S}_{*} \, (\k^2 \cdot \sigma_S)$, 
where
$$
\Delta^{S}_{*} \, (\k^2 \cdot \sigma) \in H^{d+12}(S \times S \times S)\ =\ 
\bigoplus_{i+j+k=d+12}H^i(S)\otimes H^j(S)\otimes H^k(S).
$$
At least one of $i,j,k$ must always be $\ge4$, so again the contribution vanishes by \eqref{vanisheq}.
\end{proof}

\begin{proposition} \label{PaPix}
For any $\sigma_1, \ldots, \sigma_m \in H^{\geq 2}(S)$, the disconnected descendent generating function $\sfZ_{d \k}^{GW}(X,\tau_{\alpha_1}(\sigma_1) \cdots \tau_{\alpha_m}(\sigma_m))$ equals
$$
\sfZ_{d \k}^{P}\Bigg( \prod_{j=1}^{m} \sum_{b=1}^{\alpha_j+1} \widetilde{\sfK}_{(\alpha_j+1),(b)}^{-1} \Big|_{c_1 = t,\,c_2=c_3=0}\ \tau_{b-1}(\sigma_j) \Bigg),
$$
where $ \widetilde{\sfK}_{(\alpha_j+1),(b)}^{-1} \Big|_{c_1 = t,\,c_2=c_3=0}$ is the inverse of the infinite lower triangular matrix $$ \widetilde{\sfK}_{(a),(b)} \Big|_{c_1 = t,\,c_2=c_3=0} \in \Q[i,t](\!(u)\!).
$$
\end{proposition}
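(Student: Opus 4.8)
The plan is to invert the descendent-MNOP correspondence \eqref{MNOPP}, after using the reductions already in place to show that, for our local surface and $\sigma_j\in H^{\ge2}(S)$, the transformation \eqref{PaPixgeneral} collapses to a product of length-one, specialised $\widetilde\sfK$-matrices. First I would dispense with the case that some $\sigma_j\in H^{\ge3}(S)$: then both sides vanish, the left by Theorem \ref{GWmain0} and the right because $\tau_{b-1}(\sigma_j)$ with $\sigma_j\in H^{\ge3}(S)$ has vanishing stable pair series by Corollary \ref{hot}. So the identity holds trivially, and I may assume $\sigma_j\in H^2(S)$ for all $j$.

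Next I would simplify the barred insertion $\overline{\tau_{\alpha_1}(\sigma_1)\cdots\tau_{\alpha_m}(\sigma_m)}$ modulo terms contributing zero to $\sfZ_{d\k}^{GW}$, via three reductions. By Lemma \ref{l=1} only length-one $\nu=(b)$ survive, so each block $S$ of a set partition $P$ contributes $\sum_b\widetilde\sfK_{\mu_S,(b)}\tau_{b-1}(\sigma_S)$ with $\sigma_S=\prod_{j\in S}\sigma_j$ (the small diagonal in \eqref{tau} being the identity when $\ell(\nu)=1$). Any block with $|S|\ge2$ then has $\sigma_S\in H^{\ge4}(S)\subset H^{\ge3}(S)$, so its $\sfZ_{d\k}^{GW}$-contribution vanishes by Theorem \ref{GWmain0}; only the finest partition $P=\{1\}\cup\cdots\cup\{m\}$ survives, with $\mu_S=(\alpha_j+1)$. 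Finally the Chern classes $c_i=c_i(T_X)$ inside $\widetilde\sfK_{(\alpha_j+1),(b)}$ specialise: the Euler sequence $0\to\pi^*K_S\otimes\t\to T_X\to\pi^*T_S\to0$ gives $c_1(T_X)=(\pi^*\k+t)+\pi^*(-\k)=t$, while one computes that $c_2(T_X)\sigma_j$ is a $t$-multiple of $\pi^*[\pt]$ and $c_3(T_X)\sigma_j=0$ (using $c_3(T_S)=0$, $H^{\ge5}(S)=0$, and $\sigma_j\in H^2(S)$); since $[\pt]\in H^4(S)\subset H^{\ge3}(S)$, every monomial involving $c_2$ or $c_3$ produces an insertion killed by Theorem \ref{GWmain0}. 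Thus the bar reduces to $\prod_{j}\sum_b\widetilde\sfK_{(\alpha_j+1),(b)}\big|_{c_1=t,\,c_2=c_3=0}\tau_{b-1}(\sigma_j)$, and \eqref{MNOPP} becomes the factorised relation
$$\sfZ_{d\k}^P\big(\textstyle\prod_j\tau_{\alpha_j}(\sigma_j)\big)(-e^{iu})=\sum_{\mathbf b}\Big(\textstyle\prod_j\sfK^{\mathrm{sp}}_{(\alpha_j+1),(b_j)}\Big)\sfZ_{d\k}^{GW}\big(\textstyle\prod_j\tau_{b_j-1}(\sigma_j)\big),$$
where $\mathbf b=(b_1,\dots,b_m)$ and $\sfK^{\mathrm{sp}}_{(a),(b)}:=\widetilde\sfK_{(a),(b)}\big|_{c_1=t,\,c_2=c_3=0}$.

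Finally I would invert this system. By \eqref{24} the matrix $\sfK^{\mathrm{sp}}_{(a),(b)}$ vanishes unless $b\le a$, and its diagonal entry is the unit $\widetilde\sfK_{(a),(a)}=(iu)^{-(a-1)}$ recorded in the proof of Theorem \ref{GWmain0}; hence $\big(\sfK^{\mathrm{sp}}_{(a),(b)}\big)$ is an infinite lower-triangular matrix over $\Q[i,t](\!(u)\!)$ with invertible diagonal, and so has a lower-triangular inverse $\widetilde\sfK^{-1}_{(a),(b)}\big|_{c_1=t,\,c_2=c_3=0}$, the $b$-sum running only up to $\alpha_j+1$. The transformation in the displayed relation is the tensor product $\bigotimes_j\sfK^{\mathrm{sp}}$ over the $m$ factors, hence invertible; applying its inverse gives precisely the claimed formula.

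The step I expect to be the main obstacle is the specialisation $c_1=t$, $c_2=c_3=0$: one must check carefully that every monomial in $c_2(T_X),c_3(T_X)$ multiplying $\sigma_j\in H^2(S)$ lands in $\pi^*H^{\ge3}(S)$ (equivalently, restricts to zero on the support $\pi^{-1}(C)$ on the stable pairs side) and is therefore annihilated by Theorem \ref{GWmain0}. A secondary care-point is the order of operations in the inversion — one must specialise the universal matrix $\widetilde\sfK$ first and only then invert, so that the object in the statement is genuinely $\big(\sfK^{\mathrm{sp}}\big)^{-1}$ rather than a specialisation of the unspecialised inverse.
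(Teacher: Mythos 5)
Your proof is correct and follows essentially the same route as the paper's: reduce to the finest set partition and to length-one $\nu$ using Theorem \ref{GWmain0} and Lemma \ref{l=1}, observe that $c_1=t$ while every occurrence of $c_2,c_3$ multiplies $\sigma_j$ into $H^{\geq 3}(S)$ (up to powers of $t$) and hence dies by Theorem \ref{GWmain0}, then invert the resulting lower-triangular system. The only differences are cosmetic — your preliminary reduction to $\sigma_j\in H^2(S)$ and the order in which the two vanishing reductions are applied — so nothing further is needed.
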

\begin{proof}
First we observe that the only set partition which contributes to \begin{equation} \label{GWbar}
\sfZ_{d\k}^{P}(X,\tau_{\alpha_1}(\sigma_1) \cdots \tau_{\alpha_m}(\sigma_m))\ =\ \sfZ_{d \k}^{GW}\big(X,\overline{\tau_{\alpha_1}(\sigma_1) \cdots \tau_{\alpha_m}(\sigma_m)}\,\big) \end{equation} is $P = \{1\} \cup \cdots \cup \{m\}$. Indeed for any other partition $P$, there is an $S \in P$ with $|S| \geq 2$ and $\sigma_S \in H^{\geq 4}$. Then for any partition $\nu$ of any length $\ell$, we have
$$
\Delta_* \sigma_S \in H^{\geq 6(\ell-1) + 4}(X^\ell).
$$
Since $6\ell-2>2\ell$ each summand of the K\"unneth decomposition of $\Delta_* \sigma_S$ must contain a class in $H^{\geq 3}$. This contributes zero to \eqref{GWbar} by Theorem \ref{GWmain0}.

Furthermore, by Lemma \ref{l=1}, only partitions $\nu$ of length $\ell(\nu)=1$ contribute via \eqref{PaPixgeneral} to \eqref{MNOPP}. Therefore (\ref{MNOPP}, \ref{PaPixgeneral}) simplify to
$$
\sfZ_{d\k}^{P}(X,\tau_{\alpha_1}(\sigma_1) \cdots \tau_{\alpha_m}(\sigma_m))\ =\  \sfZ_{d \k}^{GW}\Bigg(\prod_{j=1}^{m} \sum_{b=1}^{\alpha_j+1}  \tau_{b-1}\big(\widetilde{\sfK}_{(\alpha_j+1),(b)} \, \sigma_j\big) \Bigg),
$$
where the sign $\pm$ in \eqref{PaPixgeneral} is $+$ for the set partition $P = \{1\} \cup \cdots \cup \{m\}$.

The correspondence requires us to set $c_i$ to the $T$-equivariant $i$th Chern class of $X$. Using $T_X|_S = T_S\ \oplus\ K_S \otimes \mathfrak{t}$, we see that $c_1 = t$, $c_2 = c_2(S) - \k^2 - \k \, t$, and $c_3 = c_2(S) t$. Any occurrence of $c_1, c_2, c_3$ is multiplied by a class $\sigma_j \in H^{2}$ in \eqref{PaPixgeneral}. Therefore the terms $c_2(S) - \k^2 - \k \, t$ and $c_2(S) t$ contribute zero by Theorem \ref{GWmain0}. We get
$$
\sfZ_{d\k}^{P}(X,\tau_{\alpha_1}(\sigma_1) \cdots \tau_{\alpha_m}(\sigma_m))\ =\  \sfZ_{d \k}^{GW}\Bigg(\!\!\pm \prod_{j=1}^{m} \sum_{b=1}^{\alpha_j+1}  \widetilde{\sfK}_{(\alpha_j+1),(b)} \Big|_{c_1=t,\,c_2=c_3=0}\,\tau_{b-1}(\sigma_j) \!\Bigg).
$$
We suppress the specialisation $c_1=t,\,c_2=c_3=0$ from now on for notational brevity. Multiplying out,
\begin{align*}
\sfZ_{d\k}^{P}(X,\tau_{\alpha_1}(\sigma_1) \cdots \tau_{\alpha_m}(\sigma_m)) &=& \nonumber \\
\pm\sum_{b_1, \ldots, b_m} &&\hspace{-9mm} \prod_{j=1}^{m} \widetilde{\sfK}_{(\alpha_j+1),(b_j)}\, \sfZ_{d\k}^{GW}(X,\tau_{b_1-1}(\sigma_1) \cdots \tau_{b_m-1}(\sigma_m)),\label{multout}
\end{align*}
for any $\alpha_1, \ldots, \alpha_m$. Inverting gives
\begin{eqnarray*} 
\sfZ_{d\k}^{GW}(X,\tau_{\alpha_1}(\sigma_1) \cdots \tau_{\alpha_m}(\sigma_m)) &=& \\
\pm\sum_{b_1, \ldots, b_m} &&\hspace{-9mm} \prod_{j=1}^{m} \widetilde{\sfK}^{-1}_{(\alpha_j+1),(b_j)}\, \sfZ_{d\k}^{P}(X,\tau_{b_1-1}(\sigma_1) \cdots \tau_{b_m-1}(\sigma_m)).
\end{eqnarray*}
Expanding out the result we are required to prove gives precisely this.
\end{proof}

\begin{theorem} \label{GWmain}
Suppose $S$ has a smooth irreducible canonical divisor of genus $h=\k^2+1$ and the descendent-MNOP correspondence holds for $X = \mathrm{Tot}(K_S)$. Then $\sfZ_{d\k}^{GW}(X,\tau_{\alpha_1}(D_1) \cdots \tau_{\alpha_m}(D_m))_{\ver}$ equals the product of
$$
\sfZ_{d\k}^{GW}(X)_{\ver}\prod_{j=1}^m(d\k\cdot D_j)
$$ 
and
$$
\prod_{j=1}^{m} \sum_{b=1}^{\alpha_j+1} \widetilde\sfK_{(\alpha_j+1),(b)}^{-1} \Big|_{c_1 = t,\,c_2=c_3=0} \cdot  \frac{t^{b-1}}{b!} \bigg( \frac{- i d}{2} \bigg)^{\!b-1\,} \frac{\sin(b \, du/2 )}{\sin^{b}(du/2)}\,. 
$$
For $d=1,2$ these are the complete 3-fold generating functions.
\end{theorem}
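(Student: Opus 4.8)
The plan is to reduce the Gromov--Witten vertical generating function to the stable pairs vertical generating function via the descendent-MNOP correspondence, and then to feed in the closed formula of Theorem \ref{main}. Since the correspondence \eqref{MNOPP}--\eqref{PaPixgeneral} is linear in the insertions and is applied to the stable pairs side class-by-class, it may be applied to the vertical piece $\sfZ_{d\k}^P(\cdots)_{\ver}$ alone; indeed this is precisely the definition of $\sfZ_{d\k}^{GW}(\cdots)_{\ver}$. Thus the reduction of Proposition \ref{PaPix} holds verbatim with both sides replaced by their vertical contributions:
$$
\sfZ_{d\k}^{GW}(X,\tau_{\alpha_1}(D_1) \cdots \tau_{\alpha_m}(D_m))_{\ver}\ =\ \sfZ_{d\k}^P\!\Bigg(\prod_{j=1}^m\sum_{b=1}^{\alpha_j+1}\widetilde\sfK_{(\alpha_j+1),(b)}^{-1}\Big|_{c_1=t,\,c_2=c_3=0}\,\tau_{b-1}(D_j)\Bigg)_{\ver}.
$$

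Next I would expand the right-hand side multilinearly as a sum over $(b_1,\ldots,b_m)$ and apply the descendent half of Theorem \ref{main} with each $\alpha_j$ replaced by $b_j-1$ (so $\alpha_j+1$ becomes $b_j$). The crucial structural feature is that the descendent factor in Theorem \ref{main} is already a product over $j$, so the whole expression factorises as $\sfZ_{d\k}^P(X)_{\ver}\prod_j(d\k\cdot D_j)$ times
$$
\prod_{j=1}^m\sum_{b=1}^{\alpha_j+1}\widetilde\sfK_{(\alpha_j+1),(b)}^{-1}\Big|_{c_1=t,\,c_2=c_3=0}\,(dt)^{b-1}\frac1{b!}\,\frac{Q^{\frac{db}2}-Q^{-\frac{db}2}}{\big(Q^{\frac d2}-Q^{-\frac d2}\big)^{b}},
$$
the single factor $(d\k\cdot D_j)$ from each insertion being pulled out in front.

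Finally I would pass to the Gromov--Witten variable by the MNOP substitution $q=-e^{iu}$, i.e.\ $Q=e^{iu}$. By Proposition \ref{GWmain'} this sends $\sfZ_{d\k}^P(X)_{\ver}$ to $\sfZ_{d\k}^{GW}(X)_{\ver}$ and leaves the prefactor $\prod_j(d\k\cdot D_j)$ untouched. Under the same substitution $Q^{\frac{db}2}-Q^{-\frac{db}2}=2i\sin(bdu/2)$ and $\big(Q^{\frac d2}-Q^{-\frac d2}\big)^{b}=(2i)^b\sin^b(du/2)$; combining these with $(dt)^{b-1}/(2i)^{b-1}=t^{b-1}(-id/2)^{b-1}$ (using $1/i=-i$) turns the $b$-th summand into $\frac{t^{b-1}}{b!}\big(\frac{-id}{2}\big)^{b-1}\sin(bdu/2)/\sin^b(du/2)$, which is exactly the claimed factor. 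The assertion for $d=1,2$ then follows because there the vertical contribution of Theorem \ref{main} is already the full generating function.

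The step requiring most care is the first: justifying that the vertical decomposition commutes with the descendent-MNOP correspondence. This is not a geometric splitting of the Gromov--Witten moduli space but a definitional one, $\sfZ^{GW}_{\ver}$ being defined as the image of $\sfZ^P_{\ver}$ under the linear, invertible correspondence. The content is therefore that Proposition \ref{PaPix}'s reduction to length-one partitions $\nu$ (Lemma \ref{l=1}) and to the finest set partition $P$ rests only on the $H^{\ge2}$ degrees of the insertions and on the vanishing Theorem \ref{GWmain0}, neither of which sees which geometric component of $P_X^T$ one restricts to. Once this is granted, the remainder is the routine multilinear expansion and trigonometric simplification sketched above.
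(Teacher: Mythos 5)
Your proposal is correct and takes essentially the same route as the paper: combine Proposition \ref{PaPix} with the descendent part of Theorem \ref{main} under the MNOP substitution $q=-e^{iu}$ (so $Q=e^{iu}$), the no-descendent part being Proposition \ref{GWmain'}, and the trigonometric simplification you perform matches the paper's. Your explicit justification that the vertical decomposition commutes with the (linear, definitional) descendent-MNOP correspondence — because the reductions in Proposition \ref{PaPix} rest only on vanishing statements that hold component-by-component on $P_X^T$ — is a point the paper leaves implicit, so this is a faithful, slightly more detailed version of the same argument.
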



\begin{proof}
Without descendents this is Proposition \ref{GWmain'}. The descendent term of Theorem \ref{main} is invariant under $q \leftrightarrow q^{-1}$ up to a sign $(-1)^{|\alpha|}$. Setting $-q = e^{iu}$ this term becomes
$$
t^{|\alpha|} \prod_{j=1}^{m} (d \k \cdot \! D_j) \left(\frac{-i}2\right)^{\!\alpha_j} \frac{d^{\alpha_j}}{(\alpha_j + 1)!} \frac{\sin((\alpha_j + 1) \, d u /2)}{\sin^{\alpha_j+1}(du/2)}\,.
$$
Combining with Proposition \ref{PaPix} and setting the sign $\pm$ to $+$ (because all $\sigma_j$ have even degree) gives the desired result.
\end{proof}

\section{Links to Gromov-Witten theory of $S$} \label{GWS}

The (disconnected) Gromov-Witten invariants of $S$,
\beq{GWSdef}
N\udot_{g,\beta}(S,\tau_{\alpha_1}(\sigma_1) \cdots \tau_{\alpha_m}(\sigma_m))\ :=\
\int_{[\overline{M}_{\!g,m}^{\,\bullet}(S,\beta)]^{\vir}} \prod_{j=1}^{m} \tau_{\alpha_j}(\sigma_j),
\eeq
can be recovered from those of $X$ \eqref{GWXdef} by taking the leading order term in their generating series.

\begin{lemma} \label{leading}
Define $g$ is so that the virtual dimension $g-1+\int_{\beta} c_1(S)+m$ of $\overline{M}_{\!g,m}^{\,\bullet}(S,\beta)$ equals the complex degree\footnote{The complex degree is half the cohomological degree.} of the descendent insertions:
\begin{equation} \label{dimvirg}
g - 1 + \int_{\beta} c_1(S) + m = \sum_{j=1}^{m} \Big(\alpha_j + \frac12\deg(\sigma_j)\Big). \vspace{-2mm}
\end{equation}
Then
$$
\sfZ_{\beta}^{GW}\!(X,\tau_{\alpha_1}(\sigma_1) \cdots \tau_{\alpha_m}(\sigma_m))\ =\ t^rN\udot_{g,\beta}(S,\tau_{\alpha_1}(\sigma_1) \cdots \tau_{\alpha_m}(\sigma_m))u^{2g-2} + O(u^{2g}),
$$
where $r=-\rk(N^{\vir})=g-1+\int_\beta c_1(S)=\sum_j\left(\alpha_j+\frac12\deg\sigma_j-1\right)$.
\end{lemma}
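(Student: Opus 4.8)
The plan is to apply $T$-equivariant virtual localisation to each term of \eqref{GWXdef} and read off the lowest-order coefficient in $u$. The input is the identification (recalled above, following \cite[Proposition 3.2]{KT1}) of $\overline{M}_{g',m}^{\bullet}(X,\iota_*\beta)^T$ with $\overline{M}_{g',m}^{\bullet}(S,\beta)$, under which the fixed part of the obstruction theory is the standard one on $\overline{M}_{g',m}^{\bullet}(S,\beta)$ and the virtual normal bundle is the weight-one fibre direction $N_{S/X}=K_S\otimes\t$. Concretely $[N^{\vir}]=[\mathsf N]\otimes\t$, where $\mathsf N:=R\pi_*f^*K_S$ for the universal stable map $f\colon\cC\to S$ and projection $\pi\colon\cC\to\overline{M}_{g',m}^{\bullet}(S,\beta)$ (here $g'$ is the running genus, reserving $g$ for the value fixed by \eqref{dimvirg}). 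First I would record, by Riemann--Roch, that $\rk\mathsf N=\chi(f^*K_S)=1-g'-\int_\beta c_1(S)$, so that $-\rk N^{\vir}=g'-1+\int_\beta c_1(S)$.

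Next I would expand $1/e(N^{\vir})$ in powers of $t$. Since $N^{\vir}$ carries the single weight $\t$, formula \eqref{param} with $w=1$ gives $e(N^{\vir})=t^{\,\rk\mathsf N}c_{1/t}(\mathsf N)$, hence
\[
\frac{1}{e(N^{\vir})}\ =\ t^{-\rk\mathsf N}\,\frac{1}{c_{1/t}(\mathsf N)}\ =\ \sum_{i\ge0}Q_i\,t^{-\rk\mathsf N-i},
\]
where $Q_i$ has complex degree $i$ and $Q_0=1$. Writing $D:=\sum_j(\alpha_j+\tfrac12\deg\sigma_j)$ for the complex degree of $\prod_j\tau_{\alpha_j}(\sigma_j)$ and $v_{g'}:=g'-1+\int_\beta c_1(S)+m$ for the virtual dimension of $\overline{M}_{g',m}^{\bullet}(S,\beta)$, integrating $Q_i\prod_j\tau_{\alpha_j}(\sigma_j)$ against the virtual class selects the summand with $i+D=v_{g'}$, i.e.\ $i=v_{g'}-D=g'-g$, using \eqref{dimvirg} to write $D=g-1+\int_\beta c_1(S)+m$. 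For $g'<g$ there is no such $i\ge0$, so $N\udot_{g',\beta}(X,\ldots)=0$. For the selected $i$ the power of $t$ is $-\rk\mathsf N-i=(g'-1+\int_\beta c_1(S))-(g'-g)=g-1+\int_\beta c_1(S)=r$ independently of $g'$, so at $g'=g$ the term $Q_0=1$ gives $N\udot_{g,\beta}(X,\ldots)=t^{\,r}N\udot_{g,\beta}(S,\ldots)$ by \eqref{GWSdef}.

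Finally, assembling $\sfZ_{\beta}^{GW}(X,\ldots)=\sum_{g'}N\udot_{g',\beta}(X,\ldots)u^{2g'-2}$, the vanishing for $g'<g$ shows the series starts in order $u^{2g-2}$, with leading coefficient $t^{\,r}N\udot_{g,\beta}(S,\ldots)$ and remainder $O(u^{2g})$, as claimed. The main obstacle is the bookkeeping of the second paragraph: one must be sure that $N^{\vir}$ really is of pure weight one (so that \eqref{param} applies verbatim and $Q_0=1$ with no stray sign), and that the selected cohomological degree matches the virtual dimension exactly, so that the power of $t$ comes out uniformly as $r$ and no genus below $g$ survives. A minor point to dispatch is that when the genus determined by \eqref{dimvirg} is not an integer, the same degree count forces $i=g'-g\notin\Z_{\ge0}$ for every integer $g'$, so every $N\udot_{g',\beta}(X,\ldots)$ vanishes, consistent with the convention that the surface invariant is then zero.
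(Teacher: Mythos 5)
Your proposal is correct and follows essentially the same route as the paper: it invokes \cite[Proposition 3.2]{KT1} to identify the $T$-fixed locus with $\overline{M}_{\!g,m}^{\,\bullet}(S,\beta)$ and the virtual normal bundle with $R\pi_*f^*K_S\otimes\t$, expands $1/e(N^{\vir})$ in powers of $t$ via \eqref{param}, and then matches cohomological degree against virtual dimension to kill all genera below $g$ and isolate the leading coefficient $t^rN\udot_{g,\beta}(S,\ldots)$. Your write-up simply makes explicit the Riemann--Roch rank count and the degree bookkeeping that the paper compresses into ``vanishes for cohomological degree reasons.''
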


Of course this coefficient of $u^{2g-2}$ could be zero, in particular if $g$ defined by \eqref{dimvirg} is not an integer.

\begin{proof}
By \cite[Proposition 3.2]{KT1} the virtual normal bundle of $\overline{M}_{\!g,m}^{\,\bullet}(S,\beta)=\overline{M}_{\!g,m}^{\,\bullet}(X,\beta)^T\subset\overline{M}_{\!g,m}^{\,\bullet}(X,\beta)$ is
$$
N^\vir = R \pi_* f^* K_S \otimes \t,
$$
where
\begin{displaymath}
\xymatrix@R=18pt@C=5pt
{
\cC \ar^{f}[r] \ar^{\pi}[d] & S \\
\overline{M}_{\!g,m}^{\,\bullet}(S,\beta) &
}
\end{displaymath}
is the the universal curve. As in \cite[Section 3.1]{KT1}, by \eqref{param} this implies
$$
\frac{1}{e(N^{\vir})}\ =\ t^r+a_1t^{r-1}+a_2t^{r-2}+\cdots,
$$
with $a_i\in H^{2i}(\overline{M}_{\!g,m}^{\,\bullet}(S,\beta))$. Substituting into \eqref{GWXdef} gives
$$
N\udot_{g,\beta}(X,\tau_{\alpha_1}(\sigma_1) \cdots \tau_{\alpha_m}(\sigma_m))\ =\ 
t^rN\udot_{g,\beta}(S,\tau_{\alpha_1}(\sigma_1) \cdots \tau_{\alpha_m}(\sigma_m))
$$
for $g$ defined by \eqref{dimvirg}, while for smaller $g$ the left hand side vanishes for cohomological degree reasons.
\end{proof}
As a consequence, by Theorem \ref{GWmain0} we deduce the well known vanishing:
\begin{corollary} \label{GWSvanish}
Suppose $S$ has a smooth connected canonical divisor and let $g$ be defined by \eqref{dimvirg}. If the descendent-MNOP correspondence holds for $X = \mathrm{Tot}(K_S)$, then 
$$
N\udot_{g,\beta}(S,\tau_{\alpha_1}(\sigma_1)\cdots\tau_{\alpha_m}(\sigma_m))\ =\ 0,
$$
unless $\beta=d\k$ and all $\sigma_j$ lie in $H^{\le2}(S)$.$\hfill\square$
\end{corollary}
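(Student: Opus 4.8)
The plan is to read this off directly from the two preceding results: the three-fold vanishing Theorem \ref{GWmain0} and the leading-order identification of Lemma \ref{leading}. No new geometry is needed; the corollary is a purely formal consequence.

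First I would dispose of the logical bookkeeping. The claim is that the surface invariant vanishes \emph{unless} $\beta=d\k$ and all $\sigma_j\in H^{\le2}(S)$, so I assume the negation: either $\beta$ is not an integer multiple of $\k$, or some $\sigma_j$ lies in $H^{\ge3}(S)$. This is exactly the hypothesis of Theorem \ref{GWmain0}. Invoking that theorem (this is the only place the assumed descendent-MNOP correspondence is used, ultimately via Theorem \ref{main0}) gives that the full generating function
$$
\sfZ_{\beta}^{GW}(X,\tau_{\alpha_1}(\sigma_1) \cdots \tau_{\alpha_m}(\sigma_m))\ \in\ \Q[t,t^{-1}](\!(u)\!)
$$
vanishes identically; in particular every one of its coefficients, viewed as a Laurent series in $u$, is zero.

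Next I would extract the single coefficient that detects the surface invariant. By Lemma \ref{leading}, with $g$ defined by \eqref{dimvirg} and $r=g-1+\int_\beta c_1(S)$, the coefficient of $u^{2g-2}$ in the series above equals $t^r\,N\udot_{g,\beta}(S,\tau_{\alpha_1}(\sigma_1)\cdots\tau_{\alpha_m}(\sigma_m))$. Since the whole series is zero this coefficient is zero, so $t^r\,N\udot_{g,\beta}(S,\ldots)=0$ in $\Q[t,t^{-1}]$. As $t^r$ is a nonzero monomial and the invariant \eqref{GWSdef} is a scalar in $\Q$, we may cancel $t^r$ to conclude $N\udot_{g,\beta}(S,\ldots)=0$, as required.

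There is no genuine obstacle here: the entire content lies in Theorem \ref{GWmain0} and Lemma \ref{leading}, both already established. The only point worth recording is the harmless boundary case in which $g$ from \eqref{dimvirg} fails to be an integer, in which the invariant vanishes by convention and there is nothing to prove.
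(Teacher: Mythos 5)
Your proposal is correct and is essentially identical to the paper's own (implicit) proof: the paper deduces the corollary directly from Theorem \ref{GWmain0} combined with Lemma \ref{leading}, exactly as you do, with the non-integer $g$ case handled by the same convention. Nothing is missing.
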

This was originally proved by Lee-Parker \cite{LP} using analytical techniques rather than the MNOP conjecture. See  \cite{MP} and \cite{KL1, KL2} for algebro-geometric proofs.  \medskip

So we consider $N\udot_{g,d\k}(S,\tau_{\alpha_1}(\sigma_1)\cdots\tau_{\alpha_m}(\sigma_m))$.
We first consider the case of no insertions. Thus $g=1+d\k^2$ \eqref{dimvirg} is the genus of degree $d$ \'etale covers $u\colon\Sigma\to C$ of genus $h=1+\k^2$. These covers are discrete, and Lee-Parker \cite{LP} show each contributes $(-1)^{h^0(u^* K_S|_C)}/|\Aut(u)|$ to the Gromov-Witten theory of $S$:
\begin{align} \label{Hurwitz}
N_{g,d\k}(S)\ =\ \sum_{u} \frac{(-1)^{h^0(u^* K_S|_C)}}{|\Aut(u)|}\,.
\end{align}
This was proved within algebraic geometry by Kiem-Li \cite{KL1, KL2}. The right hand side is the degree $d$ ``unramified spin Hurwitz number" of $(C,K_S|_C)$ --- the count of \'etale covers of $C$, signed by the parity of the theta characteristic\footnote{By the adjunction formula, $K_S|_C$ is a square root of $K_C$.} $K_S|_C$ of $C$.

\begin{corollary} \label{Guncor}
Suppose the smooth connected curve $C$ of genus $h$ is the canonical divisor of a smooth projective surface $S$, and that the MNOP conjecture holds for $X = \mathrm{Tot}(K_S)$. Then the vertical contribution to the unramified spin Hurwitz number \eqref{Hurwitz} is 
\beq{ttt}
(-1)^{\chi(\O_S)} \Bigg( \frac{2^{\frac{d-1}{2}}}{d!} \Bigg)^{\!2-2h}.
\eeq
For $d=1,2$ this is the entire unramified spin Hurwitz number \eqref{Hurwitz}. 
\end{corollary}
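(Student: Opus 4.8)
The plan is to realise the vertical contribution to the spin Hurwitz number \eqref{Hurwitz} as a single coefficient of $u$ in the Gromov--Witten series computed in Proposition \ref{GWmain'}. First I would apply Lemma \ref{leading} with no insertions ($m=0$): then $r=0$ and the genus \eqref{dimvirg} is $g=1+d\k^2$, so $N\udot_{g,d\k}(S)$ is the coefficient of $u^{2g-2}=u^{2d(h-1)}$ in $\sfZ_{d\k}^{GW}(X)$. Since the vertical piece is a summand of the full series, the vertical contribution to \eqref{Hurwitz} is exactly the coefficient of $u^{2d(h-1)}$ in $\sfZ_{d\k}^{GW}(X)_{\ver}$, for which Proposition \ref{GWmain'} supplies an explicit closed trigonometric formula.

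The computation then reduces to a Taylor expansion around $u=0$ of each factor of that formula. The sine factor contributes $[2\sin(du/2)]^{2h-2}\sim(du)^{2h-2}$, and in the even case the extra factor contributes $[d\cos(du/2)-d]^{h-1}\sim(-d^3u^2/8)^{h-1}$. The crucial step is the analysis of each factor $f_j(u)$ of $\mathsf P_d$: I would show that both its $u^0$ and its $u^2$ coefficients vanish identically --- the constant term cancels directly, and the $u^2$ coefficient factors as $\tfrac12\,jd(d-j)\big[-d+j+(d-j)\big]=0$ --- so that $f_j$ vanishes to order $u^4$. Its leading coefficient is then read off from $\cos x=1-x^2/2+x^4/24-\cdots$ together with the identity $d^3-j^3-(d-j)^3=3dj(d-j)$, giving $f_j(u)\sim\tfrac18\,d^2j^2(d-j)^2\,u^4$. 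Counting orders, the total vanishing order of $\sfZ_{d\k}^{GW}(X)_{\ver}$ works out to $2d(h-1)$ in both parities, which is a useful internal consistency check against $2g-2$.

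Finally I would multiply the leading coefficients and simplify. Writing $n:=h-1=\k^2$, the delicate bookkeeping is to consolidate the product $\prod_{j=1}^{M}j^{2n}(d-j)^{2n}$ over $M=\lfloor(d-1)/2\rfloor$ terms, together with the prefactor $(-d)^{n(1-d)}$ and the sine (and, for $d$ even, cosine) contributions. For $d$ odd the index sets $\{1,\dots,(d-1)/2\}$ and $\{(d+1)/2,\dots,d-1\}$ partition $\{1,\dots,d-1\}$, so the factorial product collapses to $\big((d-1)!\big)^{2n}$; the even case is entirely analogous once the cosine factor is included and produces the identical closed form. Collecting the powers of $d$ (which reduce to $d^{2n}$), the powers of $2$ (which reduce to $2^{-n(d-1)}$), and checking that the accumulated signs collapse to $(-1)^{\chi(\O_S)}$ (for $d$ odd because $1-d$ is even, for $d$ even because the prefactor and cosine signs combine to $(-1)^{nd}=+1$), the answer consolidates as $(-1)^{\chi(\O_S)}(d!)^{2n}\,2^{-n(d-1)}=(-1)^{\chi(\O_S)}\big(2^{(d-1)/2}/d!\big)^{2-2h}$, as claimed. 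The statement for $d=1,2$ is then immediate, since Proposition \ref{GWmain'} asserts that the vertical contribution is the complete $3$-fold generating function in those cases, so it computes the entire spin Hurwitz number \eqref{Hurwitz}.

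The main obstacle is the double vanishing (to orders $u^0$ and $u^2$) of each factor $f_j$ of $\mathsf P_d$, without which the powers of $u$ would fail to match $2g-2$; the subsequent factorial consolidation is routine but must be organised carefully according to the parity of $d$.
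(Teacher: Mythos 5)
Your proposal is correct and follows essentially the same route as the paper: apply Lemma \ref{leading} with $m=0$ to identify the vertical contribution to \eqref{Hurwitz} with the coefficient of $u^{2g-2}$ in $\sfZ_{d\k}^{GW}(X)_{\ver}$ from Proposition \ref{GWmain'}, observe that each factor of $\mathsf P_d$ vanishes to order $u^4$ (the paper phrases this as ``all terms of order $\le3$ cancel''), and then extract leading Taylor coefficients. The paper compresses the remaining bookkeeping into a single sentence; your detailed version of it --- the identity $d^3-j^3-(d-j)^3=3dj(d-j)$, the collapse $\prod_j j(d-j)=(d-1)!$ (adjusted by the cosine factor when $d$ is even), and the parity-dependent sign check --- is exactly what that sentence elides, and you carry it out correctly.
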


\begin{proof}
By Lemma \ref{leading} we must extract the coefficient of the leading term $u^{2g-2}$ of the 3-fold generating function of Proposition \ref{GWmain'}. In the product $\mathsf P_d$ all terms of order $\le3$ cancel, so we expand to order 4 via 
$$
\cos(x) = 1 - \frac{x^2}{2} + \frac{x^4}{24} + O(x^5).
$$
Expanding the remaining $\cos$ and $\sin$ terms to order $0$ and $1$ respectively easily gives the leading order term \eqref{ttt}.
\end{proof}

Using a TQFT formalism the spin Hurwitz numbers \eqref{Hurwitz} have been calculated by Gunningham \cite{Gun} as a sum over all strict partitions $\mu$ of $d$:
$$
\sum_{\mu\, \vdash d\textrm{\ strict}} (-1)^{\chi(\O_S) \, \ell(\mu)} \, ( d_\mu )^{2-2h}.
$$
Here $d_\mu$ is an explicit combinatorial number associated to $\mu$ and related to representations of the Sergeev algebra. The vertical contribution of Corollary \ref{Guncor} correctly reproduces the term corresponding to $\mu = (d)$.

In stable pairs theory partitions describe thickenings of the canonical divisor $C$, while in TQFT they parameterise irreducible representations of the symmetric group (the symmetry group of one fibre of an \'etale cover). Amazingly the MNOP correspondence seems to match these up. The calculations in the sequel \cite{KT4} provide further relations to Gunningham's formula. \medskip

Finally we consider \eqref{GWSdef} with divisorial descendents. For $d=1,2$, Maulik-Pandharipande \cite{MP} conjectured the following formulae
\begin{eqnarray*} \label{MPconj}
N\udot_{g,\k}(S,\tau_{\alpha_1}(D_1) \cdots \tau_{\alpha_m}(D_m))
&=& (-1)^{\chi(\O_S)} \prod_{j=1}^{m} (\k \cdot\! D_j) \frac{\alpha_j !}{(2\alpha_j +1)!} (-2)^{-\alpha_j}, \\
N\udot_{g,2\k}(S,\tau_{\alpha_1}(D_1) \cdots \tau_{\alpha_m}(D_m))
&=& (-1)^{\chi(\O_S)} 2^{h-1} \prod_{j=1}^{m} (2\k \cdot\! D_j) \frac{\alpha_j !}{(2\alpha_j +1)!} (-2)^{\alpha_j}.
\end{eqnarray*}
These formulae were proved by Kiem-Li \cite{KL1, KL2} using cosection localisation on the moduli space of stable maps, and later by Lee via symplectic geometry \cite{Lee}. We will show how their compatibility with our calculations shapes the form of the descendent-MNOP correspondence.

The leading term of the generating function $\sfZ_{d\k}^{GW}(X ,\tau_{\alpha_1}(D_1) \cdots \tau_{\alpha_{m}}(D_m))$ has order $u^{2g-2}$, where by \eqref{dimvirg},
$$
2g-2\ =\ d(2h-2) + 2|\alpha|.
$$
Similarly the leading order term of $Z^{GW}_{d\k}(X)$ has order $u^{d(2h-2)}$. Therefore
Theorem \ref{GWmain} implies the leading order term of
\begin{align} \label{degrees}
\prod_{j=1}^{m} \sum_{b=1}^{\alpha_j+1} \widetilde\sfK_{(\alpha_j+1),(b)}^{-1} \Big|_{c_1 = t,\,c_2=c_3=0} \cdot  \frac{t^{b-1}}{b!} \bigg( \frac{- i d}{2} \bigg)^{\!b-1\,} \frac{\sin(b \, du/2 )}{\sin^{b}(du/2)}\ =\ O(u^{2 |\alpha|})\!\!
\end{align}
is $u^{2g-2}u^{-d(2h-2)}=u^{2 |\alpha|}$.

We can substitute in the fact \cite[Theorems 2,\,3 and Section 7]{PP1} that the matrix $\widetilde\sfK_{(a),(b)}|_{c_1=t,\,c_2=c_3=0}$ vanishes unless $b\le a$, in which case
$$
\widetilde\sfK_{(a),(b)} \Big|_{c_1=t,\,c_2=c_3=0}\ =\ t^{a-b} f_{ab}(u),
$$
for some $f_{ab}(u) \in \Q[i](\!(u)\!)$ with $f_{aa} = (iu)^{1-a}$ . But since the $f_{ab}(u)$ could have many terms, \eqref{degrees} does not determine them.
 
\begin{conjecture} \label{conj1}
For each $a \geq b \geq 1$, we have $$\widetilde\sfK_{(a),(b)} \Big|_{c_1=t,\,c_2=c_3=0} \ =\ t^{a-b} K_{ab} \, u^{1-a},$$ for some \emph{constant} $K_{ab} \in \Q[i]$.
\end{conjecture}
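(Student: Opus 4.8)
The plan is to derive Conjecture \ref{conj1} from a scaling (homogeneity) property of the universal matrix $\widetilde\sfK_{\mu\nu}$, rather than trying to compute $f_{ab}(u)$ term by term. First I would identify the specialisation geometrically. In \cite{PP1} the variables $c_1,c_2,c_3$ are the elementary symmetric functions $e_1,e_2,e_3$ of the three torus weights $s_1,s_2,s_3$ of $\C^3$; the locus $c_1=t,\ c_2=c_3=0$ is cut out by $e_2(s)=e_3(s)=0,\ e_1(s)=t$, and since $e_3=0$ forces one weight to vanish and then $e_2=0$ a second, it is exactly the maximally degenerate ray $(s_1,s_2,s_3)=(t,0,0)$. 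So the conjecture concerns the length-$1$ capped descendent vertex restricted to this ray.

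The engine of the proof would be the following. Suppose we know two separate homogeneities of $\widetilde\sfK_{(a),(b)}(s_1,s_2,s_3;u)$ as an element of $\Q[i,s_1,s_2,s_3](\!(u)\!)$: the first under $\deg s_i=1,\ \deg u=0$, and the second under $\deg s_i=1,\ \deg u=w$ for some $w\ne0$. Writing a general term as $s_1^{p_1}s_2^{p_2}s_3^{p_3}u^{e}$, the two gradings read $p_1+p_2+p_3=\mathrm{const}$ and $p_1+p_2+p_3+we=\mathrm{const}$; subtracting fixes $e$, and restricting to the ray $(t,0,0)$ kills every term with $p_2$ or $p_3$ positive. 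Hence only a single monomial $t^{p_1}u^{e}$ survives, i.e. $\widetilde\sfK_{(a),(b)}|_{c_1=t,c_2=c_3=0}$ is one power of $t$ times one power of $u$, which is precisely the claim $f_{ab}(u)=K_{ab}\,u^{e}$. The first homogeneity is already available: the statement of \cite{PP1} quoted above, that the restriction equals $t^{a-b}f_{ab}(u)$ with a \emph{single} power $t^{a-b}$, is exactly the assertion that every $u$-coefficient of the vertex is homogeneous in the $c_i$ of the common degree $a-b$. Granting the second homogeneity, the surviving $u$-exponent $e$ is forced to be $b$-independent and equal to $1-a$ by matching the known diagonal value $f_{aa}=(iu)^{1-a}$; and, as the paper explains immediately after Conjecture \ref{conj1}, once monomiality is in hand the constants $K_{ab}$ are pinned down by requiring the Gromov--Witten series to begin in the correct order, i.e. by \eqref{degrees}.

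Everything therefore rests on the second, $\deg u\ne0$ homogeneity of the inexplicit vertex, and this is where I expect the real difficulty. The naive dimensional count is a trap: because $X$ is a threefold the virtual dimension of the capped moduli spaces is independent of the genus, so $u$ superficially carries weight $0$ and one recovers only the first (known) grading. The resolution I would pursue is that the descendent insertions supply the missing weight: the stable-pairs descendent $\tau_{a-1}$ is built from $\ch_{a+1}(\FF)$, whose complex degree exceeds that of its Gromov--Witten partner $\psi^{\,b-1}$ by two, and this mismatch must be carried by the $u$-dependence introduced through the substitution $q=-e^{iu}$. Concretely I would return to the definition of the capped $1$-leg descendent vertex in \cite{PP1,PP2} and track the equivariant weight scaling through the rubber/localisation calculus, aiming to upgrade the leading $(iu)^{-\alpha}$ normalisation of the diagonal to a genuine grading with $w\ne0$ on the whole matrix; a sharper variant is to exploit the fact that along $(t,0,0)$ two weights vanish, so that the surviving fixed-point contributions rigidify and force a single genus, collapsing the $u$-series to one power. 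Should the structural argument resist, the fallback is to specialise Pandharipande--Pixton's explicit length-$1$ formulas to $(t,0,0)$ and prove monomiality of $f_{ab}(u)$ directly as a $\sinh$/Bernoulli generating-function identity, in the spirit of the Appendix (Theorem \ref{appthm}). In every approach the obstacle is the same: one must control the \emph{entire} $u$-series of the inexplicit vertex along the degenerate ray, whereas the Gromov--Witten degree bound and the Kiem--Li/Maulik--Pandharipande formulas (Theorem \ref{GWmain} and the surrounding discussion) only ever pin down its leading term.
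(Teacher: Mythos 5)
The statement you set out to prove is not proved in the paper at all: it is stated as Conjecture \ref{conj1}, and the authors offer only evidence for it --- the diagonal case $a=b$ (where $K_{aa}=i^{1-a}$ by \cite[Theorem 2]{PP1}), the single off-diagonal case $(a,b)=(2,1)$ from \cite[Section 2.5]{PP1}, and the consistency check that, \emph{assuming} the conjecture, the constants $K_{ab}$ are forced by \eqref{degrees} and reproduce the Maulik--Pandharipande formulae (Theorem \ref{MPcor}, via Theorem \ref{appthm} of the Appendix). So there is no proof in the paper to compare yours against; your proposal must stand on its own, and it does not.

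The gap is the one you yourself flag: everything rests on a ``second homogeneity'' of $\widetilde\sfK_{(a),(b)}(s_1,s_2,s_3;u)$ in which $u$ carries a nonzero weight $w$. No such grading is established in your argument, nor is one available in \cite{PP1}. The known grading (homogeneity in the equivariant parameters, with $u$ of weight $0$) is what produces the prefactor $t^{a-b}$; it is precisely because $u$ is weightless in every known constraint that the paper can say the $f_{ab}(u)$ ``could have many terms'' and that \eqref{degrees} does not determine them. A second grading with $w\neq 0$ would, as you observe, instantly force each $f_{ab}$ to be a monomial --- which is to say, positing it is essentially equivalent to positing the conjecture. Your heuristic that the mismatch between $\ch_{a+1}(\FF)$ and $\psi^{b-1}$ ``must be carried by the $u$-dependence'' is not an argument: the correspondence \eqref{PaPixgeneral} already balances cohomological degree through the $t$-powers alone, and the diagonal value $f_{aa}=(iu)^{1-a}$ is consistent with infinitely many non-monomial completions of the off-diagonal entries. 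Your fallback also misstates the situation: there are no ``explicit length-one formulas'' to specialise --- $\widetilde\sfK$ is defined through the capped descendent vertex by an inexplicit procedure (this inexplicitness is the very reason the paper must conjecture rather than compute), so the proposed $\sinh$/Bernoulli identity has no closed-form input to start from. In short, your scaffolding (two gradings force monomiality; then \eqref{degrees} pins down the constants, matching \eqref{formulaL}) is sound as conditional reasoning, but the load-bearing ingredient is missing and is not obtainable by the routes you sketch.
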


If this is true, our results will shortly determine the $K_{ab}$; see \eqref{formulaL} below. There is a small amount of direct evidence for this conjecture. It is known to be true for $a=b$ (with $K_{aa} = i^{1-a}$ \cite[Theorem 2]{PP1}) and for $a=2,\,b=1$ (with $K_{21} = i^{-1}$ \cite[Section 2.5]{PP1}). Our motivation for it is the following.

\begin{theorem} \label{MPcor}
Fix $S$ with a smooth connected canonical divisor and $H_2(S)$ classes $D_1, \ldots,D_m$. Suppose the descendent-MNOP correspondence holds for $X = K_S$.
If Conjecture \ref{conj1} holds for $\widetilde\sfK$, then the vertical contribution to $N\udot_{g,d\k}(S,\tau_{\alpha_1}(D_1)\cdots\tau_{\alpha_m}(D_m))$ equals
$$
(-1)^{\chi(\O_S)} \Bigg( \frac{2^{\frac{d-1}{2}}}{d!} \Bigg)^{\!2-2h\,} \prod_{j=1}^{m} (d \k \cdot \! D_j) \frac{\alpha_j !}{(2\alpha_j +1)!} (-2)^{-\alpha_j} d^{2\alpha_j}.
$$
In particular, Maulik-Pandharipande's formulae for $d=1,2$ are true.
\end{theorem}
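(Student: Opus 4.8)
The plan is to feed Theorem \ref{GWmain} into the leading-order analysis of Lemma \ref{leading}, and then to reduce everything to a single Laurent-coefficient extraction that is governed by the Pixton--Zagier identity (Theorem \ref{appthm}). Since the Poincar\'e duals of $D_1,\dots,D_m$ lie in $H^2(S)$, formula \eqref{dimvirg} gives $g=1+d\k^2+|\alpha|$ and $r=|\alpha|$, so by Lemma \ref{leading} the vertical surface invariant $N\udot_{g,d\k}(S,\tau_{\alpha_1}(D_1)\cdots\tau_{\alpha_m}(D_m))_{\ver}$ is the coefficient of $t^{|\alpha|}u^{2g-2}$ in the vertical three-fold series. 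Theorem \ref{GWmain} factors that series as $\sfZ_{d\k}^{GW}(X)_{\ver}\prod_j(d\k\cdot D_j)$ times a descendent factor, and Corollary \ref{Guncor} tells us the first piece begins at order $u^{d(2h-2)}$ with coefficient $(-1)^{\chi(\O_S)}\big(2^{(d-1)/2}/d!\big)^{2-2h}\prod_j(d\k\cdot D_j)$ and carries no $t$. So I would reduce the whole problem to the leading $u$-behaviour of the descendent factor, handled one index $j$ at a time.

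Writing $a=\alpha_j+1$ and invoking Conjecture \ref{conj1} to set $\widetilde\sfK_{(a),(b)}|_{c_1=t,\,c_2=c_3=0}=t^{a-b}K_{ab}u^{1-a}$, I would first record that the inverse inherits the shape $\widetilde\sfK^{-1}_{(a),(b)}|_{\cdots}=t^{a-b}(K^{-1})_{ab}\,u^{b-1}$, obtained by conjugating the lower-triangular matrix $(K_{ab})$ by the diagonal matrices $\mathrm{diag}(t^{a}u^{1-a})$ and $\mathrm{diag}(t^{-b})$. Substituting into the descendent factor of Theorem \ref{GWmain}, the $t$-powers collapse to exactly $t^{a-1}=t^{\alpha_j}$, so the product over $j$ supplies precisely the required $t^{|\alpha|}=t^{r}$; after the change of variable $x:=du/2$ the remaining $u$-series becomes the $d$-independent expression
\begin{equation*}
G_a(x)\ :=\ \sum_{b=1}^{a}(K^{-1})_{ab}\,\frac{(-ix)^{b-1}}{b!}\,\frac{\sin(bx)}{\sin^b x}.
\end{equation*}
Each summand is even and $O(1)$ as $x\to0$, so the requirement coming from \eqref{degrees} (that the descendent factor start at order $u^{2\alpha_j}$) translates into the vanishing of the $x^0,x^2,\dots,x^{2a-4}$ coefficients of $G_a$.

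These $a-1$ vanishing conditions, together with the known normalisation $K_{aa}=i^{1-a}$ (hence $(K^{-1})_{aa}=i^{a-1}$), form a linear system for $(K^{-1})_{a1},\dots,(K^{-1})_{a,a-1}$. The hard part, and the step I would delegate to Appendix A, is to prove that this system has a unique solution and to evaluate the surviving leading term, i.e.\ the $x^{2a-2}$ coefficient of $G_a$; this is exactly the combinatorial content of Theorem \ref{appthm}, which I expect to yield
\begin{equation*}
[x^{2a-2}]\,G_a(x)\ =\ (-1)^{a-1}\,2^{a-1}\,\frac{(a-1)!}{(2a-1)!}\,.
\end{equation*}
Converting back via $x=du/2$ turns this into the per-$j$ contribution $\tfrac{\alpha_j!}{(2\alpha_j+1)!}(-2)^{-\alpha_j}d^{2\alpha_j}$ to the $u^{2\alpha_j}$ coefficient. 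Multiplying the $m$ descendent factors against the no-insertion leading coefficient from Corollary \ref{Guncor}, and reading off the coefficient of $t^{|\alpha|}u^{2g-2}$, then assembles the stated formula. Finally, since Theorem \ref{GWmain} records that for $d=1,2$ the vertical series \emph{is} the complete three-fold generating function, the same extraction returns the full surface invariants and so confirms the Maulik--Pandharipande formulae. Throughout, the genuinely delicate point is purely combinatorial—uniqueness of the $(K^{-1})_{ab}$ and the closed value of the leading coefficient—which is precisely why Theorem \ref{appthm} is indispensable.
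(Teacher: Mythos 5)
Your proposal is correct and takes essentially the same route as the paper's own proof: both use Lemma \ref{leading}, the factorisation of Theorem \ref{GWmain} together with the leading coefficient from Corollary \ref{Guncor}, and Conjecture \ref{conj1} (equivalently $\widetilde\sfK^{-1}_{(a),(b)}|_{c_1=t,\,c_2=c_3=0}=t^{a-b}L_{ab}u^{b-1}$) to reduce everything to the leading behaviour in $x=du/2$ of $\sum_{b=1}^{a}L_{ab}\frac{(-ix)^{b-1}}{b!}\frac{\sin bx}{\sin^b x}$, and both delegate the uniqueness of the $L_{ab}$ and the value $(-1)^{a-1}/(2a-1)!!$ of the surviving coefficient to Theorem \ref{appthm}. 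Your phrasing of the constraint as an $(a-1)\times(a-1)$ linear system from the vanishing of the $x^0,\dots,x^{2a-4}$ coefficients is just the paper's appeal to the uniqueness statement (via the $d=1$, $m=1$ case of \eqref{degrees}) in different words, and your claimed leading coefficient and final bookkeeping agree with the paper's.
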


\begin{proof}
Note that Conjecture \ref{conj1} is equivalent to
\begin{equation*} \label{L}
\widetilde\sfK_{(a),(b)}^{-1} \Big|_{c_1=t,\,c_2=c_3=0}\ =\ t^{a-b} L_{ab} \, u^{b-1},
\end{equation*}
where $L_{ab} \in \Q[i]$ is the inverse of the infinite matrix $K_{ab}$. Setting $x:= \frac{d u}2$, the left hand side of \eqref{degrees} then becomes $t^{|\alpha|}$ times
$$
\prod_{j=1}^{m} \sum_{b=1}^{\alpha_j+1} \frac{(-i)^{b-1} L_{\alpha_j+1,b}}{b!} \, x^{b - 1} \frac{\sin b x}{\sin^{b} x}\,.
$$
Since
$$
\frac{(-i)^{\alpha_j} L_{\alpha_j+1,\alpha_j+1}}{(\alpha_j+1)!}\ =\ \frac{1}{(\alpha_j+1)!}\,,
$$
we can apply Theorem \ref{appthm} from the Appendix. By the uniqueness statement there, the coefficients $L_{ab}$ are \emph{uniquely} determined by the fact that \eqref{degrees} holds for $d=1$, $m=1$. From the second part of Theorem \ref{appthm} applied to $\alpha=\alpha_j+1$, we can then deduce that for any $d$ and $m$ we have
\begin{align*}
\prod_{j=1}^{m} \sum_{b=1}^{\alpha_j+1} \frac{(-i)^{b-1} L_{\alpha_{j+1},b}}{b!} \, x^{b - 1} \frac{\sin bx}{\sin^{b} x}\ =\ \prod_{j=1}^{m} \Bigg( \frac{(-1)^{\alpha_j}}{(2\alpha_j +1)!!} \, x^{2\alpha_j} + O(x^{2\alpha_j+1}) \Bigg).
\end{align*}
Substituting back $x=\frac{du}2$ gives
$$
\Bigg( \prod_{j=1}^{m} \frac{\alpha_j !}{(2\alpha_j+1)!}(-2)^{-\alpha_j}  d^{2\alpha_j}  \Bigg) u^{2|\alpha|} + O(x^{2|\alpha|+1}).
$$
Multiplying by the leading order term of $Z^{GW}_{d\k}(X)$ from \eqref{ttt} and taking the coefficient of $u^{2g-2}$ gives the required Gromov-Witten invariants of $S$.
\end{proof}

\begin{remark}
The proof of Theorem \ref{appthm} actually gives a formula \eqref{c} for the lower triangular matrix coefficients $\widetilde\sfK_{(a),(b)}^{-1}\big|_{c_1=t,\,c_2=c_3=0} = t^{a-b} L_{ab} u^{b-1}$, namely
\beq{formulaL}
L_{ab}\ =\ i^{b-1} (-1)^{a-1} \sum_{j=1}^{b} (-1)^{b-j} \binom{b}{j} j^{b-a} \quad\mathrm{for}\ a\ge b.
\eeq
By equation \eqref{Pix} of Appendix \ref{appA} we deduce a formula for the generating series of vertical contributions of all descendent Gromov-Witten invariants:
\begin{multline*}
\!\!\sum_{\alpha_1, \ldots, \alpha_m \geq 0} \sfZ_{d\k}^{GW}(X,\tau_{\alpha_1}(D_1) \cdots \tau_{\alpha_m}(D_m))_{\ver}  \,v_1^{\alpha_1}\cdots v_m^{\alpha_m}\ = \\
\sfZ_{d\k}^{GW}(X)_{\ver}\prod_{j=1}^m
\sum_{n \geq 1} \frac{\sin(n \, du/2 ) \, (du/2)^{n-1}}{\sin^{n}(du/2)} \frac{(d\k\cdot D_j)(t v_j)^{n}}{(t v_j)(t v_j+1) \cdots (t v_j+n)}\,,\!\!\!
\end{multline*}
where $v_1, \ldots, v_m$ are formal variables.
\end{remark}


\appendix
\addtocontents{toc}{\SkipTocEntry}
\section{\\ a\;generating function identity \\
\emph{by Aaron Pixton and Don Zagier}} \label{appA}
\addtocontents{toc}{\protect\contentsline{section}%
{\protect\tocsection{Appendix}{\thesection}%
{\hspace{15mm}A generating function identity, \emph{by A. Pixton and D. Zagier}}}%
{\thepage}}

\begin{theorem} \label{appthm}
For each $\alpha \in \Z_{>0}$, there exist unique $\{c_n(\alpha)\}_{n=1}^{\alpha}$ 
with $c_{\alpha}(\alpha) = \frac{(-1)^{\alpha-1}}{\alpha!}$ such that
\begin{equation} \label{order}
\sum_{n=1}^{\alpha} c_n(\alpha)\,\frac{x^n\sin(nx)}{\sin^nx}\ =\ A_\alpha x^{2\alpha-1}+ O(x^{2\alpha+1})
  \qquad\text{as $x\to0$}
\end{equation}
for some $A_\alpha\in\Q$. Moreover, the leading coefficient $A_\alpha$ is then
\begin{equation} \label{key}
A_\alpha \= \frac1{(2\alpha-1)!!} \= \frac1{(2\alpha-1)(2\alpha-3) \cdots1}\,.
\end{equation}
\end{theorem}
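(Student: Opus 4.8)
The plan is to convert the trigonometric sum into a difference of two branches of the tree (Lambert~$W$) function. First I would write $\frac{x^n\sin(nx)}{\sin^nx}=\big(\tfrac{x}{\sin x}\big)^n\sin(nx)$, set $v:=2ix$ and $s=s(v):=\frac{v}{e^v-1}$. A direct computation gives $s(v)=\frac{x}{\sin x}e^{-ix}$ and $s(-v)=\frac{x}{\sin x}e^{ix}$, so that $s(v)^n-s(-v)^n=-\frac{x^n}{\sin^nx}\,2i\sin(nx)$, whence
\[
\frac{x^n\sin(nx)}{\sin^nx}=\frac{i}{2}\big(s(v)^n-s(-v)^n\big).
\]
Writing $P(z)=\sum_{n=1}^\alpha c_n(\alpha)z^n$, the left-hand side of \eqref{order} becomes $\tfrac{i}{2}\big(P(s(v))-P(s(-v))\big)=i\cdot\mathrm{Odd}_v\!\big[P(s(v))\big]$. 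Since $v=2ix$, the theorem is equivalent to the assertion that there is a unique $P$ of degree $\alpha$ with $P(0)=0$ and prescribed leading coefficient for which $P(s(v))$ is even in $v$ modulo $v^{2\alpha-1}$, and that the leading odd coefficient gives $A_\alpha$.

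Next I would produce $P$ explicitly through the identity $\sum_{n\ge1}c_n(\alpha)s^n=T_\alpha(se^{-s})$, where $T_\alpha(\tau):=\sum_{j\ge1}\frac{j^{\,j-\alpha}}{j!}\tau^j$ is the $\alpha$-fold ``$\tau\frac{d}{d\tau}$''-antiderivative of the tree function $T_1(\tau)=\sum_{j\ge1}j^{\,j-1}\tau^j/j!$. Expanding $(se^{-s})^j$ and comparing coefficients gives $c_n(\alpha)=\frac1{n!}\sum_{j=1}^n(-1)^{n-j}\binom nj j^{\,n-\alpha}$; a Stirling-number identity then shows $c_n(\alpha)=0$ for $n>\alpha$ (so $\deg P=\alpha$) and $c_\alpha(\alpha)=\frac{(-1)^{\alpha-1}}{\alpha!}$ as required. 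The point that linearises everything is that $\tau(v):=s(v)e^{-s(v)}$ is an \emph{even} function of $v$: from $s(-v)=s(v)+v$ and $\frac{s(v)+v}{s(v)}=e^v$ one gets $\tau(-v)/\tau(v)=e^ve^{-v}=1$. Thus $s(v)$ and $s(-v)$ are the two preimages of the single point $\tau(v)$ under $s\mapsto se^{-s}$, i.e. the two branches of $T_1$ there, and $P(s(v))-P(s(-v))$ is exactly the branch discontinuity $\operatorname{disc}T_\alpha$ evaluated at $\tau(v)$.

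Finally I would compute this discontinuity. With $u:=1-e\tau$ the tree function has a square-root branch point, $T_1=1-\sqrt{2u}+O(u)$, so $\operatorname{disc}T_1\sim-2\sqrt2\,u^{1/2}$. The operator $\theta=\tau\frac{d}{d\tau}=-(1-u)\frac{d}{du}$ satisfies $\theta T_\alpha=T_{\alpha-1}$; feeding the ansatz $\operatorname{disc}T_\alpha\sim C_\alpha u^{\alpha-1/2}$ into this yields the recursion $C_{\alpha-1}=-(\alpha-\tfrac12)C_\alpha$, hence $C_\alpha=\sqrt2\,(-2)^\alpha/(2\alpha-1)!!$. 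Crucially $\operatorname{disc}T_\alpha$ starts at $u^{\alpha-1/2}$ with no lower half-integer powers; since $u(v)=\tfrac{v^2}{8}+O(v^4)$ is even, this gives $P(s(v))-P(s(-v))=O(v^{2\alpha-1})$, which is precisely the vanishing in \eqref{order}. Substituting $u^{\alpha-1/2}\sim 8^{1/2-\alpha}v^{2\alpha-1}$ and $v=2ix$ into $\tfrac{i}{2}\operatorname{disc}T_\alpha$ and collecting the powers of $2$ and $i$ gives the leading coefficient $A_\alpha=\frac1{(2\alpha-1)!!}$, namely \eqref{key}.

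For uniqueness, the $\alpha-1$ vanishing conditions together with the normalization $c_\alpha(\alpha)=\frac{(-1)^{\alpha-1}}{\alpha!}$ form a square linear system in $c_1,\dots,c_\alpha$, and I would establish its nonsingularity from the leading structure of the system expressed in the Taylor coefficients of $P$ at $z=1$ (the coefficient of $x^{2k+1}$ in $\sum_nc_nf_n$ carries $a_{2k+1}$ with unit coefficient), so the solution is forced. I expect the main obstacle to be twofold: recognising the generating-function identity $\sum_nc_n(\alpha)s^n=T_\alpha(se^{-s})$ together with the evenness of $\tau(v)=se^{-s}$ — the conceptual heart — and then making the Puiseux/branch-cut analysis of $\operatorname{disc}T_\alpha$ rigorous, i.e. genuinely controlling \emph{all} subleading half-integer powers of $u$ rather than only the leading one. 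As a fallback I would keep in reserve a purely combinatorial route that proves \eqref{order}--\eqref{key} directly from the closed form of $c_n(\alpha)$ via an evaluation of the alternating sums $\sum_j(-1)^{n-j}\binom nj j^{\,n-\alpha}$; it is more elementary but less transparent.
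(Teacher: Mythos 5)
Your existence half is correct: your $c_n(\alpha)$ coincide with the paper's \eqref{c}, and the identities you use all check out ($s(-v)=s(v)+v=s(v)e^{v}$, hence $\tau(-v)=\tau(v)$; the coefficient extraction giving $\sum_n c_n(\alpha)s^n=T_\alpha(se^{-s})$; the bookkeeping $C_\alpha=\sqrt2\,(-2)^\alpha/(2\alpha-1)!!$ with $u^{1/2}=ix/\sqrt2$, which indeed returns \eqref{key}). But be aware that, once unwound, this is the paper's proof in different clothing: your relation $\tau\frac{d}{d\tau}T_\alpha=T_{\alpha-1}$ is exactly their recursion $L_\alpha'(y)=(y^{-1}-1)L_{\alpha-1}(y)$ (both encode $nc_n(\alpha)=c_n(\alpha-1)-c_{n-1}(\alpha-1)$), $\tfrac i2\operatorname{disc}T_\alpha(\tau(v))$ is exactly their odd part $O_\alpha(x)$, and your $u=1-e\tau(v)=-\tfrac{x^2}{2}+O(x^4)$ is an even reparametrisation of their variable $x$. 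In particular the issue you flag as the main obstacle---controlling \emph{all} subleading half-integer powers of $u$---is settled by the same induction they run: $\operatorname{disc}T_\alpha$ is odd in $u^{1/2}$, satisfies $\frac{d}{du}\operatorname{disc}T_\alpha=-(1-u)^{-1}\operatorname{disc}T_{\alpha-1}$, and vanishes at $u=0$, so by induction on $\alpha$ it starts exactly at $u^{\alpha-1/2}$; this is rigorous in the ring of convergent Puiseux series, and the branch-cut packaging, while pleasant, buys nothing beyond the paper's elementary route.

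The genuine gap is your uniqueness argument. Write $P(z)=\sum_k a_k(z-1)^k$ and $f_n(x):=x^n\sin(nx)/\sin^n x$. It is true that the coefficient of $x^{2k+1}$ in $\sum_n c_nf_n$ carries $a_{2k+1}$ with an invertible coefficient, but this only shows that the odd-indexed $a_1,a_3,\dots$ are determined, triangularly, by the even-indexed $a_2,a_4,\dots$; the even-indexed coefficients enter the vanishing conditions only through off-diagonal entries whose nonvanishing your structural claim does not supply, so nonsingularity of the square system does not follow and ``the solution is forced'' is unjustified. Concretely, for $\alpha=3$ the homogeneous system ($c_3=0$, coefficients of $x$ and $x^3$ vanishing) first gives $a_1=0=a_3$, and then kills $a_2$ only because the coefficient of $v^3$ in the odd part of $(s(v)-1)^2$ happens to be nonzero (it equals $-1/12$); for general $\alpha$ you would need an unbounded supply of such off-diagonal facts. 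The repair is immediate and is exactly what the paper does: uniqueness is a corollary of existence. For every $\beta\le\alpha$ you have constructed $v_\beta:=\sum_{n\le\beta}c_n(\beta)f_n=A_\beta x^{2\beta-1}+O(x^{2\beta+1})$ with $c_\beta(\beta)=(-1)^{\beta-1}/\beta!\neq0$ and $A_\beta\neq0$. The change of basis from $(f_1,\dots,f_\alpha)$ to $(v_1,\dots,v_\alpha)$ is therefore triangular with nonzero diagonal, so the difference of two putative solutions (which has $d_\alpha=0$ and is $O(x^{2\alpha-1})$) can be written as $\sum_{\beta\le\alpha-1}b_\beta v_\beta$; if some $b_\beta\neq0$, the minimal such $\beta_0$ produces a nonzero coefficient $b_{\beta_0}A_{\beta_0}$ of $x^{2\beta_0-1}$ with $2\beta_0-1\le2\alpha-3$, a contradiction. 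Hence all $d_n=0$, which is the uniqueness statement.
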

\begin{proof}
{\bf Existence.} We show that a solution to \eqref{order} is given by
\begin{equation} \label{c}
c_n(\alpha) \;:=\; \sum_{k=1}^{n} \frac{(-1)^{n-k} k^{n-\alpha}}{k! (n-k)!}\,.
\end{equation}
Notice that these $c_n(\alpha)$ equal the $n$th forward difference $\Delta^{n}\big(x^{n - \alpha} / n!\big)$
 at $x=0$ when $n > \alpha > 0$. Therefore they vanish in this range, and their generating series 
$L_\alpha(y) := \sum_{n=1}^{\infty} c_n(\alpha)\, y^n$ is a {\it polynomial} in $y$. We may 
therefore substitute $y= \frac{x e^{ix}}{\sin x}$ and split into real and imaginary parts, writing
$$
F_{\alpha}(x) \;:=\; L_{\alpha}\Bigl(\frac{x e^{ix}}{\sin x}\Bigr) \= E_{\alpha}(x) \+ i\,O_{\alpha}(x)
$$
where $E_{\alpha}(x) \in \Q[[x^2]]$ is even, while $O_{\alpha}(x) \in x \, \Q[[x^2]]$ 
is odd and equals the left hand side of \eqref{order}.

Splitting $n$ as $k+(n-k)$ in \eqref{c} gives $n c_{n}(\alpha) = c_n(\alpha-1) - c_{n-1}(\alpha-1)$, and so the 
recursive formula $L_\alpha'(y)=(y^{-1}-1)L_{\alpha-1}(y)$. Thus $F'_\alpha(x)=f(x)F_{\alpha-1}(x)$,
where 
$$ f(x)\= \left(\frac{\sin x}{xe^{ix}}\,-\,1\right)\frac d{dx}\!\left(\frac{xe^{ix}}{\sin x}\right) 
  \= x \+ \frac1x\,\left(1-\frac x{\tan x}\right)^2 \;\in\; x \+ x^3\,\Q[[x^2]]\,. $$
Taking even parts of this equation, we get
$$
O_{\alpha}^{\prime}(x) \,=\, f(x)\,O_{\alpha-1}(x)\,.
$$
Equation \eqref{order} with $A_\alpha$ as in equation~\eqref{key} now follows by induction on $\alpha$. \smallskip

\medskip  
\noindent {\bf Uniqueness.} The vector space $V_d=x\Q[[x^2]]/x^{2d+1}\Q[[x^2]]$ is $d$-dimensional
for every $d\ge1$, with basis $\{e_i=x^{2i-1}+x^{2d+1}\Q[[x^2]]\}_{1\le i\le d}$. Let $v_\alpha$ ($1\le\alpha\le d$)
be the image in $V_d$ of the left hand side of~\eqref{order} with coefficients given by~\eqref{c}. The
first part of the proof writes $v_\alpha$ as a linear combination of $e_\alpha,\dots,e_d$ with the
coefficient $A_\alpha$ of $e_\alpha$ being non-zero (and given by~\eqref{key}). It follows immediately
that these vectors are linearly independent and that no combination of the first~$\alpha$ of them
can be $\,\text O(x^{2\alpha+1})$, which is the desired uniqueness statement.
\end{proof}

\begin{remark}
For the application to Gromov-Witten theory the following formula for the generating series of the left hand 
sides of \eqref{order} is useful:
\begin{equation} \label{Pix}
\ \ \sum_{\alpha=1}^{\infty} (-v)^{\alpha-1} \sum_{n=1}^{\alpha} c_{n}(\alpha) \frac{x^n\sin(nx)}{\sin^nx}
  \ =\ \sum_{n=1}^{\infty}\frac{x^n\sin(nx)}{\sin^nx} \, \frac{v^n}{v(v+1) \cdots (v+n)}\,. 
\end{equation}
To prove it, we use partial fractions and geometric series expansions to get
$$  \frac1{(v+1)\cdots(v+n)} \= \sum_{k=1}^n\frac{(-1)^{k-1}}{(k-1)!(n-k)!}\,\frac1{v+k} 
  \= \sum_{\alpha=n}^\infty (-1)^{\alpha-1}\,c_n(\alpha)\,v^{\alpha-n}\,.$$
\end{remark}

\newpage

\bigskip\noindent{\tt{m.kool1@uu.nl \\ richard.thomas@imperial.ac.uk}}
\end{document}